\subjclass{ 37C30, 37D20,  37C40,  37E10,   37F50,  30H25, 37D35,   37A50, 37E05,     60F05 } 
\keywords{Brjuno function, Weierstrass function, Takagi function, Central Limit Theorem, expanding maps, thermodynamical formalism, cohomological equation, Livsic equation}
\title[Brjuno-Like Functions]{Brjuno-Like Functions for nonlinear expanding maps: \\ Fractional Derivatives and Regularity Dichotomies}
\author[Marmi and Smania]{S. Marmi and  D. Smania}
\address{Scuola Normale Superiore \\  Piazza dei Cavalieri, 7 \\  56126 Pisa PI, Italy} 
\email{stefano.marmi@sns.it}
\urladdr{\href{https://homepage.sns.it/marmi/}{https://homepage.sns.it/marmi/}}
\address{Departamento de Matem\'atica \\
   ICMC/USP \\
               Avenida Trabalhador S\~ao-carlense, 400 \\ 
CEP: 13566-590 - S\~ao Carlos - SP \\ Brazil.}
\email{smania@icmc.usp.br} \urladdr{\href{http://www.icmc.usp.br/pessoas/smania/}{http://www.icmc.usp.br/pessoas/smania/}}
\thanks{D.S. was partially supported  by the São Paulo Research Foundation (FAPESP), Brasil,
Process Number 2017/06463-3, Bolsa de Produtividade em Pesquisa CNPq-Brazil 311916/2023-
6. S. M and D.S. were partially supported by the   project  “Dynamics and Information
Research Institute - Quantum Information (Teoria dell'Informazione), Quantum Technologies” within the agreement between UniCredit Bank and Scuola Normale Superiore.}
\newtheorem{theorem}{Theorem}[section]
\newtheorem{mainthm}{Theorem}
\newtheorem{lemma}[theorem]{Lemma}
\newtheorem{proposition}[theorem]{Proposition}
\theoremstyle{definition}
\newtheorem{remark}[theorem]{Remark}
\newcommand{\secdot}[1]{\arabic{#1}}
\newcommand{\Cll}[2][normal]{{\color{blue} \hypertarget{#2}{\Cl[#1]{#2}} }}
\newcommand{\Crr}[1]{\hyperlink{#1}{\Cr{#1}}}
\newcounter{change}
\newcommand{\change}[2][]{\refstepcounter{change} \todo[linecolor=blue,backgroundcolor=blue!25,bordercolor=blue,size=tiny,#1]{Note \thechange - #2}}
\providecommand\@dotsep{5}
\renewcommand{\listoftodos}[1][\@todonotes@todolistname]{%
  \@starttoc{tdo}{#1}}
\begin{document}

\begin{abstract} Cohomological equations appear frequently in dynamical systems.
One of the most classical examples is the Liv\v{s}ic equation
\[
v(x) = \alpha \circ F(x) - \alpha(x).
\]

The existence and regularity of its solutions $\alpha$ is well understood when $F$ is a hyperbolic dynamical system (for instance an expanding map of the circle) and $v$ is a H\"older function.

The \emph{twisted cohomological equation}
\[
v(x) = \alpha \circ F(x) - (DF(x))^\beta \, \alpha(x)
\]
is much less well understood. Functions similar to the famous Brjuno, Weierstrass, and Takagi functions
appear as solutions of this equation.
This functional equation also appears in the work of M.~Lyubich, and of
 Avila,  Lyubich, and  de~Melo in their study of deformations of
quadratic-like and real-analytic maps.

Nevertheless, there are some striking results concerning the (lack of)
regularity of solutions $\alpha$ when $F$ is a linear endomorphism of the circle and $v$
is very regular. Notable contributions include works by Berry and Lewis;
Ledrappier; Przytycki and Urba\'nski, and more recently by Bara\'nski,
B\'ar\'any and Romanowska, as well as by Shen, and by Ren and Shen, on
Takagi and Weierstrass (and Weierstrass-like) functions.

We study the regularity of solutions $\alpha$ when $F$ is a
\emph{nonlinear} expanding map of the circle and $v$ is not differentiable or even continuous,
a setting in which previously used transversality techniques do not appear to be applicable.
The new approach uses fractional derivatives to \emph{reduce}
the study of the twisted cohomological equation to that of a corresponding
Liv\v{s}ic cohomological equation, and to show that the resulting distributional
solutions (in the sense of Schwartz) satisfy certain Central Limit Theorem. 

  \end{abstract}

\maketitle

\makeatletter
\renewcommand{\l@section}{%
  \@tocline{1}{0pt}{0pt}{0em}{}%
}
\makeatother

\setcounter{tocdepth}{3}

\newpage
\tableofcontents

\section{Introduction}

\subsection{Twisted  and Lvisic Cohomological equations }

Cohomological equations are widerspread in dynamical systems.  The classical {\it Lvisic equation }
\begin{equation}\label{lvisic} v= \alpha\circ F - \alpha\end{equation} 
appears quite early in the study of Anosov diffeomorphisms and expanding maps \cite{L1972}.  Indeed we can see   the Liv\v{s}ic equation Liv\v{s}ic as  a particular case  the    {\it twisted cohomological equation} 
\begin{equation}\label{tce} v(x) =\alpha(F(x))  - DF(x)^\beta \alpha(x),\end{equation} 
where $v$ is a piecewise smooth function and $F$ is a piecewise smooth  map acting on either an interval or $\mathbb{S}^1$, and $\beta \in  \mathbb{R}.$ We describe below  many  instances  in the literature where such equation appears. One can ask about  the {\it existence, uniqueness and regularity of the solutions $\alpha$.}  We are going to see that this kind of problem appears in many branches of mathematics, in particular  when $F$ is a (piecewise) expanding Markovian map.  We call $\alpha$ a {\it Brjuno-like function}  due its similarity with  the Brjuno function. Shevchenko and Yampolsky \cite{sy} used the same  term for a similar class of functions. 

Weierstrass functions $W_{a,b}$, with $b \in \mathbb{N}\setminus \{0,1\}$ and $ab\geq 1$  and the Takagi function can be obtained in that way taking $$F(x)= bx \ mod \ 1.$$
Indeed if   $F$ is a linear endomorphism of the circle there is a  very long and exciting literature.

A common characteristic of these equations is the presence of a {\it dichotomy} in the regularity of the solutions $\alpha$: either the solution is highly irregular (or may not even exist), or it is very smooth. 

 It turns out that in many settings, even when $v$ is {\it very regular}, for a {\it typical} choice of $v$ the solution $\alpha$  has a very {\it wild} behaviour, usually manifested in poor differentiability properties, graphs with Hausdorff dimension larger than one, and related phenomena. A striking recent example is provided by Ren, and Shen~\cite{MR4337976} on Weierstrass-type functions.

There are far fewer results for {\it nonlinear} maps $F$, which will be the main focus of this work.

 The case $\beta=1$ is of particular interest in the study of deformations of dynamical systems. The study of the solutions of this equation plays a proeminent part on the works of Lyubich \cite{lyubich} and Avila, Lyubich and de Melo \cite{alm} on the dynamics of quadratic-like maps and real-analytic maps, as well as in the study of deformations of piecewise expanding maps on the interval by Ragazzo and S. \cite{rs22}. 
 
 Todorov \cite{todorov} studied the regularity of the solutions of (\ref{tce}) using both methods of de Lima and S. \cite{ls} and Ledrappier \cite{ledrappier2}.  See also Otani \cite{otani}

If $F$ is an $C^{2+\epsilon}$ expanding map on the circle and $v \in C^{1+\epsilon}$,    de Lima and S. \cite{ls} made an explicit connection between the regularity of the solution $\alpha$ for the twisted cohomological equation (\ref{tce}) when $\beta=1$ and   the  Lvisic equation (\ref{lvisic}). The equation 
\begin{equation}\label{def}  \frac{Dv+D^2F\circ \alpha}{DF} = \phi\circ F -\phi \end{equation} 
has a measurable solution $\phi$  if and only if  $\alpha$ \change{$v\mapsto \alpha$}is $C^{1+\epsilon}$. Furthermore such solution $\phi$  does not exist {\it if and only if}  $\alpha$ is nowhere differentiable. This gives an explicit connection between the twisted cohomological equation and the Lvisic cohomological  equation, as well as a nice  and simple characterization of the dichotomy  for the case $\beta=1$ even when  $F$ is nonlinear. 

Our goal is to provide a framework for the dichotomy phenomenon for solutions of the twisted cohomological equation that not only encompasses several known examples, but also includes new manifestations of the same phenomenon, such as cases in which $F$ is expanding but non-linear    $v$ has singularities.

In particular, our methods rely on a new  deep connection between the twisted cohomological equation and the Livšic equation through {\it fractional derivatives}, as well as on the ergodic properties of $F$. 
As in~\cite{ls}, the asymptotic variance and the Central Limit Theorem for certain observables plays a central role in the characterization of the dichotomy.

\subsection{Weierstrass, Takagi function and friends}

Perhaps surprisingly, the first examples of H\"older-continuous functions with wild behavior are {\it explicitly constructed}. In 1872, Weierstrass introduced a remarkable family of functions that are continuous everywhere but differentiable nowhere:
\begin{equation}\label{wei}
W_{a,b}(x) = \sum_{k=0}^{\infty} a^k \cos(b^k \pi x).
\end{equation}
Here, $a$ is a real number with $0 < a < 1$, and $b$ is an odd integer. Weierstrass assumed that the  parameters must satisfy the condition $ab > 1 + \frac{3\pi}{2}$. Later, in 1916, G. H. Hardy \cite{hardy} proved that the function $W$ is continuous and nowhere differentiable whenever $0 < a < 1$ and $ab \ge 1$, with no requirement for $b$ to be an integer. Furthermore, when $ab > 1$, the function $W_{a,b}$ is H\"older continuous with exponent $-\log a / \log b$.

Since the introduction of the Weierstrass function, many other contributions have enriched this area. In 1903, T. Takagi \cite{takagi} (see also Allaart and Kawamura \cite{alaart}) presented another example of a continuous function that is nowhere differentiable, now known as the {\it Takagi function},  defined by
$$
T(x) = \sum_{k=0}^{\infty} \frac{1}{2^k} \inf_{m \in \mathbb{Z}} |2^k x - m|.
$$

The question of whether the Hausdorff dimension of the graph of the classical Weierstrass function is equal to $2 + \frac{\log a}{\log b}$ , posed by B. Mandelbrot,  has captivated many mathematicians. For  an integer  $b\geq 2$  and $1/b<a<1$ this is question (and a similar one for  Takagi-like functions) are connected with the dynamics of  $f_b(x) = bx mod 1$ and skew-products  associated  to it by  works of  Przytycki and Urba{\'n}ski \cite{fu} and Ledrappier \cite{ledrappier2},  that employs  deep  developments in the geometry of SRB measures by Ledrappier and Young \cite{ly}.

  Significant progress was made over the past recent decades. Notably, Barański, Bárány, and Balázs \cite{MR3255455}  introduced new methods  that   settle the question for a large subset of parameters.   A  final  positive answer was given for all parameters  by Shen \cite{MR3803788}. In particular the work of  Tsujii \cite{tsujii} also played a important role.  See also Ren and Shen \cite{MR4337976} for further developments.

Another way to see the irregularity of these functions is that their modulus of continuity is somehow {\it random}. See the results by  Gamkrelidze \cite{gamk1}\cite{gamk2} for $W_{b,b}$ and $T$, that was later extended for a large class of functions and non-linear expanding maps  by  de Lima and S.  \cite{ls}. 

\subsection{The Brjuno function}
The \emph{Brjuno function} is  a cocycle
 \cite{MMY2} under the action of the modular group $\mathrm{PGL}_2(\mathbb{Z})$ action on $\mathbb{R}\setminus\mathbb{Q}$ , see \cite[Appendix 5]{MMY3} as well as the solution of a twisted cohomological equation associated to the  \emph{Gauss map} $G:(0,1]\to [0,1]$,  $G(x) = \{1/x\}$, where $\{t\}$ is the fractional part of $t$. 
 
Let $x_n = G^n(\{x\})$, $\beta_{n}(x) = x_0x_{1}\cdots x_n$, $n\ge 0$, and $\beta_{-1}\equiv1$.
J.-C. Yoccoz \cite{yoccoz1995petits} defined
the \emph{Brjuno function} $B:\mathbb{R}\setminus\mathbb{Q} \to \mathbb{R}\cup\{\infty\}$  by
\begin{equation*}\label{eq:def Brjuno}
B(x) = - \sum_{n=0}^\infty \beta_{n-1}(x)\log{x_n},
\end{equation*}
The set of Brjuno numbers can be characterized as the set where $B(x)<\infty$. 
Indeed, there is a positive constant $C$ such that
\begin{equation}\label{eq:B-logq/q}
\left|B(x)-\sum\limits_{n=0}^{\infty}\dfrac{\log (Q_{n+1}(x))}{Q_n(x)}\right|<C.
\end{equation}

Brjuno numbers and the Brjuno function play a very important role in one-dimensional small divisor problems 
\cite{yoccoz1995petits, EKMY2002}: the former exactly characterize the local conjugacy classes in linearization problems, the latter even provides a deep insight on the (discontinuous) dependence on the rotation number of the size of the rotational domains, interpolating the radius of convergence of the linearization up to a continuous \cite{BC2006}, possibly $\frac{1}{2}$-H\"older continuous \cite{CC2015, MMY1} correction.

Yoccoz proved that
$$
B(x) = -\log x + x B\circ G(x) \; , \;\;\; x\in (0,1)\setminus\mathbb{Q}\; , \;\;\; B(x)=B(x+1) 
$$
thus
the set of Brjuno numbers is invariant under the modular group $\mathrm{PSL}_{2}(\mathbb{Z})$.

The above functional equation can also be interpreted as a {\it twisted} cohomological equation 
\begin{equation} \label{brjuno}  \frac{\log x}{x} = B\circ G(x) - |DG|^{1/2} B(x) ,\end{equation} 
associated to the Gauss map, where the twist factor is not constant but given by the multiplication by $x$. In \cite{MMY1} several results have been proven concerning the regularity of the (coboundary) operator $(Tf)(x)= x(f\circ G)(x)$ acting on various spaces of periodic functions: $L^p$, BMO (Bounded Mean Oscillation) and H\"older. It turns out that $T$ is a contraction in the first two cases, whereas in the H\"older case its behaviour changes according to whether  the exponent is greater or smaller than $\frac{1}{2}$. Similar results have been obtained for other representatives of Brjuno function obtained by replacing the Gauss map with other continued fraction maps with full branches, see \cite{MMY1, LM, BLM}. The Brjuno function and other generalizations play an important role also in some problems in analytic number theory (see, e.g., \cite{RR2013, M2017, P2017}), including the Nyman-Beurling criterion for the Riemann Hypothesis (see \cite{D2020} and references therein). The Brjuno function itself is ''close to being a quantum modular form of weight $-1$'' (\cite{BD2022}, p.3).

\subsection{Random Hölder Functions} Weierstrass and Takagi functions are defined in a rather algorithmic fashion; however, it is remarkable that they capture (in an appropriate sense) the \emph{typical} behaviour of H\"older functions.
Besicovitch and Ursell \cite{ursell} proved that the Hausdorff dimension of the graph of a $\beta$-H\"older continuous function is at most $2-\beta$ and that  this  upper bound  is attained for certain   functions similar to Weierstrass functions.  Instigated by early investigations by Mandelbrot \cite{mandelbrot} and  Berry and  Lewis \cite{bl} there are results on the sharpness of this upper bound  for randon Weierstrass functions  by  Hunt \cite{hunt}) and   for random wavelet representations of functions  by Roueff \cite{roueff}. 

Indeed {\it most}  Hölder functions on an interval \( I \)  exhibit highly irregular behavior  in the {\it measure-theoretical sense}. In fact,  Clausel and Nicolay \cite{cn}   (See also \cite{cn2}) proved that for a {\it prevalent set}  of  functions \( \alpha \) in the space of \( \beta \)-Hölder continuous functions \( C^\beta(I) \), with \( \beta \in (0,1) \), the following properties hold 

\begin{itemize}
    \item The function \( \alpha \) is nowhere differentiable. More precisely, for every \( x \in I \)
    \[
    \lim_{\delta \rightarrow 0^+} \sup_{0 < |y - x| < \delta} \frac{|\alpha(y) - \alpha(x)|}{\delta^\beta} > 0.
    \]
    \item The Hausdorff dimension of the graph of \( \alpha \) is equal to \( 2 - \beta \).
\end{itemize}

\section{Main results}

\subsection{Dichotomy for the regularity of  Brujno-like functions} 

Given an $C^1$ expanding map  of the circle $F\colon \mathbb{S}^1 \rightarrow \mathbb{S}^1$,  a continuous function $v\colon \mathbb{S}^1\rightarrow \mathbb{R}$ and $\beta\in (0,1]$ let $ \alpha= \alpha_{\beta, v}$ be the unique bounded function that is the solution of  (\ref{tce}).

\begin{mainthm}[Dichotomy  of Smoothness for H\"older data] \label{mainex}  Let $F\colon \mathbb{S}^1 \rightarrow \mathbb{S}^1$  be a $C^{1+\gamma}$   expanding map on the circle with degree $2$ and $\gamma\in (0,1)$.  Let $C^k(\mathbb{S}^1)$, with $k\geq \gamma$ be  the Banach space of real-valued  $C^k$ functions.  For $\beta\in (0,\gamma)$ let 
$$\Omega_\beta = \{ v\in C^k(\mathbb{S}^1) \colon  \alpha_{\beta, v}  \not\in  \bigcup_{\delta> 0} C^{\beta+\delta}(\mathbb{S}^1)\}.$$
  Then $\alpha_{\beta,v} \in C^\beta(\mathbb{S}^1)$ for every $v\in C^k(\mathbb{S}^1)$. Moreover 
\begin{itemize} 
\item[A.] The set $\Omega_\beta$ is an  open and dense  subset of $C^k(\mathbb{S}^1)$.
\item[B.]  For a residual set of functions
$$\eta\in C^s([0,1]^d, C^k(\mathbb{S}^1)$$
where $s\in \mathbb{N}$ and $d\geq 1$  we have that 
$$\{ t\in [0,1]^d\colon\eta(t)\in \Omega_\beta \}.$$
has full Lebesgue measure on $[0,1]^d$.
\item[C.]  For a real analytic family $t\in (a,b)\mapsto v_t\in C^k(\mathbb{S}^1)$ we have that 
$$\{t\in (a,b)\colon v_t\not\in \Omega_\beta\}$$
is   either $(a,b)$, or  it is made of isolated points (and in particular is countable).
\item[D.] Let $v\in C^k(\mathbb{S}^1)$. Let 
$$\Lambda = \{ \beta\in (0,\gamma)\colon  v \not\in \Omega_\beta\}.$$
Then either $\Lambda$ contains only  isolated points (in particular it  is countable), and $0$ is not an accumulation point of $\Lambda$,  or $\Lambda=(0,\gamma) $.  
\item[E.] If $v\not\in \Omega_\beta$ then $\alpha_{\beta,v}\in C^{\gamma}(\mathbb{S}^1)$. 
\item[F.] If $v\in \Omega_\beta$ then $\alpha_{\beta,v}$ is nowhere differentiable, satisfies  the $\beta$-anti-H\"older property  and 
$$HD(\{ (x,\alpha_{\beta,v}(x))\colon x\in \mathbb{S}^1   \})> 1.$$
where $HD$ denotes the Hausdorff dimension.
\end{itemize} 
\end{mainthm}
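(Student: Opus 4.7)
My plan is to exploit the new bridge between the twisted equation~(\ref{tce}) and the Liv\v{s}ic equation~(\ref{lvisic}) announced in the introduction: a suitable fractional derivative of order $\beta$ turns the multiplicative cocycle $(DF)^\beta$ into an additive coboundary obstruction. Concretely, I expect the whole theorem to reduce to the analysis of a single scalar invariant $\Sigma(v,\beta)\geq 0$, the asymptotic variance (with respect to the $F$-invariant equilibrium state for the geometric potential of exponent $\beta$) of a H\"older observable $\widetilde v$ canonically built from $v$, $F$, and $\beta$. The dichotomy $\Sigma=0$ versus $\Sigma>0$ will drive all six items.

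First I would establish the $C^\beta$ regularity of $\alpha_{\beta,v}$ by iterating~(\ref{tce}), giving the convergent series
\[
\alpha_{\beta,v}(x) \;=\; -\sum_{n\ge 0} v(F^n x)\prod_{j=0}^{n}\bigl(DF(F^j x)\bigr)^{-\beta};
\]
bounded distortion together with $\beta<\gamma$ yields uniform H\"older bounds. Next, I apply a Marchaud-type fractional derivative $D^\beta$ to both sides of~(\ref{tce}) and invoke a fractional Leibniz/chain rule, hoping for an identity of the form
\[
\frac{D^\beta v}{(DF)^\beta} \;=\; \widetilde\alpha\circ F - \widetilde\alpha + \frac{R}{(DF)^\beta}, \qquad \widetilde\alpha := D^\beta\alpha_{\beta,v},
\]
in which the remainder $R$ lies in a H\"older class strictly better than $\widetilde\alpha$. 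Thus $\widetilde\alpha$ satisfies a Liv\v{s}ic cohomological equation with H\"older data, to which the classical theory for $C^{1+\gamma}$ expanding maps applies, and $\Sigma(v,\beta)$ becomes the asymptotic variance of the right-hand observable.

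For items (A)--(D) the key point is that $\Sigma(v,\beta)$ depends continuously on $v$ in the $C^k$ topology and real-analytically on $v$ and $\beta$ when restricted to real-analytic inputs. Openness of $\Omega_\beta$ follows from continuity of $\Sigma$, and density from explicit small perturbations whose Birkhoff sums along a chosen periodic orbit break the coboundary condition, giving~(A). Item~(C) is then immediate from real analyticity of $t\mapsto \Sigma(v_t,\beta)$, whose zeros are isolated unless identically zero, and~(D) follows analogously from real analyticity of $\beta\mapsto \Sigma(v,\beta)$; the claim that $0$ is not an accumulation point of $\Lambda$ reflects the fact that as $\beta\to 0^+$ the equation degenerates to the classical Liv\v{s}ic equation, whose solvability is strictly weaker. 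Item~(B) is a Kuratowski--Ulam/Fubini argument: for a residual $\eta\in C^s([0,1]^d,C^k(\mathbb{S}^1))$, the slice $\{t:\eta(t)\notin\Omega_\beta\}$ is the zero set of a real-analytic function of $t$, hence has full complement of full Lebesgue measure.

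In the coboundary case $v\notin\Omega_\beta$ (item~(E)), the Liv\v{s}ic solution $\widetilde\alpha$ is itself H\"older of exponent $\gamma-\beta$, and inverting $D^\beta$ upgrades $\alpha_{\beta,v}$ to $C^\gamma$. In the opposite case $v\in\Omega_\beta$ (item~(F)), the Central Limit Theorem for H\"older observables of $C^{1+\gamma}$ expanding maps applied to $S_n\widetilde v$ yields Gaussian fluctuations of order $\sigma\sqrt{n}$, which after back-substitution into the telescoping formula for $\alpha_{\beta,v}(y)-\alpha_{\beta,v}(x)$ produces a $\beta$-anti-H\"older lower bound of order $|y-x|^\beta\sqrt{\log\log(1/|y-x|)}$; this simultaneously rules out differentiability and forces $HD$ of the graph strictly above~$1$ via a direct oscillation/box-counting count on preimages of the Markov partition. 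The main obstacle I anticipate is precisely the fractional Leibniz/chain rule needed in step two: one must produce a remainder $R$ quantifiably smoother than $\widetilde\alpha$ despite $(DF)^\beta$ being only $C^\gamma$ and $v$ being allowed to approach the threshold regularity. Once this harmonic-analytic estimate is secured, the thermodynamic-formalism and CLT machinery for $C^{1+\gamma}$ expanding maps handles all remaining bookkeeping.
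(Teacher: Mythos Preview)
Your overall strategy is the one the paper follows: a fractional derivative $D^\beta$ reduces the twisted equation to a Liv\v{s}ic equation for $D^\beta\alpha$, and the asymptotic variance $\sigma^2(\phi_v)$ of the resulting H\"older observable---taken with respect to the SRB measure $\rho\,dm$, not an equilibrium state of exponent $\beta$---serves as the single dichotomy invariant. Real analyticity of $v\mapsto\sigma^2$ and $\beta\mapsto\sigma^2$ then drives (C) and the first part of (D), and the Liv\v{s}ic regularity theory gives (E). Two implementation points differ from your sketch: the paper builds $D^\beta$ from the unbalanced Haar basis of the Markov partition rather than a Marchaud singular integral, which is what makes the chain/Leibniz remainders manifestly land in a better Besov class; and for (F) the paper does not go through the CLT or LIL, since once $\sigma^2>0$ the negation of the condition $\liminf_n\min_{P\in\mathcal P^n}|P|^{-\beta}\mathrm{osc}_P(\alpha)=0$ already gives the $\beta$-anti-H\"older property directly, and $HD>1$ is then quoted from Przytycki--Urba\'nski.

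There are, however, two genuine gaps. For (B) you assert that for residual $\eta\in C^s([0,1]^d,C^k)$ the map $t\mapsto\Sigma(\eta(t),\beta)$ is real-analytic; but $\eta$ is merely $C^s$, so this composition is only $C^s$ and its zero set may well have positive measure. The paper instead runs a Baire argument: the set of families with small bad-parameter measure is open, and density is obtained by perturbing an arbitrary $\eta$ to $\hat\eta(t,u)=\eta(t)+uw$ with $\sigma^2(\phi_w)>0$; along each fiber $u\mapsto\hat\eta(t,u)$ the variance \emph{is} real-analytic, so (C) plus Fubini produce $u_n\to0$ for which the perturbed family has a null bad set. For the non-accumulation of $\Lambda$ at $0$ in (D), saying that the $\beta\to0$ limit ``degenerates to Liv\v{s}ic'' is not enough: the bounded solution $\alpha_{\beta,v}$ need not extend across $\beta=0$, so $\beta\mapsto\sigma^2$ is a priori only analytic on $(0,\gamma)$. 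The paper's fix is to replace $v$ by a specific linear combination $v_\beta$ of $v$ and $1-g^\beta$, normalized to pair to zero against the leading eigenvector of $L_{1+\beta}$; this yields a distributional solution that extends analytically through $\beta=0$ (Theorem~\ref{dic333}.C), after which the zero set of the variance is either all of a neighborhood of $0$ or bounded away from it.
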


\begin{remark} The dichotomy $D$, together with  $E$-$F$,   was  motivated by the  the striking dichotomy result by Ren and Shen \cite{MR4337976} when $F = \ell x \ mod  1$  and $v$ is real-analytic on the circle. One of the main novelties of Theorem \ref{mainex} is that $F$ can be genuinely {\it nonlinear}; more precisely, $F$ need not be smoothly conjugate to $\ell x \bmod 1$. This is a new result even when $F$ and $v$ are real-analytic.  
\end{remark} 

\begin{remark}
We note that previous arguments based on transversality conditions, as
in Ledrappier \cite{ledrappier2},  Barański, Bárány, and Balázs \cite{MR3255455}, Ren and Shen \cite{MR4337976} and
  other authors, cannot be applied here, since $v$ may fail to be
differentiable (it can in fact be nowhere differentiable), and those
transversality conditions involve the derivative of $v$.
\end{remark}

\begin{remark}
The assumption that $F$ has degree $2$ is not really necessary; however, it makes the notation significantly simpler.
\end{remark}

The following result give us a more precise  statement when $f$ are $v$ are  more regular. 

\begin{mainthm}[Dichotomy  of Smoothness for $C^{1+\gamma}$ data] \label{mainex5}  Let $F\colon \mathbb{S}^1 \rightarrow \mathbb{S}^1$  be a $C^{2+\gamma}$   expanding map on the circle with degree $2$ and $\gamma\in (0,1)$.  Let $v\in C^{1+\gamma}(\mathbb{S}^1)$ be a real-valued function and 
$$\Lambda = \{ \beta\in (0,1)\colon  v \not\in \Omega_\beta\}.$$

Then either $\Lambda=(0,1)$ or $\Lambda$  is finite. Moreover for every $\beta \in   \Lambda$ we have that $\alpha_{v,\beta}$ is $C^{1+\gamma}.$

Additionally if   $$\frac{Dv+D^2F\cdot \alpha_{v,1}}{DF}$$
is  not cohomologous to a constant then    $\Lambda$ is finite.  
\end{mainthm}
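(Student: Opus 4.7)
The theorem has three assertions: (i) every $\beta\in\Lambda$ yields $\alpha_{v,\beta}\in C^{1+\gamma}$; (ii) $\Lambda=(0,1)$ or $\Lambda$ is finite; (iii) under the hypothesis on $w := (Dv+D^2F\cdot\alpha_{v,1})/DF$, $\Lambda$ is finite. I would attack them in that order.

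For (i), suppose $\beta\in\Lambda$, so $\alpha := \alpha_{v,\beta}\in C^{\beta+\delta}$ for some $\delta>0$. Formal differentiation of the twisted cohomological equation yields, with $W=D\alpha$,
\[
W\circ F - (DF)^{\beta-1}W \;=\; \frac{Dv+\beta (DF)^{\beta-1}D^2F\cdot\alpha}{DF}.
\]
Since $F\in C^{2+\gamma}$ is expanding and $\beta-1\in(-1,0)$, the transfer operator attached to this equation is a strict contraction on H\"older spaces, so the equation has a unique $C^\gamma$ solution $W$. A standard difference-quotient argument, using the a priori H\"older regularity of $\alpha$, identifies $W$ with the actual derivative of $\alpha$. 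Hence $\alpha_{v,\beta}\in C^{1+\gamma}$.

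For (ii), I exploit real-analyticity: $L_\beta\alpha := \alpha\circ F-(DF)^\beta\alpha$ depends real-analytically on $\beta$ and is boundedly invertible on $C^0(\mathbb{S}^1)$ for all $\beta\in(0,1)$, so $\beta\mapsto\alpha_{v,\beta}$ is a real-analytic map into $C^0$. The same holds for $\beta\mapsto W_\beta\in C^\gamma$, the H\"older solution of the derived equation above with $\alpha=\alpha_{v,\beta}$. Setting
\[
\Phi(\beta,x) \;:=\; \alpha_{v,\beta}(x)-\alpha_{v,\beta}(0)-\int_0^x W_\beta(t)\,dt,
\]
we have $\Phi(\beta,\cdot)\equiv 0$ iff $\beta\in\Lambda$. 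Since $\Phi$ is real-analytic in $\beta$, $\Lambda$ is either all of $(0,1)$ or has no accumulation point in $(0,1)$. To upgrade ``discrete'' to ``finite'' I rule out accumulation at the endpoints. Accumulation at $0$ is excluded by Theorem~\ref{mainex}~(D) applied with $k=1+\gamma$: either $\Lambda\cap(0,\gamma)=(0,\gamma)$, which by analyticity forces $\Lambda=(0,1)$, or $\Lambda\cap(0,\gamma)$ is discrete without accumulation at $0$.

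For accumulation at $1$, take $\beta_n\in\Lambda$ with $\beta_n\to 1^-$. The derived equation for $W_{\beta_n}$ formally limits to the Liv\v{s}ic equation $W\circ F-W=w$. As the transfer operator's spectral radius tends to $1$ and its resolvent blows up, a suitable rescaling of $W_{\beta_n}$ extracts a limiting identity $\tilde W\circ F-\tilde W=w-c$ for some constant $c$, so $w$ is cohomologous to a constant. Hence under the hypothesis of (iii) -- that $w$ is not cohomologous to any constant -- $\Lambda$ cannot accumulate at $1$; combined with the absence of accumulation at $0$, $\Lambda$ is discrete in $(0,1)$ with no endpoint accumulation, hence finite, proving (iii). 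Unconditionally, if $\Lambda$ accumulates at $1$, real-analyticity combined with this limit forces $\Lambda$ to contain a one-sided neighbourhood of $1$ and hence $\Lambda=(0,1)$, completing (ii). The main obstacle is precisely this $\beta\to 1^-$ limit: the spectral gap of the transfer operator closes, and the surviving constant $c$ turns out to be (up to normalisation) the SRB mean $\int w\,d\mu$, explaining why the hypothesis is framed as ``cohomologous to a constant'' rather than as a pure coboundary, and why the Central Limit Theorem ingredients of the paper enter precisely here.
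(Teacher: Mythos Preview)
Your approach contains a genuine gap that undermines all three parts. In (i) you assert that the derived equation
\[
W\circ F - (DF)^{\beta-1}W \;=\; u
\]
with twist $\beta-1\in(-1,0)$ has a unique $C^\gamma$ solution because ``the transfer operator attached to this equation is a strict contraction''. This is false. For negative twist the natural iteration $W=(DF)^{1-\beta}(W\circ F-u)$ \emph{expands}, and in fact the equation need not have any continuous solution: take $F(x)=2x\bmod 1$, $\beta=1/2$, $u(x)=\cos(2\pi x)$; matching Fourier coefficients forces $a_{2^k}\sim -2^{(k-1)/2}$, so no $L^2$ (let alone $C^\gamma$) solution exists. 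Since your $\Phi(\beta,\cdot)$ in (ii) and your rescaled $W_{\beta_n}$ in (iii) are built from this nonexistent $W_\beta$, the whole scheme collapses. The obstacle is structural: solvability of the derived equation is \emph{equivalent} to the extra regularity you are trying to prove, so it cannot be obtained by a soft contraction argument.

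The paper's route is quite different. It never tries to solve the derived equation directly. Instead it applies the adapted fractional derivative $D^\beta$ to the twisted equation, reducing it to a Liv\v{s}ic equation whose solvability is governed by a single number, the asymptotic variance $\sigma^2(\phi_{v,\beta})$; the dichotomy is then $\sigma^2=0$ versus $\sigma^2>0$. Real-analyticity of $\beta\mapsto\sigma^2$ (obtained via the spectral theory of transfer operators, Theorems~\ref{dic333} and~\ref{dic3}) shows the zero set is all of $(0,1)$ or discrete. The delicate point is the endpoint $\beta\to 1$, where the paper changes variables to $1+\beta$ with $\beta\sim 0$, works with \emph{distributional} solutions of the derived equation (which always exist, unlike your $C^\gamma$ ones), and again extracts a real-analytic variance. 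For the ``additionally'' clause the paper does not take a limit of $W_{\beta_n}$ at all: it uses a uniform second-difference estimate $|\Delta_h^2\alpha_{v,\beta_n}(x)|\le C|h|^{1+\gamma}$ (in the spirit of de~Lima--S.) to pass directly to $\alpha_{v,1}\in C^{1+\gamma}$, which then forces $(Dv+D^2F\cdot\alpha_{v,1})/DF$ to be a coboundary.
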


Consider the Besov space of functions $B^s_{1,1}$ on $\mathbb{S}^1$, with $s\in (0,1)$.   Functions  $v\in B^s_{1,1}$ are are not even continuous and/or bounded  in general, and indeed  they can  have a  very bad behavior.  We obtain a new dichotomy for such functions, which are in some sense more similar to the Brjuno function.

\begin{mainthm}[Dichotomy of Smoothness for Besov data] \label{MTB}  Let $F\colon \mathbb{S}^1 \rightarrow \mathbb{S}^1$  be a $C^{1+\gamma}$   expanding map on the circle with degree $2$ with  $\gamma\in (0,1)$. Let $\beta\in (0,\gamma-1/2)$.    For a real-valued function $v\in B^\gamma_{1,1}$   let $\alpha_v$ be the only $L^1$ solution of  $(\ref{ch2})$. Let 

$$\Omega_\beta = \{v\in B^\gamma_{1,1} \colon  \alpha_v\not \in \bigcup_{\delta> 0} B^{\beta+\delta}_{1,1}  \}.$$

Then 
\begin{itemize} 
\item[A.] $\Omega$ is open and dense subset of $B^\gamma_{1,1}$. 
\item[B.] For a residual set of functions
$$\gamma\in C^k([0,1]^d, B^\gamma_{1,1})$$
where $k\in \mathbb{N}$, we have that 
$$\{ t\in [0,1]^d\colon\gamma(t)\in \Omega_\beta \}.$$
has full Lebesgue measure on $[0,1]^d$.
\item[C.] Let $t\in J\mapsto v_t \in B^\gamma_{1,1}$, where $J$ is an open  interval,  be a  real-analytic function. Let 
$$\Lambda = \{ t\in J\colon v_t\not\in \Omega_\beta  \}.$$
Then either $\Lambda$ contains only  isolated points (in particular it  is countable),  or $\Lambda=J $.
\item[D.] Let $v\in B^\gamma_{1,1}$  
and 
$$\Lambda = \{ \beta\in (0,\gamma-1/2) \colon   v \not\in \Omega_\beta\}.$$
Then either $\Lambda$ contains only  isolated points (in particular it  is countable),  or $\Lambda=(0,\gamma-1/2)$.
\item[E.] If $v\not\in \Omega_\beta$ then $\alpha_{\beta,v}\in B^{\gamma-\delta}_{1,1}$ for every $\delta > 0$. 
\end{itemize} 
\end{mainthm}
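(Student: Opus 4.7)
The plan is to prove Theorem \ref{MTB} by the same two-step strategy announced in the abstract and already used for Theorem \ref{mainex}: first reduce the twisted cohomological equation (\ref{ch2}) to an ordinary Liv\v{s}ic equation by applying a fractional derivative $D^\beta$ to both sides, and then characterize the dichotomy through the asymptotic variance of a Central Limit Theorem for the resulting Birkhoff sums. The restriction $\beta<\gamma-\tfrac{1}{2}$ is exactly what is needed so that the reduced observable lies in a Besov space of smoothness exceeding $\tfrac12$, where the CLT and Liv\v{s}ic coboundary theory for the expanding map $F$ apply relative to its equilibrium measure $\mu$.

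\emph{Step 1 (reduction via $D^\beta$).} I would apply a fractional differentiation operator $D^\beta$ to both sides of the twisted equation for $\alpha_v$. Using a fractional Leibniz-type rule together with the fact that $(DF)^\beta$ is a multiplier of the appropriate order, I expect to obtain a Liv\v{s}ic-type equation
\[
w_v \;=\; \Phi\circ F - \Phi,\qquad \Phi = D^\beta \alpha_v,
\]
where $v\mapsto w_v$ is a bounded linear map from $B^\gamma_{1,1}$ into $B^{\gamma-\beta}_{1,1}$, up to a remainder that is strictly more regular. The key analytic step is to show that $\alpha_v\in\bigcup_{\delta>0}B^{\beta+\delta}_{1,1}$ if and only if $\Phi\in\bigcup_{\delta>0}B^{\delta}_{1,1}$, which in turn, by standard Liv\v{s}ic theory for a $C^{1+\gamma}$ expanding map, is equivalent to $w_v$ being an $L^2(\mu)$-coboundary.

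\emph{Step 2 (CLT and derivation of the dichotomy).} On $B^{\gamma-\beta}_{1,1}$ with $\gamma-\beta>\tfrac12$, the observable $w_v$ satisfies a CLT for $\mu$ and its asymptotic variance $\sigma^2(v)\ge 0$ depends real-analytically on $v$. By the classical coboundary characterization, $v\in\Omega_\beta$ iff $\sigma^2(v)>0$. From this the items follow along the same lines as in Theorem \ref{mainex}: (A) openness is immediate from continuity of $\sigma^2$, and density is obtained by adding to any $v$ an explicit perturbation whose image under $v\mapsto w_v$ is a non-coboundary; (B) for a $C^k$-family $t\mapsto v_t$ the function $t\mapsto\sigma^2(v_t)$ is $C^k$ and, generically, not identically zero, so a standard slicing argument makes its vanishing locus Lebesgue-null; (C) for a real-analytic family $t\mapsto v_t$ the function $t\mapsto\sigma^2(v_t)$ is real-analytic, hence either identically zero or with isolated zeros; (D) the same dichotomy applies to $\beta\mapsto\sigma^2_\beta(v)$, which is real-analytic in $\beta\in(0,\gamma-\tfrac12)$; (E) when $\sigma^2(v)=0$ the coboundary $\Phi$ is bounded, and inverting the fractional derivative quantitatively places $\alpha_v$ in $B^{\gamma-\delta}_{1,1}$ for every $\delta>0$.

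The main obstacle is making Step 1 rigorous in this low-regularity setting. Because $v\in B^\gamma_{1,1}$ need not be continuous (let alone essentially bounded), the commutation of $D^\beta$ with the twisted transfer operator must be justified distributionally, and the Leibniz-type remainder has to be controlled so that it genuinely lies in a space of strictly higher smoothness than $w_v$. A second technical difficulty lies in establishing the real-analytic dependence of $\sigma^2$ on $v$: while analyticity of the transfer operator on H\"older spaces is classical, on Besov spaces of integrability index $1$ it requires a Nagaev--Guivarc'h type perturbative argument adapted to the low-regularity norm, and verifying the spectral gap on $B^\gamma_{1,1}$ that makes such a scheme applicable should be the technical heart of the proof.
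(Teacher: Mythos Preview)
Your proposal is correct and follows essentially the same route as the paper. The paper's own proof of Theorem~\ref{MTB} is in fact a one-line reduction to the proof of Theorem~\ref{mainex} (using that the classical $B^\gamma_{1,1}(\mathbb{S}^1)$ coincides with the intrinsic Besov space $\mathcal{B}^\gamma_{1,1}$ and that $C^\infty$ is dense there), and you have accurately reconstructed the underlying machinery: the fractional-derivative reduction to a Liv\v{s}ic equation via the Chain and Leibniz rules (Theorems~\ref{chain} and~\ref{leibnitz}), the coboundary/variance characterization of the dichotomy in $\mathcal{B}^{\gamma-\beta}_{1,1}$ with $\gamma-\beta>\tfrac12$ (Theorems~\ref{azaz}, \ref{dic1}, \ref{varthm}), and the real-analyticity of $\sigma^2$ along analytic families (Proposition~\ref{ana}) that drives items C and D. The two technical obstacles you flag --- distributional justification of the Chain/Leibniz remainders and the spectral gap of the transfer operator on $\mathcal{B}^\gamma_{1,1}$ --- are precisely what the paper supplies in the Appendix and via \cite{as2}. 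One small refinement: for item~B the paper does not rely on $t\mapsto\sigma^2(v_t)$ being $C^k$ directly; it uses only continuity of $\sigma^2$ together with a Baire--Fubini argument, extending an arbitrary family by a one-dimensional real-analytic direction along a fixed $w$ with $\sigma^2(\phi_w)>0$ and then slicing.
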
 

\begin{remark}
Note that Theorem~\ref{MTB} does not allow us to take $F$ to be the Gauss map. In particular, it cannot be applied to the Brjuno function itself. One may wonder whether our  methods can be extended to cover this case.
\end{remark}

\subsection{Strategy of the proof} The  idea is to \emph{reduce} the study of the twisted equation 
\begin{equation} \label{tce4}
  v = \alpha\circ F - (DF)^\beta \alpha
\end{equation}
to the Liv\v{s}ic equation 
\begin{equation} \label{liv4}
  \phi = \psi\circ F - \psi
\end{equation}
by defining a suitably adapted {\it fractional derivative}  $D^\beta$. This
definition actually depends on the expanding map $F$. In the particular
case of the angle-doubling map $F(x)= 2x \bmod 1$, this fractional
derivative behaves quite nicely, mimicking the classical Chain and
Leibniz rules for the usual derivative:
\[
D^\beta (\theta\circ F) =( D^\beta \theta)\circ F \,(DF)^\beta,
\qquad
D^\beta (\theta |Df|^\beta) = |Df|^\beta \, D^\beta \theta.
\]
Thus, applying $D^\beta$ to \eqref{tce4} we obtain
\begin{equation}
  \frac{D^\beta v}{(DF)^\beta}  = (D^\beta \alpha)\circ F - D^\beta \alpha.
\end{equation}

Hence, instead of studying the regularity of $\alpha$ directly, we can
study the regularity of solutions of the Liv\v{s}ic equation
\eqref{liv4} with
\[
\phi =  \frac{D^\beta v}{(DF)^\beta}.
\]
Since $\phi$ is regular enough, it is well known that the existence of a
function $\psi$ solving \eqref{liv4} is governed by the asymptotic
variance $\sigma^2(\phi)$: such a solution exists if and only if
$\sigma^2(\phi)=0$, and in that case the solution $\psi$ inherits the
regularity of $\phi$. This implies that $\alpha$ is \emph{very}
regular. On the other hand, if $\sigma^2(\phi)> 0$ such a function
$\psi$ does not exist, but we can see that a \emph{formal solution} of
\eqref{liv4} is given by 
\[
\psi = -\sum_{k=0}^\infty \phi\circ F^k,
\]
that implies that   $D^\beta \alpha$ is a Birkhoff sum (modulo an additive constant). We can make this rigorous
by interpreting $\psi$ as a \emph{distributional} solution of
\eqref{liv4}. Again, since $\phi$ is regular enough, one can prove that
the (finite-time) Birkhoff sums behave quite wildly and, for instance,
satisfy a \emph{Central Limit Theorem}. This wild behaviour of
$D^\beta \alpha$ implies that $\alpha$ is as irregular as possible.

This yields our dichotomy, and characterizes it in terms of a
\emph{single} non-negative number $\sigma^2(\phi)$. Note that
$\phi = \phi_{v,\beta}$ depends on $v$ and $\beta$. Using transfer
operators and thermodynamical formalism we can show that
\[
v\mapsto \sigma^2(\phi_{v,\beta})
\quad\text{and}\quad
\beta \mapsto \sigma^2(\phi_{v,\beta})
\]
are \emph{real-analytic} maps, and then Theorems \ref{mainex} and
\ref{MTB}, as well as other dichotomy results, follow.

However, one major difficulty is that when $F$ is not linear the Chain
and Leibniz rules above do not hold \emph{exactly}. Nevertheless, we
will see that even when $\theta$ is a suitable \emph{distribution} we
have
\[
D^\beta (\theta\circ F) =( D^\beta \theta)\circ F \,(DF)^\beta + R_C(\theta),
\qquad
D^\beta (\theta |Df|^\beta) = |Df|^\beta \, D^\beta \theta + R_L(\theta),
\]
where $R_C$ and $R_L$ are linear transformations whose images lie in a
space of \emph{functions} rather than distributions. This allows us to
differentiate \eqref{tce4} as before and replace the previous
definition of $\phi$ by
\[
\phi = \frac{D^\beta v + R_C(\alpha) - R_L(\alpha)}{(DF)^\beta},
\]
and then carry out the argument as before.

\subsection{Markovian expanding map } 

Let 
$$F\colon I_0\cup I_1 \rightarrow I$$
be an {\bf onto Markovian expanding map}  acting  on  an compact metric space  $(I,d_I)$ with a reference measure $m$, with 
\begin{itemize}
\item $m(I\setminus (I_0\cup I_1)=0$, where $I_i$ are open sets. 
\item The map $F\colon  I_i \rightarrow  I$ is expanding, that is there is $\lambda > 1$ such that 
$$d_I(F(x),F(y))\geq \lambda d_I(x,y)$$
for every $x,y\in I_i$. 
\item The map $F\colon I_i \rightarrow I$ has a H\"older jacobian with respect to a reference measure $m$, that is,   there is a H\"older function $g\colon I \rightarrow \mathbb{R} $ such that $g> 1$ everywhere and 
$$\int_{I_i} \psi\circ F\ g\ dm = \int_I  \psi \ dm$$
for every $m$-integrable function $\psi$. 
\item $m(I\setminus F(I_i))=0$ for $i=1,2$.
\end{itemize} 

 Let $\mathcal{P}^0=\{I\}$, $\mathcal{P}^1=\{I_1,I_2\}$ and $\mathcal{P}^k$ be the induced Markov partition for $F^k$. 
If $Q\in \mathcal{P}^{k+1}$, $P\in \mathcal{P}^{k}$ and $Q\subset P$ we say that $P$ is the parent of $Q$ and $Q$ is a child of $P$.  

Define 
$$g_n(x) = \prod_{k=0}^{n-1} g(F^kx).$$

We will denote by $|A|$ the measure $m(A)$. It is also convenient to replace the metric $d_I$ by a metric $d$ defined by
\begin{equation}\label{metricdef}  d(x,y)= \inf_{x,y\in P\in \cup_k \mathcal{P}^k}  |P|.\end{equation} 
Note that $d$ not necessarily induces the same topology on $I$ but  $F$ is also expanding with respect to $d$ and $g$ is H\"older with respect to $d$. 

It is well-know that every such $F$ as a unique invariant probability $\mu << m$. Indeed $d\mu/dm$ is H\"older and positive, so in particular there is $C > 0 $ such that 
\begin{equation} \label{lower} \frac{1}{C}<  \frac{d\mu}{dm}< C.\end{equation}

\subsection{Besov spaces} To get sharp results for the regularity of the solutions of the twisted cohomolgical   equation it will be quite important to consider appropriated spaces  of functions  that will be defined using the Markov partition of $F$. 

Let $Q_1^P$ and $Q_2^P$ be the children of $P$, indexed in such way that if $P\in \mathcal{P}^k$ and $R\in \mathcal{P}^{k+1}$ and $F(R)=P$ then 
$F(Q_1^R)= Q_1^P$ and $F(Q_2^R)= Q_2^P$.
 
Define
$$\phi_P = \frac{1_{Q_1^P}}{|Q_1^P|} -\frac{1_{Q_2^P}}{|Q_2^P|}.$$
Note that

$$\mathcal{H}= \{1_I\}\cup \{\phi_P\colon \ P\in \cup_i \mathcal{P}^i  \}.$$
is, after proper normalization, a Hilbert basis of $L^2(I)$.  It is called a {\bf unbalanced Haar wavelet basis}. Indeed  Girardi  and Sweldens \cite{gw} and Haar \cite{haar} proved that $\mathcal{H}$ is a unconditional basis of $L^\rho(I)$ for every $\rho > 1$.

Given $s \in \mathbb{R}$, the space of formal series 
$$
\psi = c_0 1_I + \sum_k \sum_{P \in \mathcal{P}^k} c_P \, |P|^{s+1} \phi_P
$$
such that 
$$
|\psi|_{\mathcal{B}^s_{\infty,\infty}}
  = |c_0| + \sup_k \sup_{P \in \mathcal{P}^k} |c_P| < \infty
$$
is called the Besov space $\mathcal{B}^s_{\infty,\infty}$.  

It turns out that, for $s>0$, we have $\mathcal{B}^s_{\infty,\infty} = C^s(I,d)$, the space of all $s$-Hölder continuous functions with respect to the metric $d$ defined in (\ref{metricdef}). Indeed 

\begin{theorem}[Marra, Morelli, and S. \cite{mms}] \label{fbeta} Let $s > 0$.   There is $C >0$ such that the following holds. For every $\psi\in \mathcal{B}^s_{\infty,\infty}$ and $x,y \in Q\in \mathcal{P}^{k_0}$ we have
$$|\psi(x)-\psi(y)|\leq C|\psi|_{\mathcal{B}^s_{\infty,\infty}}|Q|^s.$$
Moreover  
$$ \psi 1_Q = m(\psi,Q) 1_Q + \sum_{k\geq k_0}  \sum_{\substack{P\in \mathcal{P}^k\\ P\subset Q} } c_P |P|^s \phi_P,$$
where 
$$m(\psi,Q)=\frac{1}{|Q|} \int_Q \psi \ dm,$$
and $|m(\psi,Q)|,|c_P|\leq |\psi|_{\mathcal{B}^s_{\infty,\infty}}.$
\end{theorem}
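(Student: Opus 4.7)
The plan is to combine the bounded distortion of $F$ (inherited from the H\"older Jacobian $g>1$) with the locality of the unbalanced Haar wavelets $\phi_P$, so that the only genuine geometric input is quantitative control of the partition sizes; after that the proof reduces to bookkeeping inside the Haar decomposition.

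First I would record the geometric consequences of the hypotheses. Since $g$ is H\"older with respect to $d$ and $g>1$ everywhere on the compact space $I$, a standard bounded-distortion argument provides constants $\tau,\rho\in(0,1)$ and $C>0$ such that (i) $|Q_i^P|\ge \tau|P|$ for every $P\in\bigcup_k\mathcal{P}^k$ and $i=1,2$; (ii) $|P|\le C\rho^k$ for every $P\in\mathcal{P}^k$; and (iii) $|P|\le C\rho^{k-k_0}|Q|$ whenever $P\in\mathcal{P}^k$ is contained in $Q\in\mathcal{P}^{k_0}$. From (i) we get the pointwise estimate $\|\phi_P\|_\infty\le (2/\tau)|P|^{-1}$, hence $\bigl\||P|^{s+1}c_P\phi_P\bigr\|_\infty\le (2/\tau)\,|\psi|_{\mathcal{B}^s_{\infty,\infty}}|P|^s$, and combined with (ii) the formal series defining $\psi$ converges absolutely in $L^\infty(I)$.

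Next I would exploit the Markov structure to describe how each basis function behaves on a fixed atom $Q\in\mathcal{P}^{k_0}$. Nesting of the partitions leaves only three cases for $P\in\mathcal{P}^k$: if $k<k_0$ and $Q\subset P$ then $Q$ lies entirely in one of the two children of $P$ and $\phi_P$ is \emph{constant} on $Q$; if $k\ge k_0$ and $P\subset Q$ then $\phi_P$ is supported in $P\subset Q$; in every other case $P\cap Q=\emptyset$ and $\phi_P$ vanishes on $Q$. Writing $P_k^Q$ for the ancestor of $Q$ at level $k<k_0$ and $\phi_P(Q)$ for the constant value of $\phi_P$ on $Q$, these observations yield
\[
\psi\,1_Q
 = \Bigl(c_0 + \sum_{k=0}^{k_0-1} c_{P_k^Q}|P_k^Q|^{s+1}\phi_{P_k^Q}(Q)\Bigr)1_Q
 + \sum_{k\ge k_0}\sum_{\substack{P\in\mathcal{P}^k\\ P\subset Q}} c_P|P|^{s+1}\phi_P.
\]
Integrating over $Q$ and using $\int\phi_P\,dm=0$ identifies the leading constant as $m(\psi,Q)$. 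The bound $|c_P|\le |\psi|_{\mathcal{B}^s_{\infty,\infty}}$ is immediate from the definition of the norm, and $|m(\psi,Q)|\le C|\psi|_{\mathcal{B}^s_{\infty,\infty}}$ follows from $|\phi_{P_k^Q}(Q)|\le C|P_k^Q|^{-1}$ together with (ii) after summing the geometric series in $|P_k^Q|^s$.

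For the H\"older estimate, pick $x,y\in Q$ away from the measure-zero set of partition boundaries. The finitely many terms with $Q\subset P$ are constant on $Q$ and cancel in the difference, leaving only the tail $k\ge k_0$ with $P\subset Q$; for each such $k$ at most one atom $P_k^x$ contains $x$ (and similarly one $P_k^y$ contains $y$), so
\[
|\psi(x)-\psi(y)|\le \frac{2}{\tau}\,|\psi|_{\mathcal{B}^s_{\infty,\infty}}\sum_{k\ge k_0}\bigl(|P_k^x|^s + |P_k^y|^s\bigr),
\]
and (iii) turns the right-hand side into a geometric series bounded by $C'\,|\psi|_{\mathcal{B}^s_{\infty,\infty}}|Q|^s$, which extends to all $x,y\in Q$ by uniform convergence. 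The main obstacle in this program is the bounded-distortion input itself: obtaining the uniform constants $\tau$ and $\rho$ in (i)--(iii) is the only step that makes non-trivial use of the H\"older regularity of $\log g$ with respect to the metric $d$ from (\ref{metricdef}) rather than $d_I$; once that is secured everything else is geometric-series bookkeeping.
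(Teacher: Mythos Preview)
The paper does not prove this theorem: it is stated with attribution to Marra, Morelli, and S.\ \cite{mms} and used as a black box. Your argument is correct and supplies a self-contained proof; the identification of bounded distortion (your items (i)--(iii)) as the only genuine analytic input, with everything else reducing to geometric series, is exactly right.

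Two cosmetic remarks. First, the theorem as printed has $c_P|P|^s\phi_P$ in the local decomposition, while your expansion (consistent with the definition of $\mathcal{B}^s_{\infty,\infty}$) yields $c_P|P|^{s+1}\phi_P$; the applications later in the paper (e.g.\ the Multipliers proposition in the Appendix, which writes $\phi_J^{\gamma+1}$) confirm that $s+1$ is intended, so this is a typo in the statement rather than a gap in your proof. Second, you obtain $|m(\psi,Q)|\le C|\psi|_{\mathcal{B}^s_{\infty,\infty}}$ with a constant, whereas the statement omits $C$; again the later applications carry a constant, so your version is the one actually used. One small slip: ``absolute convergence in $L^\infty$'' of the full Haar series is more than you need and more than is true in general (the sum $\sum_{P\in\mathcal{P}^k}|P|^s$ need not be bounded for $s<1$); what you actually use, and what holds, is uniform convergence at each point, since at a fixed $x$ only one $P$ per level contributes.
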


Moreover, the $s$-Hölder norm  and the norm $|\cdot|_{\mathcal{B}^s_{\infty,\infty}}$ are equivalent.   In particular if $h \in \mathcal{B}^s_{\infty,\infty}$ then $e^{\beta h}  \in \mathcal{B}^s_{\infty,\infty}$ for every $\beta \in \mathbb{R}$. 
For $s < 0$, the space $\mathcal{B}^s_{\infty,\infty}$ can be interpreted as a space of distributions. 

The formal series 
$$
\psi = d_0 1_I + \sum_k \sum_{P \in \mathcal{P}^k} d_P \, |P|^{s} \phi_P
$$
belongs to the Besov space $\mathcal{B}^s_{1,1}$ if 
$$
|\psi|_{\mathcal{B}^s_{1,1}}
  = |d_0| + \sum_k \sum_{P \in \mathcal{P}^k} |d_P| < \infty.
$$

For $s>0$, this is a function space continuously embedded in $L^1$.  
For $s \leq 0$, it can be viewed as a space of distributions, as introduced in  Marra, Morelli, and S. \cite{mms}.

 \subsection{Low regularity of the solutions of the twisted cohomologial equation}  We are ready to give {\it lower bounds } for the regularity of the solutions of the twited cohomological equation.   The following theorems generalize several results by M., Moussa and Yoccoz \cite{MMY1}, Baladi and S. \cite{alt}, Todorov \cite{todorov}, and  Bettin and Drappeau  \cite{BD2022} to our setting.
\begin{mainthm}[Solutions with low regularity for H\"older input] \label{main4} Given a bounded function $v$ there is a unique bounded solution $\alpha$ for the cohomological equation 
\begin{equation}\label{ch2} v= \alpha\circ F-  g^{\beta} \alpha, \end{equation} 
Moreover suppose $v\in \mathcal{B}^s_{\infty,\infty}$, and  $\log g \in \mathcal{B}^\gamma_{\infty,\infty}$. 
\begin{itemize} 
\item[I.] If \   $0< s < \operatorname{Re} \beta< \gamma.$   Then  $\alpha \in \mathcal{B}^s_{\infty,\infty}$ and 
 $$v\in \mathcal{B}^s_{\infty,\infty} \overset{S_{\gamma,\beta}}{\mapsto}  \alpha \in \mathcal{B}^s_{\infty,\infty}$$ 
is a bounded linear transformation. Moreover for $0 < s< \beta_0 < \gamma$ and $\delta > 0$ we have
$$\sup_{\substack{  \beta_0 \leq Re \ \beta< \gamma\\ | Im\ \beta|< \delta } }  \|  S_{\gamma,\beta}\|_{\mathcal{B}^s_{\infty,\infty}} < \infty. $$  
\item[II.] Let $0< \operatorname{Re} \beta=s  < \gamma$. Then there is $C$ such that 
$$|\alpha(y)-\alpha(x)|\leq  C(d(y,x)^{\operatorname{Re} \beta}(1-\ln d(x,y))$$
\item[III.] Let $0< \operatorname{Re} \beta<  s$ and $\operatorname{Re} \beta < \gamma$. Then $\alpha \in \mathcal{B}^{\operatorname{Re} \beta}_{\infty,\infty}$ and
$$v\in \mathcal{B}^s_{\infty,\infty}   \mapsto \alpha \in \mathcal{B}^{    \operatorname{Re} \beta}_{\infty,\infty}$$  
is a bounded linear transformation.  

\end{itemize} 
\end{mainthm}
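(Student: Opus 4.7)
The plan is to represent $\alpha$ explicitly as a Neumann-type series and then extract the three regimes of Parts~I--III from a single pointwise H\"older estimate, exploiting Theorem~\ref{fbeta} and bounded distortion of $g$. Rewriting $(\ref{ch2})$ as $\alpha = (\alpha\circ F)/g^\beta - v/g^\beta$ and iterating yields
\[
\alpha = \frac{\alpha\circ F^n}{g_n^\beta} - \sum_{k=0}^{n-1} \frac{v\circ F^k}{g_{k+1}^\beta}.
\]
Since $g \ge g_0 > 1$ and $\operatorname{Re}\beta > 0$, the boundary term vanishes as $n\to\infty$ for any bounded $\alpha$, giving the absolutely convergent representation
\[
\alpha(x) = -\sum_{k=0}^\infty \frac{v(F^k x)}{g_{k+1}^\beta(x)}.
\]
This proves both existence (with $|\alpha|_\infty \lesssim |v|_\infty/(g_0^{\operatorname{Re}\beta} - 1)$) and, by comparing two bounded solutions, uniqueness.

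For the core estimate, fix $Q \in \mathcal{P}^{k_0}$ and $x, y \in Q$. Bounded distortion gives $g_n(z)|P| \asymp 1$ for $z \in P \in \mathcal{P}^n$, hence $|F^k Q| \asymp g_k(x)|Q|$ and $g_{k_0+1}(x) \asymp |Q|^{-1}$. For $k \le k_0$, Theorem~\ref{fbeta} applied to $v$ yields $|v(F^k x) - v(F^k y)| \lesssim |v|_{\mathcal{B}^s_{\infty,\infty}} (g_k(x)|Q|)^s$, and summing the analogous H\"older bound for $\log g$ over $j = 0, \ldots, k$ gives $|\log g_{k+1}(x) - \log g_{k+1}(y)| \lesssim |\log g|_{\mathcal{B}^\gamma_{\infty,\infty}}(g_k(x)|Q|)^\gamma$ (a geometric series dominated by its last term since $\gamma > 0$). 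Therefore
\[
\left|\frac{v\circ F^k}{g_{k+1}^\beta}(x) - \frac{v\circ F^k}{g_{k+1}^\beta}(y)\right| \lesssim \frac{|v|_{\mathcal{B}^s_{\infty,\infty}}(g_k(x)|Q|)^s + |\beta|\,|v|_\infty (g_k(x)|Q|)^\gamma}{g_{k+1}(x)^{\operatorname{Re}\beta}},
\]
while the tail $k > k_0$ contributes $\sum_{k > k_0} g_{k+1}(x)^{-\operatorname{Re}\beta} \asymp |Q|^{\operatorname{Re}\beta}$.

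The three regimes now emerge from the behaviour of $\Sigma := \sum_{k=0}^{k_0} g_k(x)^{s-\operatorname{Re}\beta}$ (using $g(F^k x)^{\operatorname{Re}\beta} \ge g_0^{\operatorname{Re}\beta}$). For $s < \operatorname{Re}\beta$ (Part~I), $\Sigma$ is geometrically convergent; together with the $\gamma$-term $\lesssim |Q|^{\operatorname{Re}\beta} \le |Q|^s$ and the tail, this gives $|\alpha(x)-\alpha(y)| \lesssim |Q|^s$, which by Theorem~\ref{fbeta} means $\alpha \in \mathcal{B}^s_{\infty,\infty}$ with a linear bound in $v$. For $s = \operatorname{Re}\beta$ (Part~II), $\Sigma \asymp k_0 + 1 \asymp 1 - \log|Q|$ by the metric $(\ref{metricdef})$, producing the claimed logarithmic correction. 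For $s > \operatorname{Re}\beta$ (Part~III), $\Sigma$ is dominated by $g_{k_0}(x)^{s-\operatorname{Re}\beta} \asymp |Q|^{-(s-\operatorname{Re}\beta)}$, so the $v$-contribution becomes $|Q|^s \cdot |Q|^{-(s-\operatorname{Re}\beta)} = |Q|^{\operatorname{Re}\beta}$ and the global H\"older exponent saturates at $\operatorname{Re}\beta$. The constants in the core estimate depend on $\beta$ only through $g_0^{\operatorname{Re}\beta}$, the ratio $g_{\min}^{s-\operatorname{Re}\beta}$ of the geometric series, and $|\beta|$, so they are uniform over $\operatorname{Re}\beta \in [\beta_0, \gamma)$ with $|\operatorname{Im}\beta|\le\delta$, yielding the uniform bound on $\|S_{\gamma,\beta}\|_{\mathcal{B}^s_{\infty,\infty}}$.

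The main obstacle I anticipate is the bookkeeping in the core estimate: verifying that the telescoped H\"older increments of $\log g_{k+1}$ genuinely form a geometric series dominated by their last term uniformly in the location of $Q$, and controlling $|g_{k+1}^{-\beta}(x) - g_{k+1}^{-\beta}(y)|$ via a mean-value-type inequality that combines this H\"older control with bounded distortion of $g_{k+1}$ across $Q$ (so the denominator is comparable at $x$ and $y$). A secondary subtlety is invoking the converse direction of Theorem~\ref{fbeta} to upgrade the pointwise H\"older bound into the genuine $\mathcal{B}^s_{\infty,\infty}$-norm control needed to realize $v \mapsto \alpha$ as a bounded linear map between the Besov spaces.
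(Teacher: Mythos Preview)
Your proposal is correct and follows essentially the same route as the paper's proof of Parts~II and~III: split the explicit series $\alpha=-\sum_k v\circ F^k/g_{k+1}^\beta$ at the level $k_0$ where the points $x,y$ separate, control the finite piece via H\"older increments of $v$ and bounded distortion of $g_{k+1}$, and bound the tail by $|Q|^{\operatorname{Re}\beta}$. The three regimes then fall out of whether the geometric ratio $g_k^{s-\operatorname{Re}\beta}$ decays, is constant, or grows.

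The only genuine difference is in Part~I. The paper handles that case by a contraction-mapping argument: defining $\alpha_0=0$, $\alpha_{n+1}=(v-\alpha_n\circ F)/g^\beta$ and showing that the H\"older seminorms satisfy $C_{n+1}\le C_2|v|_{\mathcal{B}^s_{\infty,\infty}}+\lambda C_n$ with $\lambda=\sup(1+\epsilon)^s/g^{\beta_0-s}<1$. Your direct series estimate instead recycles the same splitting used for II--III, yielding a single unified argument for all three parts. Both are standard; your version is slightly cleaner (no need to introduce the iterate $\alpha_n$ or the auxiliary $\epsilon,\delta$), while the paper's contraction form makes the uniform dependence on $\beta$ somewhat more transparent, since $\lambda$ visibly depends only on $\beta_0$ and $s$. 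The anticipated obstacles you flag (telescoping $\log g_{k+1}$ as a geometric series, and the converse of Theorem~\ref{fbeta}) are non-issues: the former is exactly the bounded-distortion estimate the paper also uses, and the latter is stated in the paper immediately after Theorem~\ref{fbeta} as the equivalence of the $s$-H\"older and $\mathcal{B}^s_{\infty,\infty}$ norms.
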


\subsection{Distributional solutions of the  Liv\v{s}ic cohomological equation, asymptotic variance and the Central Limit Theorem} Taking $\beta = 0$ in the twisted cohomological equation, we recover the classical Livšic cohomological equation, a problem extensively studied in the setting of Markov expanding maps. We shall see that a careful analysis of the regularity of {\it distributional solutions}  of the Livšic equation plays a major role in the case $\beta \in (0,1)$.

Here  we  study of (distributional) solutions of the classical {\bf Liv\v{s}ic cohomological equation}  

$$\phi = \psi\circ F -\psi$$

Concerning solutions $\psi$  that are {\it functions,}  the case of H\"older observables $\phi$ is well understood: an $L^2$ solution $\psi$ exists precisely when $\sigma^2(\phi) = 0$, where $\sigma^2(\phi)$ is the {\bf  asymptotic variance }  of $\phi$.  Moreover such a solution is itself Hölder. 

 The asymptotic variance will play a essential role in this work.  Suppose that either $\phi \in \mathcal{B}^t_{1,1}$, with $t\geq 1/2$, or $\phi \in \mathcal{B}^s_{\infty,\infty}$, with$s> 0$.   Then  $\phi \in L^2(m)$. Suppose that 
$$\int \phi \ \rho dm=0.$$ 
Then we have spectral gap on the corresponding function space   and we can define the {\bf  asymptotic variance } 
\begin{equation}\label{variance} \sigma^2(\phi)=\lim_n \int \Big( \frac{\sum_{k<  n} \phi\circ F^k}{\sqrt{n}} \Big)^2 \rho \ dm.\end{equation}

However when we do not have a function $\psi$ satisfying the Liv\v{s}ic cohomological equation, we can yet consider {\it distributional solutions} and study its regularity, even with distributional initial data $v$.  We note that, previously, M., Moussa, and Yoccoz \cite{MMY3}
considered generalized solutions (in the sense of hyperfunctions) to the twisted cohomological equation associated with the Gauss map.

\begin{mainthm}[Distributional solutions for distributional initial data] \label{dic333} Let  $\log g \in \mathcal{B}^\gamma_{\infty,\infty}$. For $\hat{\beta}\in (-\gamma,\gamma)$ and  $\beta \in \mathbb{C}$  close enough  to $\hat{\beta}$, the following holds. Let  $v\in B^{- \gamma }_{1,1}$. 
\begin{itemize}
\item[A.] There  is   $\alpha \in B^{- \gamma }_{1,1}$ that satisfies 
$$v= \alpha\circ F - g^{\beta} \alpha$$ 
if and only if 
$$\langle \frac{v}{g^\beta}, \psi\rangle =0$$
for every $\psi \in E_1(L_{1+\beta})$. Moreover the solution is unique if and only if $1\not\in sp(L_{1+\beta})$. 
\item[B.] Consider the finite dimensional subspace 
$$W_\beta  =  \bigoplus_{|\lambda|\geq 1} E_\lambda(L_{1+\beta}).$$
Suppose 
$$\langle \frac{v}{g^\beta}, \psi\rangle =0$$
for every $\psi \in W_\beta$.  Then a solution $\alpha$   is given by 
\begin{equation} \langle \alpha, \psi\rangle  =- \sum_{n=1}^\infty \langle \frac{v \circ F^{n-1}}{ g_n^\beta} , \psi \rangle. \label{soldist} \end{equation} 
\item[C.]  For $\hat{\beta}=0$ and $\beta \sim 0$ we have that there is an unique $\lambda_{1+\beta} \in sp(L_{1+\beta})$ with $|\lambda_{1+\beta}|=r(L_{1+\beta})$. Moreover $\lambda_{1+\beta} \sim 1$ is a simple  isolated eigenvalue of $L_{1+\beta}$ such that $\beta\mapsto   \lambda_{1+\beta} $ is complex analytic and we can choose a $\lambda_{1+\beta}$-eigenvector $\rho_{1+\beta}\in \mathcal{B}^\gamma_{\infty,\infty}$ of $L_{1+\beta}$ such that $$\beta\mapsto  \rho_{1+\beta}$$ is complex-analytic and  $\rho_{1+\beta}> 0$ for $\beta\in \mathbb{R}$, and also a complex analytic  family of  $\lambda_{1+\beta}$-eigenvectors $$\beta\mapsto m_{1+\beta}\in \mathcal{B}^{-\gamma}_{1,1}$$ of $T$. If 
$$\langle \frac{v}{g^\beta},  \rho_{1+\beta}\rangle =0$$
then $\alpha$ given by (\ref{soldist}) is a solution.  Moreover  if $\beta\neq 0$ and  $\beta\sim 0$  this is the only solution  in $\mathcal{B}^{-\gamma}_{1,1}$.
  \end{itemize} 
\end{mainthm}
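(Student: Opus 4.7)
The plan is to convert the distributional twisted cohomological equation into a spectral problem for the Ruelle transfer operator
\[
L_{1+\beta}\psi(y) \;=\; \sum_{F(x)=y}\frac{\psi(x)}{g(x)^{1+\beta}}
\]
on the Hölder/Besov space $\mathcal{B}^\gamma_{\infty,\infty}$. Concretely, I would divide the equation $v = \alpha\circ F - g^\beta\alpha$ by $g^\beta$ and pair with a test function $\psi\in\mathcal{B}^\gamma_{\infty,\infty}$ (the predual of $\mathcal{B}^{-\gamma}_{1,1}$ through the Haar system). The jacobian hypothesis on $F$ gives
\[
\int \frac{\alpha\circ F}{g^\beta}\,\psi\,dm \;=\; \int \alpha\cdot L_{1+\beta}\psi\,dm,
\]
so the equation becomes, in distributional form,
\[
\bigl\langle v/g^\beta,\psi\bigr\rangle \;=\; \bigl\langle\alpha,(L_{1+\beta}-I)\psi\bigr\rangle \quad\text{for all test }\psi. \qquad(\star)
\]
I then invoke Ruelle--Perron--Frobenius theory on $\mathcal{B}^\gamma_{\infty,\infty}$ (combined with Theorem~\ref{fbeta} for the Hölder identification) to get quasi-compactness of $L_{1+\beta}$ for real $\beta\in(-\gamma,\gamma)$; Kato's analytic perturbation theory then propagates quasi-compactness and the spectral projections holomorphically to a complex neighborhood of the real axis.

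Part A is the Fredholm alternative applied to $(\star)$: a solution $\alpha\in \mathcal{B}^{-\gamma}_{1,1}$ exists iff the linear functional $\langle v/g^\beta,\cdot\rangle$ factors through $L_{1+\beta}-I$, i.e.\ vanishes on $\ker(L_{1+\beta}-I)=E_1(L_{1+\beta})$; uniqueness is dense range of $L_{1+\beta}-I$, equivalently $1\notin sp(L_{1+\beta})$. For Part B, the orthogonality hypothesis to $W_\beta=\bigoplus_{|\lambda|\ge 1}E_\lambda(L_{1+\beta})$ lets me restrict $(\star)$ to the spectral complement, where $L_{1+\beta}$ has spectral radius strictly less than $1$, so the Neumann series $(I-L_{1+\beta})^{-1}=\sum_{n\ge 0}L_{1+\beta}^n$ converges. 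The change-of-variables identity
\[
\bigl\langle v/g^\beta,\,L_{1+\beta}^{n-1}\psi\bigr\rangle \;=\; \bigl\langle v\circ F^{n-1}/g_n^\beta,\,\psi\bigr\rangle,
\]
obtained by iterating $g_n(x)=g(F^{n-1}x)\,g_{n-1}(x)$ and repeatedly applying the jacobian identity, rewrites $\alpha=-(I-L_{1+\beta})^{-1}(v/g^\beta)$ as the series \eqref{soldist}.

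For Part C, at $\beta=0$ the operator $L_1$ is the standard Ruelle transfer operator associated with $(F,m)$; classical Perron--Frobenius on $\mathcal{B}^\gamma_{\infty,\infty}$ supplies a simple, isolated leading eigenvalue $\lambda_1=1$ with strictly positive Hölder eigenvector $\rho_1$ and dual eigenmeasure $m_1\in\mathcal{B}^{-\gamma}_{1,1}$, separated from the rest of the spectrum by a gap. Kato's theory yields analytic families $\beta\mapsto(\lambda_{1+\beta},\rho_{1+\beta},m_{1+\beta})$ preserving positivity on the real axis, and simplicity guarantees that $\lambda_{1+\beta}$ remains the unique spectral element of modulus $r(L_{1+\beta})$ in a neighborhood of $0$. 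Under the orthogonality assumption $\langle v/g^\beta,\rho_{1+\beta}\rangle=0$, the functional $v/g^\beta$ annihilates the full outer spectral subspace $W_\beta=\mathrm{span}(\rho_{1+\beta})$, so Part~B produces the solution $\alpha$ given by \eqref{soldist}. To upgrade to uniqueness when $\beta\ne 0$ I will compute the Kato derivative $\partial_\beta\lambda_{1+\beta}|_{\beta=0}=-\int\log g\,d\mu$, which equals $-h_\mu(F)<0$ by the Rokhlin--Pesin entropy formula for expanding maps; hence $\lambda_{1+\beta}\neq 1$ for all small $\beta\ne 0$, so $1\notin sp(L_{1+\beta})$ and uniqueness follows from Part~A.

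The main technical obstacle is establishing quasi-compactness and a spectral gap for $L_{1+\beta}$ on $\mathcal{B}^\gamma_{\infty,\infty}$ together with the holomorphic dependence on complex $\beta$---this is where the Hölder structure from Theorem~\ref{fbeta} and a Lasota--Yorke type estimate for $L_{1+\beta}$ must be used carefully, since the weight $g^{-(1+\beta)}$ is a complex-valued multiplier. Once this Ruelle-type input is in place, Parts A, B, and C reduce to a standard combination of the Fredholm alternative, the Neumann series, and Kato perturbation theory as sketched above.
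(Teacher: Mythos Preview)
Your proposal is correct and follows essentially the same route as the paper: the paper introduces the operator $T(v)=v\circ F/g^\beta$ on $\mathcal{B}^{-\gamma}_{1,1}$, observes $T^\star=L_{1+\beta}$ via the duality $(\mathcal{B}^{-\gamma}_{1,1})^\star=\mathcal{B}^\gamma_{\infty,\infty}$, and then reads off Part~A from the Fredholm alternative $R(T-I)=E_1(L_{1+\beta})_\perp$; for Part~B it decomposes $L_{1+\beta}=K+R$ along the outer/inner spectrum and uses $\|R^{n}\|\le C\theta^n$ to sum the series; for Part~C it invokes analytic perturbation of the simple leading eigenvalue and computes the pressure derivative to get $\lambda_{1+\beta}\neq 1$ for $\beta\neq 0$. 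One minor remark: your sign $\partial_\beta\lambda_{1+\beta}|_{\beta=0}=-\int\log g\,d\mu<0$ is the correct one for the operator $L_{1+\beta}\psi=\sum_{Fx=y}\psi(x)/g(x)^{1+\beta}$ (the paper states the opposite sign, but the conclusion $\lambda_{1+\beta}\neq 1$ is unaffected), and invoking the Rokhlin entropy formula is unnecessary since $g>1$ already gives $\int\log g\,d\mu>0$.
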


The case $\beta=0$ is of special interest for us.

\begin{mainthm}[Distributional solutions for the Lvisic equation I]  \label{erer}  Let $0< s<  \gamma$.  Let $\log g \in \mathcal{B}^\gamma_{\infty,\infty}$.  Consider the closed subspace
$$V_0=\{ v\in \mathcal{B}_{1,1}^{-s}\colon \langle v, \rho\rangle =0\}.$$
For every $\phi \in V_0$  there is an unique  distribution  (up to addition by a multiple of $1$)  $$\psi =\psi_\phi \in \mathcal{B}^{-s}_{1,1}$$  that is the solution for the  Liv\v{s}ic  cohomological equation 
$$\phi= \psi\circ F - \psi.$$
Indeed, up to the addition of  a constant, $\psi$ is equal to 
$$ \psi_\phi  = -\sum_{k=0}^\infty  \phi\circ F^k.$$
Moreover   $$\phi\in V_0 \mapsto \psi_\phi \in  V_0\subset \mathcal{B}^{-s}_{1,1} $$ is a bounded linear transformation. 
\end{mainthm}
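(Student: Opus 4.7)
The result is the $\beta=0$ specialization of Theorem \ref{dic333}, where the twist factor $g^\beta$ trivializes to $1$. By Theorem \ref{dic333}(C) the transfer operator $L_1=L$ has leading eigenvalue $1$ with eigenvector $\rho_1=\rho$ (the invariant density, in $\mathcal{B}^\gamma_{\infty,\infty}$) and dual eigenvector $m_1=m$. I will (i) establish convergence of the formal Birkhoff series at the distributional level using the spectral gap of $L$, and (ii) identify the kernel of the Liv\v{s}ic operator $\psi\mapsto \psi\circ F -\psi$ in $\mathcal{B}^{-s}_{1,1}$ with $\mathbb{C}\cdot 1_I$.

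\textbf{Existence and boundedness.} The duality pairing $\mathcal{B}^{-s}_{1,1}\times \mathcal{B}^s_{\infty,\infty}\to \mathbb{C}$ extends the integral pairing, and composition with $F$ is defined on distributions dually to $L$ via $\langle \phi\circ F,\theta\rangle:=\langle \phi,L\theta\rangle$, consistent with the functional case since $\int(\phi\circ F)\,\theta\,dm=\int\phi\cdot L\theta\,dm$. Quasi-compactness of $L$ on $\mathcal{B}^s_{\infty,\infty}$ for $0<s<\gamma$, already in use in Theorems \ref{main4} and \ref{dic333}, yields a spectral decomposition
$$L^k\theta = \Bigl(\int \theta\,dm\Bigr)\rho + R^k\theta,\qquad \|R^k\theta\|_{\mathcal{B}^s_{\infty,\infty}}\le C\nu^k\|\theta\|_{\mathcal{B}^s_{\infty,\infty}},$$
for some $\nu\in (0,1)$. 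For $\phi\in V_0$ the hypothesis $\langle \phi,\rho\rangle=0$ annihilates the leading term, so
$$\sum_{k=0}^\infty \bigl|\langle \phi,L^k\theta\rangle\bigr|=\sum_{k=0}^\infty \bigl|\langle \phi,R^k\theta\rangle\bigr|\le \frac{C}{1-\nu}\|\phi\|_{\mathcal{B}^{-s}_{1,1}}\|\theta\|_{\mathcal{B}^s_{\infty,\infty}}.$$
Hence $\psi_\phi:=-\sum_{k\ge 0}\phi\circ F^k$ is a well-defined distribution in $\mathcal{B}^{-s}_{1,1}$, the map $\phi\mapsto \psi_\phi$ is bounded, and $\langle\psi_\phi,\rho\rangle=-\sum_k\langle\phi,L^k\rho\rangle=-\sum_k\langle\phi,\rho\rangle=0$ places $\psi_\phi$ in $V_0$. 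The Liv\v{s}ic identity $\psi_\phi\circ F-\psi_\phi=\phi$ then follows by telescoping the partial sums, applied termwise against any test function.

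\textbf{Uniqueness.} If $\chi\in \mathcal{B}^{-s}_{1,1}$ satisfies $\chi\circ F=\chi$, then for every $\theta\in\mathcal{B}^s_{\infty,\infty}$, iterating gives $\langle\chi,\theta\rangle=\langle \chi,L^n\theta\rangle\to \bigl(\int\theta\,dm\bigr)\langle\chi,\rho\rangle$ by the spectral gap. Thus $\chi$ pairs with test functions as $\theta\mapsto c\int\theta\,dm$ with $c=\langle \chi,\rho\rangle$, i.e.\ $\chi=c\cdot 1_I$. The kernel of the Liv\v{s}ic operator on $\mathcal{B}^{-s}_{1,1}$ is therefore $\mathbb{C}\cdot 1_I$, and the chosen $\psi_\phi$ is the unique representative of its equivalence class satisfying $\langle\psi_\phi,\rho\rangle=0$.

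\textbf{Main obstacle.} The telescoping is classical; the real work is making the duality rigorous at negative regularity. Concretely one must verify that $L$ restricts to a bounded operator on $\mathcal{B}^s_{\infty,\infty}$ with the stated spectral gap across the entire range $0<s<\gamma$, and that convergence of the dual series in every test-function direction upgrades to convergence in $\mathcal{B}^{-s}_{1,1}$. Both should follow from the Haar-wavelet description of the Besov spaces in Theorem \ref{fbeta} combined with a standard Lasota--Yorke / Doeblin--Fortet analysis of $L$ of the kind already invoked in the proof of Theorem \ref{dic333}.
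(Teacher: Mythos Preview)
Your proof is correct and follows essentially the same route as the paper: both use the spectral gap of the transfer operator $\Phi_g=L$ on $\mathcal{B}^s_{\infty,\infty}$ and the duality $(\mathcal{B}^{-s}_{1,1})^\star=\mathcal{B}^s_{\infty,\infty}$ to show that the Birkhoff series converges and defines a bounded map. The only cosmetic difference is that the paper first restricts to a dense subspace $V_0\cap\mathcal{T}$ (finite Haar sums), bounds $\psi_w$ there, and extends by continuity, whereas you work directly at the distributional level via $\langle\phi\circ F^k,\theta\rangle=\langle\phi,L^k\theta\rangle$; your uniqueness argument via $\langle\chi,\theta\rangle=\langle\chi,L^n\theta\rangle\to(\int\theta\,dm)\langle\chi,\rho\rangle$ is also more explicit than the paper's one-line ``it is easy to see.'' The point you flag about upgrading dual-pairing bounds to $\mathcal{B}^{-s}_{1,1}$-norm convergence is handled by the fact that the canonical embedding of the $\ell^1$-type space $\mathcal{B}^{-s}_{1,1}$ into $(\mathcal{B}^s_{\infty,\infty})^\star$ is isometric, so your estimate $|\langle\phi\circ F^k,\theta\rangle|\le C\nu^k\|\phi\|\,\|\theta\|$ already gives $\|\phi\circ F^k\|_{\mathcal{B}^{-s}_{1,1}}\le C\nu^k\|\phi\|$ and hence absolute convergence in norm.
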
 

\begin{mainthm}[Distributional solutions for the Lvisic equation II]  \label{erer2}  Let $0< s<  \gamma$, with $\gamma\leq 1$. Let $\log g \in \mathcal{B}^\gamma_{\infty,\infty}$.  Consider the closed subspace of $\mathcal{B}^{-s}_{\infty,\infty}$ 
$$W_0=\overline{\{ v\in \mathcal{T} \colon  \int v  \rho \ dm=0\}}.$$
For every $\phi \in W_0$  there is an unique distribution   (up to addition by a multiple of $1$) $$\psi=\psi_\phi\in \mathcal{B}^{-s}_{\infty,\infty}$$  that is the solution for the  Liv\v{s}ic  cohomological equation 
$$\phi=\psi\circ F - \psi.$$
Indeed, up to the addition of  a constant, $\psi$ is equal to 
$$ \psi_\phi  = -\sum_{k=0}^\infty  \phi\circ F^k.$$
Moreover   $\phi\in W_0 \mapsto \psi_\phi  \in W_0 \subset   \mathcal{B}^{-s}_{\infty,\infty} $ is a bounded linear transformation. 
\end{mainthm}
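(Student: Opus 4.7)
The plan is to realize $\mathcal{B}^{-s}_{\infty,\infty}$ as the topological dual of $\mathcal{B}^{s}_{1,1}$ under the pairing $\langle\phi,\psi\rangle := \int\phi\psi\,dm$, and to exploit the fact that the composition operator $C_F\phi := \phi\circ F$ is the Banach adjoint of the transfer operator $L$: the change-of-variables identity $\int\psi\circ F\cdot g\,dm = \int\psi\,dm$ translates into $\langle C_F\phi,\psi\rangle = \langle\phi,L\psi\rangle$. This duality is visible already at the level of Haar coefficients because the $\phi_P$ are $L^2(m)$-orthogonal with $\int\phi_P^2\,dm \asymp |P|^{-1}$, so the scalings $|P|^s$ (in $\mathcal{B}^s_{1,1}$) and $|P|^{-s+1}$ (in $\mathcal{B}^{-s}_{\infty,\infty}$) are conjugate and the norm pairing reduces to the standard $\ell^1$--$\ell^\infty$ duality on coefficients.

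First I would establish, for $0<s<\gamma$, the quasi-compactness of $L$ on $\mathcal{B}^s_{1,1}$ with a spectral gap: a splitting $L = P + N$ in which $P\psi = \rho\int\psi\,dm$ is the rank-one spectral projection onto the simple leading eigenvalue~$1$, and $\|N^k\|_{\mathcal{B}^s_{1,1}} \le C\theta^k$ for some $\theta\in(0,1)$. This parallels what is already needed for Theorem~\ref{erer} and rests on a Lasota--Yorke--type inequality adapted to the Haar coefficients plus the Hennion--Ionescu-Tulcea criterion, the key input being $\log g \in \mathcal{B}^\gamma_{\infty,\infty}$ (compare \cite{mms}). Dualizing then gives $C_F^k = P^{\ast} + (N^{\ast})^k$ on $\mathcal{B}^{-s}_{\infty,\infty}$, with $P^{\ast}\phi = \langle\phi,\rho\rangle\cdot 1_I$ and $\|(N^{\ast})^k\| \le C\theta^k$.

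To construct $\psi_\phi$: since $\rho \in \mathcal{B}^\gamma_{\infty,\infty} \hookrightarrow \mathcal{B}^s_{1,1}$ (as $s<\gamma$, by direct estimation on the Haar coefficients using expansion), the functional $\phi \mapsto \langle\phi,\rho\rangle$ is continuous on $\mathcal{B}^{-s}_{\infty,\infty}$; hence the defining condition $\int v\rho\,dm=0$ on the dense subset $\mathcal{T}\cap W_0$ extends to $W_0$, and $P^{\ast}\phi = 0$ for every $\phi\in W_0$. The geometric bound $\|\phi\circ F^k\|_{\mathcal{B}^{-s}_{\infty,\infty}} \le C\theta^k\|\phi\|_{\mathcal{B}^{-s}_{\infty,\infty}}$ then yields absolute convergence of $\psi_\phi := -\sum_{k\ge 0}\phi\circ F^k$, with the announced bound $\|\psi_\phi\|_{\mathcal{B}^{-s}_{\infty,\infty}} \le \tfrac{C}{1-\theta}\|\phi\|_{\mathcal{B}^{-s}_{\infty,\infty}}$. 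A one-line telescoping gives $\psi_\phi\circ F - \psi_\phi = \phi$, and the computation $\langle\psi_\phi,\rho\rangle = -\sum_k\langle\phi,L^k\rho\rangle = -\sum_k\langle\phi,\rho\rangle = 0$ confirms $\psi_\phi\in W_0$. For uniqueness modulo $\mathbb{R}\cdot 1_I$, any $\psi \in \mathcal{B}^{-s}_{\infty,\infty}$ with $\psi\circ F = \psi$ annihilates the range of $L-I$ on $\mathcal{B}^s_{1,1}$, which by the spectral gap coincides with the closed hyperplane $\{\varphi : \int\varphi\,dm = 0\}$; its annihilator in the dual is $\mathbb{R}\cdot 1_I$, so $\psi$ is constant.

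The hard part will be the first step: establishing the spectral gap of $L$ on the $\ell^1$-type Besov scale $\mathcal{B}^s_{1,1}$ for the full range $0<s<\gamma$. The Hölder scale $\mathcal{B}^s_{\infty,\infty}$ is classical, but the $\ell^1$-norm is more delicate and requires a combinatorial control of how $L$ redistributes wavelet coefficients among the children of each Markov cell, together with a matching compactness assertion. Once this Lasota--Yorke inequality is secured in the Haar-coefficient framework of \cite{mms}, the rest of the argument is routine Neumann-series analysis adjoined with duality.
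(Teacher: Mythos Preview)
Your approach is essentially identical to the paper's: both use the duality $(\mathcal{B}^s_{1,1})^\star = \mathcal{B}^{-s}_{\infty,\infty}$ together with the spectral gap of the transfer operator on $\mathcal{B}^s_{1,1}$, then dualize to control the Koopman operator and sum the Neumann series. The only divergence is that you flag the spectral gap on $\mathcal{B}^s_{1,1}$ as the hard step to be worked out, whereas the paper simply cites it as a known result (Arbieto and S.~\cite{as2}; see also Nakano and Sakamoto~\cite{ns}), so no Lasota--Yorke argument needs to be reproduced here.
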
 

 Remarkably, the {\it martingale approximations}  of such  distributional solutions of the Lvisic equation satisfy a certain Central Limit Theorem. Given $x\in I$ denote by $P_k(x)$ the unique element of $\mathcal{P}^k$ such that $x \in P_k(x)$. Given a distribution $\psi\in  \mathcal{B}^{-s}_{1,1}$ one can see
$$ \psi\left( \frac{1_{P_n(x)}}{|P_n(x)|}\right)$$
as the average of $\psi$ on the set $P_n(x)$.

\begin{mainthm}[Central limit theorem for martingale approximations]  \label{CLTweak}  Let $\phi \in \mathcal{B}^s_{1,1}$ with $s > 1/2$, be a real valued function  with zero average with respect to $\rho m$, and suppose $\sigma(\phi)\neq 0$. Let $\psi \in \mathcal{B}^{-s}_{1,1}$ be as  in Theorem \ref{erer}. Then 
$$\lim_{n\rightarrow +\infty} m\left(x\in I\colon \ \frac{1}{\sqrt{n}} \psi\left( \frac{1_{P_n(x)}}{|P_n(x)|} \right) \leq z \right)=  \frac{1}{\sigma(\phi) \sqrt{2\pi}} \int_{-\infty}^z e^{-\frac{t^2}{2\sigma^2(\phi)}}  \  dt,$$
that is  
$$\lim_{n\rightarrow +\infty} m(x\in I\colon \ \frac{\sum_{k=0}^{n-1}   c_0(\psi,P^k(x))\phi_{P^k(x)}(x)  }{\sqrt{n}} \leq z ) =  \frac{1}{\sigma(\phi)  \sqrt{2\pi}} \int_{-\infty}^z e^{-\frac{t^2}{2\sigma^2(\phi)}}  \  dt.$$

The same holds for $\phi \in \mathcal{B}^s_{\infty,\infty}$ and in this case additionally  there is $C$ such that 
$$\left| \psi\left( \frac{1_{P_n(x)}}{|P_n(x)|}\right)    -\sum_{k< n} \phi(F^k(x)) \right|\leq C$$
for every $x\in I$.
\end{mainthm}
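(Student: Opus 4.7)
The plan is to recognize $M_n(x) := \psi(1_{P_n(x)}/|P_n(x)|)$ as a martingale on $(I,m)$ with respect to the filtration $(\sigma(\mathcal{P}^n))_{n\ge 0}$. The martingale identity is immediate from averaging over the two children of each $P\in \mathcal{P}^n$. A short computation using the Haar identity $1_{Q_j^R}/|Q_j^R| - 1_R/|R| = \pm (|Q_{3-j}^R|/|R|)\phi_R$, with sign depending on which child of $R$ contains $x$, yields the increment
\[
D_k(x) := M_{k+1}(x)-M_k(x) = c_0(\psi, P^k(x))\,\phi_{P^k(x)}(x),
\]
where $c_0(\psi,R) = (|Q_1^R||Q_2^R|/|R|)\,\psi(\phi_R)$. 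This identifies $M_n-M_0$ with the sum featured in the theorem, reducing both formulations to a single CLT.

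Next, I would pair the distributional Liv\v{s}ic equation $\phi = \psi\circ F-\psi$ with the test function $h_n(x,\cdot) := 1_{P_n(x)}/|P_n(x)|$, obtaining $M_n(x) = -\phi(h_n(x,\cdot)) + \psi(Lh_n(x,\cdot))$, where $L$ is the transfer operator. Bounded distortion makes $Lh_n(x,\cdot)$ close to $h_{n-1}(F(x),\cdot)$ with a geometrically small error, so iterating backwards gives
\[
M_n(x) = -\sum_{k=0}^{n-1}\phi(h_{n-k}(F^k(x),\cdot)) + M_0(F^n(x)) + E_n(x),
\]
with $E_n$ uniformly bounded. For $\phi\in \mathcal{B}^s_{\infty,\infty}$, Theorem~\ref{fbeta} yields $|\phi(h_\ell(z,\cdot))-\phi(z)| = |m(\phi,P_\ell(z))-\phi(z)| \le C|P_\ell(z)|^s \le C\lambda^{-\ell s}$, which is geometrically summable; this gives the bounded-error estimate $|M_n(x)+S_n\phi(x)|\le C$ claimed in the theorem. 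The CLT for $M_n/\sqrt n$ then follows from the classical CLT for Birkhoff sums of H\"older observables over mixing expanding maps, which via the Nagaev-Guivarc'h/spectral-gap method holds under $m$ with variance $\sigma^2(\phi)$.

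For the Besov case $\phi\in \mathcal{B}^s_{1,1}$ with $s>1/2$, pointwise values of $\phi$ are not defined, so there is no hope for a uniform bound on $M_n+S_n\phi$. I would instead apply Brown's martingale CLT directly to $(M_n,\mathcal{P}^n)$. The Lindeberg condition is implied by control on the individual jumps $|D_k|$ through the Besov regularity of $\psi$ (bounds on its Haar coefficients $\psi(\phi_R)$, available by duality and Theorem~\ref{erer}). The main obstacle, which I expect to be the hardest step, is the convergence of the conditional quadratic variation
\[
\langle M\rangle_n(x) = \sum_{k=0}^{n-1} \frac{|Q_1^{P_k(x)}||Q_2^{P_k(x)}|}{|P_k(x)|^2}\,\psi(\phi_{P_k(x)})^2
\]
to $n\,\sigma^2(\phi)$ in $m$-probability. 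I would approach this by approximating $\phi$ by its Haar truncations $\phi_N$ (which are H\"older), invoking the H\"older case above for the truncated martingales $M^N_n$, and then passing $N\to\infty$ using the continuity $\sigma^2(\phi_N)\to\sigma^2(\phi)$ (from Theorem~\ref{erer} and the real-analyticity of the variance established earlier in the paper) together with uniform-in-$n$ $L^2(m)$-control of $M^N_n-M_n$ coming from spectral-gap estimates on the transfer operator acting on the appropriate Besov space.
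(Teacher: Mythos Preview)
Your treatment of the H\"older case $\phi\in\mathcal{B}^s_{\infty,\infty}$ is essentially the paper's argument: both compare $\psi(1_{P_n(x)}/|P_n(x)|)$ to the Birkhoff sum $-\sum_{k<n}\phi(F^kx)$ via bounded distortion (the paper packages this in a chain $\theta_n^1,\theta_n^2,\theta_n^3,\theta_n^4$ and Lemma~\ref{ite3}, you iterate the Liv\v{s}ic identity), obtain a uniformly bounded difference, and invoke the classical CLT for Birkhoff sums.

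For the Besov case you take a genuinely different, and substantially harder, route, and your reason for doing so is a misconception. For $\phi\in\mathcal{B}^s_{1,1}$ with $s>1/2$ one has $\phi\in L^2(m)$, so pointwise values and Birkhoff sums $S_n\phi$ are perfectly well defined almost everywhere; what fails is only the \emph{uniform} bound on $M_n+S_n\phi$. The paper simply relaxes this to an $L^1(m)$ bound: the comparison $\theta_n^3\to\theta_n^4$ (averages of $\phi$ over $F^kP_n(x)$ versus point values $\phi(F^kx)$) is estimated by
\[
\|\theta_n^3-\theta_n^4\|_{L^1(m)} \le C\sum_{k<n}\sum_{Q\in\mathcal{P}^{n-k}}\int_Q\Big|\phi-\tfrac{1}{|Q|}\!\int_Q\phi\Big|\,dm \le C|\phi|_{\mathcal{B}^s_{1,1}},
\]
uniformly in $n$. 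Then $(\theta_n^1-\theta_n^4)/\sqrt n\to 0$ in probability, and Slutsky transfers the CLT for Birkhoff sums (Theorem~\ref{varthm}, valid in $\mathcal{B}^s_{1,1}$ via Gordin's method) to $M_n$.

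Your proposed route through Brown's martingale CLT has a real gap at the quadratic variation step. You would need $\langle M\rangle_n/n\to\sigma^2(\phi)$ in probability, and your truncation sketch does not deliver this: continuity of $\phi\mapsto\psi_\phi$ in $\mathcal{B}^{-s}_{1,1}$ gives $\psi_N\to\psi$ in that norm, but it is not clear how to turn this into \emph{uniform-in-$n$} $L^2$ control of $M_n^N-M_n$, since $\|1_{P_n(x)}/|P_n(x)|\|_{\mathcal{B}^s_{\infty,\infty}}$ is not bounded in $n$. The paper's $L^1$-comparison-plus-Slutsky argument sidesteps all of this.
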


\subsection{The connection between the Liv\v{s}ic and twisted equations: fractional derivatives} 

The solutions of the twisted equation and of the Liv\v{s}ic equation are connected through {\it fractional derivatives.}  Given a distribution

$$\psi =c_01_I + \sum_{k=0}^\infty \sum_{P\in \mathcal{P}^k} c_P \phi_P,$$

we define the $\beta$-fractional derivative of $\psi$ as the distribution 

$$D^\beta \psi = \sum_{k=0}^\infty \sum_{P\in \mathcal{P}^k} c_P |P|^{-\beta}  \phi_P$$

Here $\beta\in \mathbb{C}$ (in particular  $\psi - D^0\psi$ is a constant function).  Note that even if $\phi$ is a  function   then it is not necessarily true that the r.h.s. converges as a function, so one need to see the r.h.s. as either a formal series or as a distribution.  

Notice that if $s \in \mathbb{R}$ then
$$D^\beta\colon \mathcal{B}^s_{1,1}\rightarrow \mathcal{B}^{s-\operatorname{Re} \beta}_{1,1}$$
and
$$D^\beta\colon \mathcal{B}^s_{\infty,\infty}\rightarrow \mathcal{B}^{s-\operatorname{Re} \beta}_{\infty,\infty}$$
are  continuous and onto linear maps whose kernel is the space of constant functions.

\begin{mainthm}[Solutions with low regularity, Liv\v{s}ic equation  and Birkhoff sums] \label{azaz}  Let $0< s< \operatorname{Re}  \beta<   \gamma$. Suppose that $v\in \mathcal{B}^\gamma_{1,1}$ (respectively   $v\in  \mathcal{B}^\gamma_{\infty,\infty}$) and  $\log g \in \mathcal{B}^\gamma_{\infty,\infty}$. Let $\alpha_v \in L^1(m)$ be a solution 
for the cohomological equation
\begin{equation}\label{ch2} v= \alpha_v\circ F-  g^{\beta} \alpha_v. \end{equation} 
Then $\alpha_v \in  \mathcal{B}^{s}_{1,1}$, that is, $D^\beta \alpha_v$  is a  {\bf distribution} in  $\mathcal{B}^{s-\operatorname{Re}  \beta}_{1,1}$. Moreover 
$$\phi_v := (D^\beta \alpha_v)\circ F - D^\beta \alpha_v$$
is a {\bf function} in $\mathcal{B}_{1,1}^{ \gamma- \operatorname{Re} \beta  -\epsilon }$ (respectively   $\phi_v\in  \mathcal{B}^{\gamma- \operatorname{Re} \beta -\epsilon}_{\infty,\infty}$)  for every small $\epsilon > 0$, and 
$$\int \phi_v \ d\mu =0.$$ 
Moreover, up to an additive constant,
$$D^\beta \alpha_v  = - \sum_{k=0}^\infty  \phi_v\circ F^k.$$
and for every  $\delta  \in (0,\gamma)$ we have that $\psi_v= D^\beta \alpha_v$ is the unique  distribution in $\mathcal{B}^{-\delta}_{1,1}$ (respectively $\mathcal{B}^{-\delta}_{\infty,\infty}$  )  that is a solution for Liv\v{s}ic cohomological equation
$$\phi_v= \psi_v\circ F -\psi_v.$$
\end{mainthm}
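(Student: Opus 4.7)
The plan is to apply the fractional derivative $D^\beta$ to the twisted equation, invoke the Chain/Leibniz rules with remainder announced in the Strategy subsection, and recognise the resulting identity as a Liv\v{s}ic equation for $D^\beta \alpha_v$, which then falls under Theorem \ref{erer} or \ref{erer2}.

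First I would upgrade the regularity of $\alpha_v$ from $L^1$ to $\mathcal{B}^s_{1,1}$ (respectively $\mathcal{B}^s_{\infty,\infty}$). Since $\gamma > \operatorname{Re}\beta > s$ and the scales of Besov spaces built from the Markov partition satisfy the embedding $\mathcal{B}^\gamma_{*,*} \hookrightarrow \mathcal{B}^s_{*,*}$, the data $v$ falls in the range of Theorem \ref{main4} (part I) in the $\mathcal{B}^s_{\infty,\infty}$ setting; in the $\mathcal{B}^\gamma_{1,1}$ setting the same bootstrap applies with the analogous $\mathcal{B}^s_{1,1}$ bounds for the transfer-type operator $T f = g^{-\beta}(f\circ F)$ iterated against $g^{-\beta} v$. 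Because the $L^1$ solution is unique in a class containing the constructed bounded/Besov solution, the two agree and $\alpha_v \in \mathcal{B}^s_{1,1}$, so $D^\beta \alpha_v$ makes sense as a distribution in $\mathcal{B}^{s-\operatorname{Re}\beta}_{1,1}$.

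Next I would apply $D^\beta$ to $v = \alpha_v\circ F - g^\beta \alpha_v$ using the announced rules
\[
D^\beta(\alpha_v\circ F) = \bigl((D^\beta \alpha_v)\circ F\bigr)\,g^\beta + R_C(\alpha_v),\qquad
D^\beta(g^\beta \alpha_v) = g^\beta D^\beta \alpha_v + R_L(\alpha_v),
\]
and divide by $g^\beta$ to obtain
\[
\phi_v := (D^\beta \alpha_v)\circ F - D^\beta \alpha_v = \frac{D^\beta v - R_C(\alpha_v) + R_L(\alpha_v)}{g^\beta}.
\]
The first summand $D^\beta v$ lies in $\mathcal{B}^{\gamma-\operatorname{Re}\beta}_{1,1}$ (respectively $\mathcal{B}^{\gamma-\operatorname{Re}\beta}_{\infty,\infty}$) since $D^\beta$ loses exactly $\operatorname{Re}\beta$ derivatives. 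The correction terms $R_C(\alpha_v)$ and $R_L(\alpha_v)$ take values in a genuine function space of regularity at least $\gamma - \operatorname{Re}\beta - \epsilon$ by the very definition of the Chain/Leibniz remainders (this is the whole point of that decomposition). Multiplication by the Hölder, bounded-below function $g^{-\beta}$ preserves these spaces, so $\phi_v$ is a function in $\mathcal{B}^{\gamma-\operatorname{Re}\beta-\epsilon}_{1,1}$ (resp.\ $\mathcal{B}^{\gamma-\operatorname{Re}\beta-\epsilon}_{\infty,\infty}$).

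Now $\phi_v$ is a genuine function and $D^\beta \alpha_v$ is a distribution solution of the Liv\v{s}ic equation $\phi_v = \psi\circ F - \psi$ in $\mathcal{B}^{-\delta}_{1,1}$ (respectively $\mathcal{B}^{-\delta}_{\infty,\infty}$) for any small $\delta > \operatorname{Re}\beta - s$. Testing this identity against the invariant density $\rho$ forces $\int \phi_v\, d\mu = \langle \phi_v,\rho\rangle = 0$: using the spectral-gap decomposition associated to the transfer operator, I would pair $\phi_v$ with $\rho$ through the Haar-wavelet expansion of Theorem \ref{fbeta}, using $F$-invariance $L^*\rho = \rho$ to push $\psi\circ F$ back to $\psi$ and cancel. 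Once $\int \phi_v\, d\mu = 0$, Theorem \ref{erer} (resp.\ \ref{erer2}) produces a unique solution $\psi_v \in \mathcal{B}^{-\delta}_{1,1}$ (mod constants), given by $\psi_v = -\sum_{k\geq 0} \phi_v\circ F^k$, which by uniqueness must coincide with $D^\beta \alpha_v$ up to an additive constant. This gives the series formula and the final uniqueness clause.

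The main obstacle, I expect, is the second step: making the Chain/Leibniz rules with remainder quantitatively sharp enough that $R_C(\alpha_v)$ and $R_L(\alpha_v)$, applied to an $\alpha_v$ of regularity only $s < \operatorname{Re}\beta$, still land in a function space of regularity $\gamma - \operatorname{Re}\beta - \epsilon$. A secondary obstacle is the proof that $\int \phi_v \, d\mu = 0$ without circularity, since $D^\beta \alpha_v$ is only a distribution and one cannot naively integrate the Liv\v{s}ic equation against $\mu$; the Haar-wavelet pairing with $\rho$ should bypass this, with the nonlinearity of $F$ handled precisely by the $R_C, R_L$ remainder terms rather than by a naive chain rule.
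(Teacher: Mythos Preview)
Your strategy matches the paper's: apply $D^\beta$ to the twisted equation via the Chain/Leibniz rules to obtain the Liv\v{s}ic equation for $D^\beta\alpha_v$ with source $\phi_v = g^{-\beta}\bigl(D^\beta v - R_C(\alpha_v) + R_L(\alpha_v)\bigr)$, then invoke Theorem~\ref{erer}/\ref{erer2}. There is one ordering difference worth flagging. You propose to first upgrade $\alpha_v$ to $\mathcal{B}^s_{1,1}$ (resp.\ $\mathcal{B}^s_{\infty,\infty}$) and only then apply the rules. In the $\mathcal{B}^\gamma_{\infty,\infty}$ case this is fine via Theorem~\ref{main4}, but in the $\mathcal{B}^\gamma_{1,1}$ case there is no $\mathcal{B}^s_{1,1}$-analogue of Theorem~\ref{main4} available, and the contraction argument there relies on $L^\infty$ and pointwise H\"older bounds that fail for $v\in\mathcal{B}^\gamma_{1,1}$ (such $v$ need not even be bounded). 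The paper bypasses this by reversing the bootstrap: from $\alpha_v\in L^1$ one has $\alpha_v\in\mathcal{B}^{-\delta}_{1,1}$ for every small $\delta>0$ directly; the Chain/Leibniz rules (Theorems~\ref{chain} and~\ref{leibnitz}) are stated sharply enough to accept this \emph{negative}-regularity input (take $s=-\delta$ there and check the hypotheses $s+\gamma>0$, $s-\operatorname{Re}\beta+\gamma>0$ for $\delta$ small), and they still land $R_C(\alpha_v),R_L(\alpha_v)$ in $\mathcal{B}^{\gamma-\delta-\operatorname{Re}\beta-\epsilon}_{1,1}$, hence $\phi_v\in\mathcal{B}^{\gamma-\operatorname{Re}\beta-\epsilon}_{1,1}$. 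Then Theorem~\ref{erer} gives $D^\beta\alpha_v\in\mathcal{B}^{s-\operatorname{Re}\beta}_{1,1}$, whence $\alpha_v\in\mathcal{B}^s_{1,1}$ \emph{a posteriori}. So in the paper the regularity $\alpha_v\in\mathcal{B}^s_{1,1}$ is a conclusion rather than a prerequisite; this avoids the extra lemma you would need. Your identification of $\int\phi_v\,d\mu=0$ as a subtlety is correct, and your duality argument (pair with $\rho$ and use $\Phi_g\rho=\rho$ to cancel $\langle(D^\beta\alpha_v)\circ F,\rho\rangle$ against $\langle D^\beta\alpha_v,\rho\rangle$) is exactly the right justification; the paper's proof is silent on this point.
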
 

We say that a continuous function $\alpha$ has $O$-modulus of continuity $h\colon \mathbb{R}_{> 0} \rightarrow \mathbb{R}_{> 0}$, with $\lim_{\delta\mapsto 0^+} h(\delta)=0$  at $x_0\in I$ if there is $C\geq 0$ such that 

$$|\alpha(x)-\alpha(x_0)|\leq C h(d(x,x_0)),$$
and it has {\it small  $o$-modulus of continuity} $h$ at $x_0$  if 

$$\lim_{x\rightarrow x_0}  \frac{|\alpha(x)-\alpha(x_0)|}{h(d(x,x_0))} =0.$$

\begin{mainthm}[Dichotomy and Liv\v{s}ic equation I] \label{dic11} Let $0<   \beta  < \gamma$.   Suppose that $v\in   \mathcal{B}^\gamma_{\infty,\infty}$ is a real-valued function  and  $\log g \in \mathcal{B}^\gamma_{\infty,\infty}$  Let $\alpha_v,\phi_v $  be  as in Theorem \ref{azaz}. 
The following statements are equivalent
\begin{itemize}
\item[I.] We have 
$\alpha_v \in   \mathcal{B}^{\gamma}_{\infty,\infty}$. 
\item[II.] We have 
$\alpha_v \in  \mathcal{B}^{\operatorname{Re} \beta +\epsilon}_{\infty,\infty}$ for some $\epsilon> 0$.
\item[III.] We have that $\alpha_v$ has small o-modulus of continuity $\delta^\beta$ at a  point  $x_0\in I$.
\item[IV.] We have 
$$\liminf_{n\rightarrow +\infty} \min_{P\in \mathcal{P}^n} \frac{1}{|P|^{\beta} }   \max_{\substack{x,y\in P}}|\alpha_v(y)-\alpha_v(x)|=0.$$
\item[V.] For $s> 0$ let 
\begin{equation}\label{coefa}   c_s(\alpha_v,P):= \frac{1}{|P|^s}  \int \alpha_v \phi_P \ dm.\end{equation} 
Then 
$$\lim_{n\rightarrow +\infty} \min_{P\in \mathcal{P}^n} |c_{\beta} (\alpha_v,P)| =0.$$
\item[VI.] There is $\psi\in L^1(m)$ such that 
\begin{equation} \label{liv44} \phi_v =  \psi\circ F - \psi.\end{equation} 
\item[VII.] We have   $\sigma^2(\phi_v)=0$.
\end{itemize}
\end{mainthm}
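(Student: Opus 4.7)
The plan is to prove the seven conditions equivalent by running the cycle
\[
\mathrm{I}\Rightarrow\mathrm{II}\Rightarrow\mathrm{III}\Rightarrow\mathrm{IV}\Rightarrow\mathrm{V}\Rightarrow\mathrm{VII}\Rightarrow\mathrm{VI}\Rightarrow\mathrm{I},
\]
combining elementary wavelet and H\"older comparisons for the regularity block $\mathrm{I}$--$\mathrm{V}$ with Theorems~\ref{azaz}, \ref{erer2}, and \ref{CLTweak}, together with the classical Liv\v{s}ic theorem for H\"older observables on the variance side.

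Inside the regularity block, $\mathrm{I}\Rightarrow\mathrm{II}$ is trivial, and $\mathrm{II}\Rightarrow\mathrm{III}$ is the standard fact that a $(\beta+\epsilon)$-H\"older function has small $o$-modulus $\delta^\beta$ at every point. For $\mathrm{III}\Rightarrow\mathrm{IV}$ I take the point $x_0$ supplied by III and the nested elements $P_n(x_0)\in\mathcal{P}^n$; since $|P_n(x_0)|\to 0$,
\[
\mathrm{osc}(\alpha_v,P_n(x_0))\le 2\sup_{y\in P_n(x_0)}|\alpha_v(y)-\alpha_v(x_0)|=o(|P_n(x_0)|^\beta),
\]
so the minimum over $P\in\mathcal{P}^n$ of the normalized oscillation has liminf zero. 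Finally $\mathrm{IV}\Rightarrow\mathrm{V}$ follows from the identity
\[
c_\beta(\alpha_v,P)=|P|^{-\beta}\bigl(\mathrm{avg}(\alpha_v,Q_1^P)-\mathrm{avg}(\alpha_v,Q_2^P)\bigr),
\]
which yields $|c_\beta(\alpha_v,P)|\le|P|^{-\beta}\mathrm{osc}(\alpha_v,P)$.

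For the variance block, $\mathrm{VI}\Leftrightarrow\mathrm{VII}$ is the classical Liv\v{s}ic theorem applied to $\phi_v$, which by Theorem~\ref{azaz} lies in $\mathcal{B}^{\gamma-\beta-\epsilon}_{\infty,\infty}\subset C^{\gamma-\beta-\epsilon}(I,d)$ with zero $\mu$-mean: an $L^1$ transfer function $\psi$ exists iff $\sigma^2(\phi_v)=0$, and in that case $\psi$ is H\"older of the same exponent as $\phi_v$. For $\mathrm{VI}\Rightarrow\mathrm{I}$ I invoke the uniqueness clause of Theorem~\ref{erer2}: any such $L^1$ solution must coincide with the distributional solution $D^\beta\alpha_v$ up to an additive constant, so $D^\beta\alpha_v$ is itself H\"older and hence $\alpha_v\in\mathcal{B}^{\gamma-\epsilon}_{\infty,\infty}$ for every $\epsilon>0$; combined with the a priori bounds of Theorem~\ref{main4} and a wavelet bootstrap, this gives $\alpha_v\in\mathcal{B}^{\gamma}_{\infty,\infty}$.

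The hard part will be $\mathrm{V}\Rightarrow\mathrm{VII}$, which I plan to prove by contraposition. The key identification, coming from the Haar-basis definition of $D^\beta$ together with $\psi_v=D^\beta\alpha_v+\mathrm{const}$ supplied by Theorem~\ref{azaz}, is
\[
c_\beta(\alpha_v,P) = c_0(\psi_v,P) = \mathrm{avg}(\psi_v,Q_1^P)-\mathrm{avg}(\psi_v,Q_2^P).
\]
Iterating the distributional Liv\v{s}ic relation $\psi_v\circ F-\psi_v=\phi_v$ and changing variables under the bijection $F^{n+1}\colon Q_i^P\to I$ with Jacobian $g_{n+1}$, the right-hand side rewrites as a difference of averages of the Birkhoff sum $S_{n+1}\phi_v=\sum_{j<n+1}\phi_v\circ F^j$ over the children $Q_i^P$. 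In the martingale decomposition of Theorem~\ref{CLTweak}, the quantity $c_\beta(\alpha_v,P^k(x))\phi_{P^k(x)}(x)$ is precisely the $k$-th martingale increment approximating $S_k\phi_v$, whose total asymptotic variance is $\sigma^2(\phi_v)$. The main obstacle is to upgrade this global variance nondegeneracy $\sigma^2(\phi_v)>0$ to a \emph{uniform-in-$P$} positive lower bound on $|c_\beta(\alpha_v,P)|$ for $P\in\mathcal{P}^n$ with $n$ large; I plan to extract it from the spectral gap of the transfer operator on $\mathcal{B}^{\gamma-\beta-\epsilon}_{\infty,\infty}$ and from thermodynamical-formalism control over conditional variances of the increments, which together should preclude any collapse of $|c_\beta(\alpha_v,P)|$ along a sequence of partition elements and thus contradict $\mathrm{V}$.
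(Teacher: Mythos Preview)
Your chain $\mathrm{I}\Rightarrow\mathrm{II}\Rightarrow\mathrm{III}\Rightarrow\mathrm{IV}\Rightarrow\mathrm{V}$ and the Liv\v{s}ic block $\mathrm{VI}\Leftrightarrow\mathrm{VII}$ are fine and match the paper. The substantive divergence is in how you close the cycle. The paper proves $\mathrm{V}\Rightarrow\mathrm{I}$ \emph{directly} by a wavelet--coefficient iteration: from $v=\alpha\circ F-g^\beta\alpha$ and Lemma~\ref{ite3} one derives
\[
|c_\gamma(\alpha,F(P))|\le \Bigl(\tfrac{|P|}{|F(P)|}\Bigr)^{\gamma-\beta}|c_\gamma(\alpha,P)|+C,
\]
so that if $\alpha\notin\mathcal{B}^\gamma_{\infty,\infty}$ (some $|c_\gamma(\alpha,Q)|>\Lambda$), propagating along preimages forces $|c_\beta(\alpha,\cdot)|\ge L|Q|^{\gamma-\beta}>0$ on elements at every deep level, contradicting $\mathrm{V}$. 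The remaining links $\mathrm{I}\Rightarrow\mathrm{VI}\Rightarrow\mathrm{VII}\Rightarrow\mathrm{II}$ then close the equivalence.

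Your route $\mathrm{V}\Rightarrow\mathrm{VII}\Rightarrow\mathrm{VI}\Rightarrow\mathrm{I}$ has two genuine gaps. First, the contrapositive $\sigma^2(\phi_v)>0\Rightarrow\lnot\mathrm{V}$ demands a \emph{uniform} lower bound $|c_\beta(\alpha_v,P)|\ge\delta>0$ on \emph{every} $P$ at all sufficiently deep levels. The CLT of Theorem~\ref{CLTweak} controls the distribution of the \emph{sum} $\sum_{k<n}c_\beta(\alpha_v,P^k(x))\phi_{P^k(x)}(x)$, not individual increments, and the appeal to ``thermodynamical-formalism control over conditional variances'' is not a concrete mechanism for producing a pointwise-in-$P$ lower bound. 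In fact the only place in the paper where this uniform bound appears is precisely the coefficient iteration above, which uses the twisted equation directly and never passes through $\sigma^2$. Second, even granting $\mathrm{V}\Rightarrow\mathrm{VII}$, your $\mathrm{VI}\Rightarrow\mathrm{I}$ step only yields $D^\beta\alpha_v\in\mathcal{B}^{\gamma-\beta-\epsilon}_{\infty,\infty}$, since by Theorem~\ref{azaz} the observable $\phi_v$ is only known to lie in $\bigcap_{\epsilon>0}\mathcal{B}^{\gamma-\beta-\epsilon}_{\infty,\infty}$ and Liv\v{s}ic regularity returns no more than the regularity of $\phi_v$. Hence you get $\alpha_v\in\bigcap_{\epsilon>0}\mathcal{B}^{\gamma-\epsilon}_{\infty,\infty}$, and the inclusion $\bigcap_{\epsilon>0}\mathcal{B}^{\gamma-\epsilon}_{\infty,\infty}\supsetneq\mathcal{B}^\gamma_{\infty,\infty}$ is strict; Theorem~\ref{main4} does not supply the ``wavelet bootstrap'' you invoke. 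You need the direct $\mathrm{V}\Rightarrow\mathrm{I}$ iteration to actually reach $\mathcal{B}^\gamma_{\infty,\infty}$.
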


Todorov~\cite[Theorem~1]{todorov} obtained a dichotomy result for solutions of the twisted cohomological equation in a more regular setting. 
In contrast, Theorem~\ref{dic11} yields sharper conclusions and, more importantly, establishes a new connection between this dichotomy, the Livšic cohomological equation, and the asymptotic variance that will be central to obtain our main  results.  Furthermore, our methods allow us to treat observables $v$ with far lower regularity, as described in the theorem below.

\begin{figure}[h]
\includegraphics[scale=0.34]{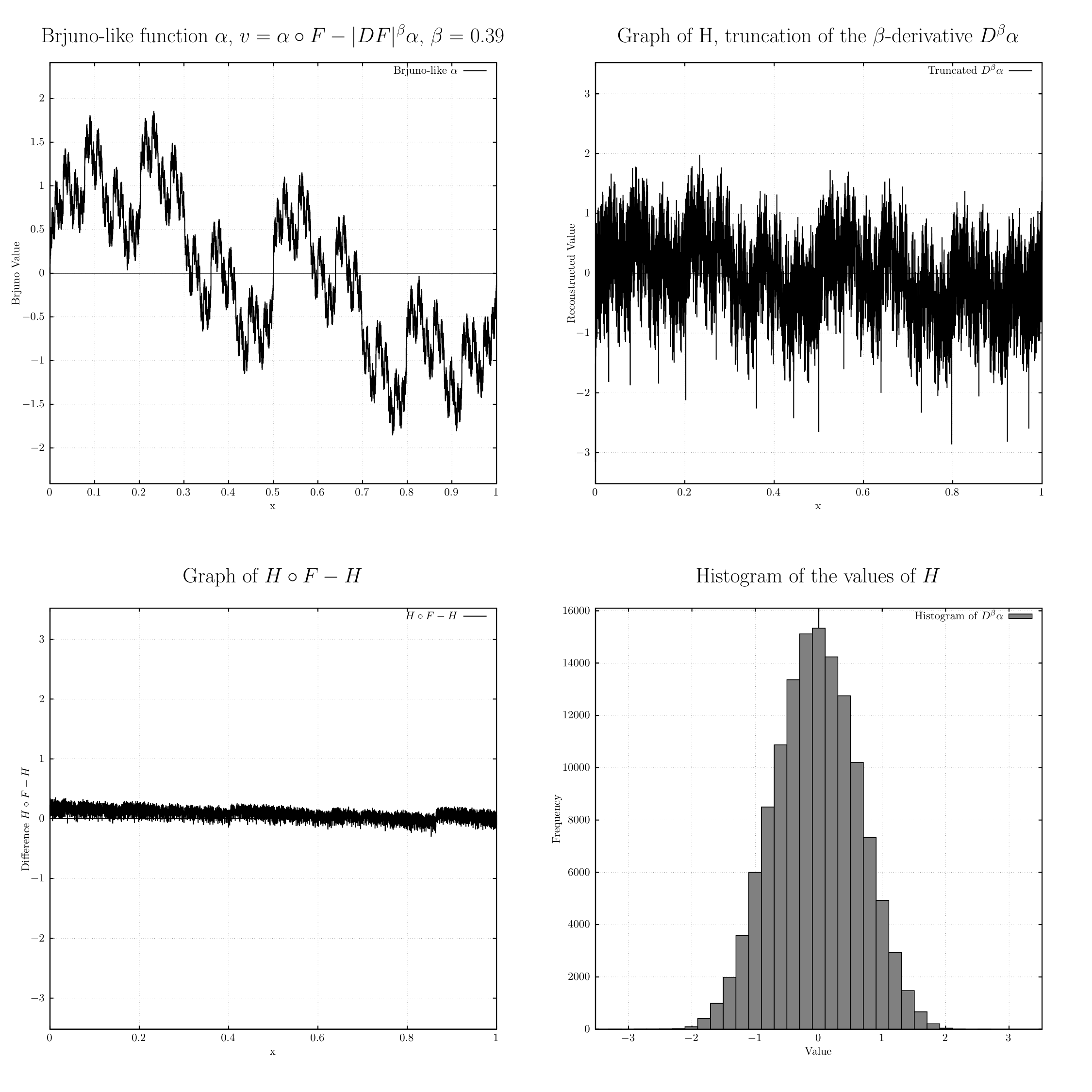}
\caption{}
\end{figure}

\begin{remark}
It is fairly easy to implement unbalanced Haar wavelets and the associated fractional derivatives on a modern computer. We implemented the algorithms in C++ to illustrate some of the results of this work. In Figure 1 we consider the expanding map on the circle
$$F(x)= 2x + 0.1\sin(2\pi x) \mod 1 $$
with $v(x)= \sin(2\pi x)$ and $\beta=0.39$. 

In this case it appears that $\sigma(\phi_v) > 0$, where the function $\phi_v$ is as in Theorem \ref{azaz}, since the solution $\alpha=\alpha_v$ behaves quite wildly. We have that  $D^\beta \alpha$ is a distribution and is (up to a constant) equal to the infinite Birkhoff sum
$$
-\sum_k \phi_v \circ F^k .
$$

Nevertheless, one can compute numerically $\alpha$, its Haar decomposition, and consequently the Haar decomposition of its $\beta$-fractional derivative $D^\beta \alpha$, up to a finite level $n$ ($n = 17$ in this figure). This defines a {\it function} $H$, whose values, as predicted by Theorem \ref{CLTweak} for the case $\sigma(\phi_v)> 0$, exhibit a probabilistic distribution close to a normal distribution (see Figure 1). The rate of convergence is not fast, likely of order $1/\sqrt{n}$.

The same theorem tells us that (up to addition by a constant)
$$
H(x)\sim -  \sum_{k=0}^{n-1}\phi_v(F^k(x)),
$$
and in particular the local oscillations of
$$
H(F(x)) - H(x) = -\phi_v(F^n(x)) + \phi_v(x)
$$
must be much smaller than the local oscillations of $H(x)$, which is indeed observed in the experiment.
\end{remark}

\begin{mainthm}[Dichotomy and Liv\v{s}ic equation II] \label{dic1} Let $0< \operatorname{Re}  \beta  < \gamma$. Suppose that $v\in \mathcal{B}^\gamma_{1,1}$, with $\gamma -\operatorname{Re} \beta > 1/2$, is a real-valued function  and  $\log g \in \mathcal{B}^\gamma_{\infty,\infty}$.  Let $\alpha_v,\phi_v $  be  as in Theorem \ref{azaz}. 
The following statements are equivalent
\begin{itemize}
\item[I.] We have 
$\alpha_v \in \mathcal{B}^{\gamma -\delta}_{1,1}$ for every $\delta > 0$. 
\item[II.] We have 
$\alpha_v \in \mathcal{B}^{\beta+\delta}_{1,1}$, for  some $\delta > 0$.
\item[III.] There is a set of points $S$ with positive measure such that if $x\in S$ we have  
$$\lim_n   \frac{1}{\sqrt{n} }\sum_{j\leq n }    \frac{\phi_{P^j(x)}(x)}{|P^j(x)|^{\beta}} \int \alpha_v(y) \phi_{P^j(x)}(y) dm(y)  =0.$$
\item[IV.] There is $\psi\in L^2(m)$ such that 
$$\phi_v =  \psi\circ F - \psi.$$
\item[V.] We have   $\sigma^2(\phi_v)=0$.
\end{itemize}
\end{mainthm}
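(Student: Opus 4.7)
The plan is to establish the cyclic chain $\mathrm{V}\Leftrightarrow\mathrm{IV}\Rightarrow\mathrm{I}\Rightarrow\mathrm{II}\Rightarrow\mathrm{IV}$, complemented by $\mathrm{V}\Leftrightarrow\mathrm{III}$ through a Central Limit Theorem argument. The central ingredient is that Theorem~\ref{azaz} realises $\phi_v$ as a genuine function of zero mean in $\mathcal{B}^{\gamma-\operatorname{Re}\beta-\epsilon}_{1,1}$; the hypothesis $\gamma-\operatorname{Re}\beta>1/2$ then embeds $\phi_v$ into $L^2$ via $\mathcal{B}^s_{1,1}(\mathbb{S}^1)\hookrightarrow L^2$ for $s\geq 1/2$, so the standard spectral-gap and thermodynamical formalism for expanding Markov maps is available.

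The equivalence $\mathrm{IV}\Leftrightarrow\mathrm{V}$ is the classical one: by the spectral gap of the transfer operator on $\mathcal{B}^{\gamma-\operatorname{Re}\beta-\epsilon}_{1,1}$ and the Green--Kubo formula, $\phi_v$ is an $L^2$-coboundary iff $\sigma^2(\phi_v)=0$. For $\mathrm{IV}\Rightarrow\mathrm{I}$ I would take an $L^2$-solution $\psi$ and upgrade its regularity by the standard Liv\v{s}ic bootstrap through contracting inverse branches of $F$, obtaining $\psi\in\mathcal{B}^{\gamma-\operatorname{Re}\beta-\epsilon}_{1,1}$; the distributional uniqueness clause of Theorem~\ref{azaz} then identifies $\psi$ with $D^\beta\alpha_v$ modulo a constant. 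Since $D^\beta\colon\mathcal{B}^{\gamma-\epsilon}_{1,1}\to\mathcal{B}^{\gamma-\operatorname{Re}\beta-\epsilon}_{1,1}$ is continuous, surjective, and has kernel equal to the constants, this places $\alpha_v$ (up to an additive constant) in $\mathcal{B}^{\gamma-\epsilon}_{1,1}$, proving $\mathrm{I}$. The implication $\mathrm{I}\Rightarrow\mathrm{II}$ is immediate upon choosing $\epsilon,\delta>0$ with $\epsilon+\delta<\gamma-\operatorname{Re}\beta$. For $\mathrm{II}\Rightarrow\mathrm{IV}$, if $\alpha_v\in\mathcal{B}^{\operatorname{Re}\beta+\delta}_{1,1}$ then $D^\beta\alpha_v\in\mathcal{B}^\delta_{1,1}\subset L^1$ is a genuine $L^1$-function solving the Liv\v{s}ic equation $\phi_v=\psi\circ F-\psi$, and the same bootstrap upgrades it to $\mathcal{B}^{\gamma-\operatorname{Re}\beta-\epsilon}_{1,1}\subset L^2$.

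For $\mathrm{V}\Leftrightarrow\mathrm{III}$ I would begin with the algebraic identity
\[
c_0(D^\beta\alpha_v,P)=\int (D^\beta\alpha_v)\,\phi_P\,dm=|P|^{-\operatorname{Re}\beta}\int\alpha_v\,\phi_P\,dm=c_\beta(\alpha_v,P),
\]
which follows from the definition of $D^\beta$ on Haar expansions and from pairwise orthogonality of the $\phi_P$ across scales. Hence the sum in $\mathrm{III}$ is exactly $(1/\sqrt{n})\sum_{k<n}c_0(\psi,P^k(x))\phi_{P^k(x)}(x)$, the renormalized martingale approximation of $\psi=D^\beta\alpha_v$ featured in Theorem~\ref{CLTweak}. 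If $\sigma^2(\phi_v)=0$, then by $\mathrm{IV}\Rightarrow\mathrm{I}$ the distribution $\psi$ is actually an $L^2$-function, so by martingale convergence its partial Haar sums converge almost everywhere to $\psi(x)<\infty$, and dividing by $\sqrt{n}$ yields $\mathrm{III}$ on a set of full measure. Conversely, if $\sigma^2(\phi_v)>0$, Theorem~\ref{CLTweak} asserts weak convergence of the renormalized sums to a non-degenerate Gaussian, which has no atom at zero; a standard Fatou/liminf argument applied to $\{x:|S_n(x)/\sqrt{n}|\leq\varepsilon\}$ as $\varepsilon\to 0^+$ then forces the set where the limit equals zero to have measure zero, contradicting $\mathrm{III}$.

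The principal obstacle I anticipate is the Liv\v{s}ic regularity bootstrap inside the unbalanced-Haar Besov scale: the classical Hölder arguments based on summation along contracting inverse branches must be reformulated for the $\mathcal{B}^s_{1,1}$-spaces defined via Haar wavelets, and the uniqueness of the distributional Liv\v{s}ic solution from Theorem~\ref{azaz} must be invoked carefully to identify each $L^2$-coboundary with $D^\beta\alpha_v$ modulo a constant.
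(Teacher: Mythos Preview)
Your overall strategy and the paper's share the same key tools---Theorem~\ref{azaz}, Theorem~\ref{CLTweak}, the identity $c_0(D^\beta\alpha_v,P)=c_\beta(\alpha_v,P)$, and the contrapositive $\mathrm{III}\Rightarrow\mathrm{V}$ via the CLT argument you sketch. The difference lies in how the cycle is closed, and the obstacle you anticipate is real and should be avoided rather than confronted.

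The Liv\v{s}ic bootstrap ``through contracting inverse branches'' that you invoke for $\mathrm{IV}\Rightarrow\mathrm{I}$ and again for $\mathrm{II}\Rightarrow\mathrm{IV}$ is genuinely problematic in the scale $\mathcal{B}^s_{1,1}$: these functions need not be continuous (or even bounded), so the pointwise comparison along inverse branches that drives the classical H\"older argument is unavailable. The paper sidesteps this twice. First, for $\mathrm{V}\Rightarrow\mathrm{I}$ (and simultaneously $\mathrm{V}\Rightarrow\mathrm{IV}$) it invokes Theorem~\ref{varthm}, whose proof \emph{constructs} the coboundary $w=\sum_{k\ge1}T^k\phi_v\in\mathcal{B}^{\gamma-\operatorname{Re}\beta-\epsilon}_{1,1}$ directly from the spectral gap of the normalized transfer operator on that very Besov space; no upgrade from an a~priori $L^2$ solution is needed, and the distributional uniqueness in Theorem~\ref{azaz} then identifies $w$ with $D^\beta\alpha_v$. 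Second, instead of your $\mathrm{II}\Rightarrow\mathrm{IV}$, the paper proves the elementary $\mathrm{II}\Rightarrow\mathrm{III}$: if $\alpha_v\in\mathcal{B}^{\operatorname{Re}\beta+\delta}_{1,1}$ then along the nested chain $(P^n(x))_n$ one has
\[
\sum_n\bigl|c_\beta(\alpha_v,P^n(x))\bigr|\le\sum_n|P^n(x)|^\delta\,\bigl|c_{\beta+\delta}(\alpha_v,P^n(x))\bigr|<\infty,
\]
so the expression in $\mathrm{III}$ is a partial sum of an absolutely convergent series divided by $\sqrt{n}$, hence tends to zero for \emph{every} $x$. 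The resulting cycle is $\mathrm{I}\Rightarrow\mathrm{II}\Rightarrow\mathrm{III}\Rightarrow\mathrm{V}\Rightarrow\mathrm{I}$, together with $\mathrm{IV}\Leftrightarrow\mathrm{V}$ from Theorem~\ref{varthm}; your separate $\mathrm{V}\Rightarrow\mathrm{III}$ via martingale convergence is correct but becomes redundant once the chain closes this way.
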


\subsection{Dichotomies in real analytic families and Banach spaces of functions}

We are ready to state the main general  dichotomies results. The main innovation here is that $F$ is not required to be piecewise linear, and furthermore the functions $v_t\in \mathcal{B}^\gamma_{1,1}$ need not be continuous, or even bounded.

\begin{mainthm}[Dichotomy for analytic families]\label{dic2} Let $0< \beta  < \gamma$.  Let $J$ be an open interval and $t\in J\mapsto v_t$ be a real analytic family of real-valued functions $v_t\in  \mathcal{B}^\gamma_{\infty,\infty}$ (respec. $v_t\in  \mathcal{B}^\gamma_{1,1}$ with   $\gamma -\beta > 1/2$). Suppose that $\log g \in \mathcal{B}^\gamma_{\infty,\infty}$, with $s,\beta \in (0,\gamma)$.  Then either for all $t$ the solution $\alpha_t$ of  
$(\ref{ch2})$ satisfies $I$-$VII$ in Theorem \ref{dic11}  ( respec.  $I$-$V$ in Theorem  \ref{dic1}) or this holds only for a countable number of isolated parameters. 
\end{mainthm}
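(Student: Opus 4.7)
The plan is to reduce the dichotomy to the identity principle for a single real-analytic function on $J$. By Theorem~\ref{dic11} (in the $\mathcal{B}^\gamma_{\infty,\infty}$ case) or Theorem~\ref{dic1} (in the $\mathcal{B}^\gamma_{1,1}$ case with $\gamma-\beta>1/2$), the equivalent conditions characterising the smooth alternative at a parameter $t$ all amount to the vanishing of the asymptotic variance $\sigma^2(\phi_{v_t})$, where $\phi_{v_t}$ is the function produced by Theorem~\ref{azaz}. Hence it suffices to show that the map
\[
  \Sigma\colon t \in J \longmapsto \sigma^2(\phi_{v_t}) \in \mathbb{R}
\]
is real-analytic; then either $\Sigma \equiv 0$ on $J$ (giving the first alternative) or $\Sigma^{-1}(0)$ is a discrete, hence countable, subset of $J$ (giving the second).

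First I would verify that $t \mapsto \phi_{v_t}$ is real-analytic into a suitable Besov space. By Theorem~\ref{main4} the solution operator $v \mapsto \alpha_v$ is bounded and linear between the appropriate spaces, and the fractional derivative $D^\beta$ together with the Koopman operator $\theta \mapsto \theta \circ F$ are also bounded and linear on the Besov scale. The identity
\[
  \phi_v = (D^\beta \alpha_v)\circ F - D^\beta \alpha_v
\]
from Theorem~\ref{azaz} therefore realises $v \mapsto \phi_v$ as a single bounded linear operator into $\mathcal{B}^{\gamma-\beta-\epsilon}_{\infty,\infty}$ or $\mathcal{B}^{\gamma-\beta-\epsilon}_{1,1}$ for small $\epsilon>0$ (the assumption $\gamma-\beta>1/2$ in the second case places $\phi_{v_t}$ in $L^2$, which is what the rest of the argument requires). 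Composition of a real-analytic family with a fixed bounded linear operator preserves real-analyticity.

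Next I would show that $\phi \mapsto \sigma^2(\phi)$ is a bounded quadratic form on the closed subspace of zero-mean elements of that Besov space, and hence real-analytic. Writing $L_1$ for the transfer operator associated to $\mu$, which has a simple leading eigenvalue $1$ with eigenvector $\rho$ and a spectral gap on the relevant Besov spaces (as already used in Theorem~\ref{dic333} and in \cite{mms}), the Green--Kubo expansion
\[
  \sigma^2(\phi) = \int \phi^2\,\rho\, dm + 2 \sum_{k=1}^\infty \int \phi\,(\phi\circ F^k)\,\rho\, dm
\]
is rewritten through the duality $\int \phi\,(\phi\circ F^k)\,\rho\, dm = \int \phi\, L_1^k(\phi\rho)\, dm$ and summed as
\[
  \sigma^2(\phi) = \int \phi^2\,\rho\, dm + 2 \int \phi\, L_1\,(I - L_1|_{H})^{-1}(\phi\rho)\, dm,
\]
where $H$ denotes the $L_1$-invariant complement of $\mathbb{R}\rho$. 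Spectral gap makes $(I - L_1|_H)^{-1}$ bounded, so $\sigma^2$ is a bounded quadratic form on the zero-mean subspace, hence real-analytic. The composition $\Sigma = \sigma^2 \circ (t\mapsto \phi_{v_t})$ is therefore real-analytic on $J$, and the identity theorem concludes the argument.

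The main obstacle is to carry out the Green--Kubo inversion on a function space simultaneously compatible with the Besov regularity $\gamma-\beta-\epsilon$ of the observables $\phi_{v_t}$ and with the spectral gap of $L_1$, in a way uniform for $t$ in compact subsets of $J$. This is precisely why Theorems~\ref{dic11} and~\ref{dic1} were stated so that $\phi_v$ falls into a space on which $L_1$ has spectral gap; once this is arranged, real-analyticity of $\Sigma$ follows from the uniform bounds of Theorem~\ref{main4} combined with standard perturbation theory for isolated eigenvalues applied to $L_1$.
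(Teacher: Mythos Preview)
Your proposal is correct and follows essentially the same line as the paper's proof: reduce the dichotomy to the vanishing of the single real-analytic function $t\mapsto\sigma^2(\phi_{v_t})$, establish that $v\mapsto\phi_v$ is a bounded linear map into a Besov space where the variance is defined, and then invoke real-analyticity of $\phi\mapsto\sigma^2(\phi)$.

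Two small differences in execution are worth noting. First, for the regularity of $\phi_v$ the paper works with the explicit formula $\phi_v=(D^\beta v-R_C(\alpha_v)+R_L(\alpha_v))/g^\beta$ coming from the Chain and Leibniz rules (Theorems~\ref{chain2}, \ref{leibnitz}, \ref{leibnitz1}), which makes the target space $\mathcal{B}^{\gamma-\beta-\epsilon}$ and the boundedness of $v\mapsto\phi_v$ immediate; you instead cite Theorem~\ref{azaz}, which is fine but note that the composition $D^\beta$-then-Koopman alone only lands in a negative-index space, and it is really the Chain/Leibniz content behind Theorem~\ref{azaz} that upgrades the target. Second, for analyticity of $\sigma^2$ the paper uses Proposition~\ref{ana}, which writes $\sigma^2(\psi)=|h|^2_{L^2(\rho m)}$ with $h=\psi-(w\circ F-w)$ and $w=\sum_{k\ge1}T^k\psi$, whereas you use the Green--Kubo series summed via $(I-L_1|_H)^{-1}$; these are two standard routes to the same bounded quadratic form and either is acceptable.
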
 

\begin{mainthm}[Dichotomy in Banach Spaces of Functions]\label{dic4} Let $0< \beta  < \gamma$.  Let $\mathcal{B}$ be a Banach space of  functions continously embedded in  $\mathcal{B}^\gamma_{\infty,\infty}$ (resp. $\mathcal{B}^\gamma_{1,1}$).  Suppose there is $w\in \mathcal{B}$ such that  the solution $\alpha_w$ does not satisfy  $I$-$VII$ in Theorem \ref{dic11}  ( respec.  $I$-$V$ in Theorem  \ref{dic1}). Then 
\begin{itemize} 
\item[A.]  For a residual set of  functions $$t\in [0,1]^d\mapsto v_t$$  in  $C^k([0,1]^d,\mathcal{B})$,  with $k\in \mathbb{N}$ and $d\geq 1$, the set  of all $t\in J$ such that the solution $\alpha_t$ of  
$(\ref{ch2})$  satisfy $I$-$V$ in Theorem \ref{dic11}  ( respec.  $I$-$V$ in Theorem  \ref{dic1})  has zero Lebesgue measure.
\item[B.]  The set of functions in $\mathcal{B}$ that satisfy   $I$-$V$ in Theorem \ref{dic11}  ( respec.  $I$-$V$ in Theorem  \ref{dic1}) is a closed set with empty interior.
\end{itemize} 

Moreover  if $\log g \in \mathcal{B}^{\gamma+\delta}_{\infty,\infty}$, with $\delta > 0$,  then   the assumptions of this theorem are satisfied by $\mathcal{B}^\gamma_{\infty,\infty}$ and  $\mathcal{B}^\gamma_{1,1}$.
\end{mainthm}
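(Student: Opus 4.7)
The strategy is to reduce the theorem to properties of the continuous nonnegative quadratic form $Q(v):=\sigma^2(\phi_v)$ on $\mathcal{B}$ and then to apply standard Baire-category and Fubini arguments. By Theorems~\ref{dic11} and~\ref{dic1}, conditions (I)--(VII) (respectively (I)--(V)) are all equivalent to $\sigma^2(\phi_v)=0$. Since $v\mapsto \alpha_v$ is bounded and linear from $\mathcal{B}$ into the appropriate Besov space (Theorem~\ref{main4}), and $\alpha\mapsto \phi=(D^\beta\alpha)\circ F-D^\beta\alpha$ is continuous and linear into a H\"older/Besov space on which $\sigma^2$ extends to a continuous nonnegative quadratic form (by the Green--Kubo formula and the spectral gap of the transfer operator, cf.\ Theorem~\ref{azaz}), the composition $Q$ is a continuous nonnegative quadratic form on $\mathcal{B}$. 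Let $B$ be its associated symmetric bilinear form, and let $w\in \mathcal{B}$ with $Q(w)>0$ be furnished by hypothesis.

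For part (B), the zero set $\mathcal{K}:=\{v:Q(v)=0\}$ is closed by continuity of $Q$. The Cauchy--Schwarz inequality for positive semi-definite bilinear forms gives $B(v_1,v_2)^2\leq Q(v_1)Q(v_2)=0$ for every $v_1\in\mathcal{K}$ and $v_2\in\mathcal{B}$, so $B(v_1,\cdot)\equiv 0$ and hence $Q(\lambda v_1+v_2)=Q(v_2)$ for every $\lambda\in\mathbb{R}$. Thus $\mathcal{K}$ is a linear subspace of $\mathcal{B}$, proper because $w\notin\mathcal{K}$, and a proper closed subspace of a Banach space has empty interior.

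For part (A), let $\mathcal{E}_N:=\{(v_t)\in C^k([0,1]^d,\mathcal{B}):|\{t:Q(v_t)=0\}|\geq 1/N\}$, so the bad set in the statement is contained in $\bigcup_N \mathcal{E}_N$. To see $\mathcal{E}_N$ is closed, note that $C^k$-convergence $v^{(m)}\to v$ yields uniform convergence of $t\mapsto Q(v_t^{(m)})$, so for every $\delta>0$ eventually $\{t:Q(v_t^{(m)})=0\}\subseteq\{t:Q(v_t)\leq\delta\}$; letting $\delta\downarrow 0$ and using continuity from above of Lebesgue measure gives $|\{t:Q(v_t)=0\}|\geq 1/N$. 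To prove $\mathcal{E}_N$ is nowhere dense, I perturb by the constant family $sw$: for each fixed $t$,
\[
s\longmapsto Q(v_t+sw)=Q(v_t)+2sB(v_t,w)+s^2Q(w)
\]
is a nonnegative polynomial of degree two with positive leading coefficient, hence has at most one real zero. Consequently $\{(t,s):Q(v_t+sw)=0\}\subset[0,1]^d\times\mathbb{R}$ has $(d+1)$-dimensional Lebesgue measure zero, and Fubini produces a full-measure set of arbitrarily small $s$ for which the perturbed family $(v_t+sw)$ lies outside every $\mathcal{E}_N$, establishing nowhere-denseness.

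The main obstacle is the \emph{moreover} clause, which requires exhibiting $w\in\mathcal{B}^\gamma_{\infty,\infty}$ (resp.\ $\mathcal{B}^\gamma_{1,1}$) with $Q(w)>0$, without access to the Fourier-analytic tools available when $F$ is linear. My plan is to combine the real-analyticity and spectral data of the twisted transfer operator $L_{1+\beta}$ from Theorem~\ref{dic333}(C) with the classical fact, deduced from the Ruelle--Parry--Pollicott theorem together with the Liv\v{s}ic regularity theorem, that for a topologically mixing $C^{1+\gamma+\delta}$ expanding map the subspace of H\"older coboundaries has infinite codimension in $\mathcal{B}^\gamma$. I would then show that the linear map $v\mapsto \phi_v$ has sufficiently rich image in the relevant H\"older space (by inverting $D^\beta$ modulo constants via Theorem~\ref{erer} and the invertibility of $\alpha\mapsto\alpha\circ F-g^\beta\alpha$ from Theorem~\ref{main4}, a step facilitated by the extra regularity $\log g\in\mathcal{B}^{\gamma+\delta}_{\infty,\infty}$), so that its image cannot be entirely contained in the coboundary subspace, yielding the desired $w$.
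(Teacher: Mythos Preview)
Your argument for parts (A) and (B) is correct and follows essentially the same strategy as the paper: reduce everything to the continuous functional $v\mapsto\sigma^2(\phi_v)$, perturb an arbitrary family by $sw$, and apply Fubini. Your framing via the quadratic form $Q$ and its associated bilinear form $B$ is in fact slightly cleaner: you observe directly that $s\mapsto Q(v_t+sw)$ is a degree-two polynomial with positive leading coefficient and hence at most one real zero, whereas the paper treats the cases $\sigma^2(\psi_{v_t})=0$ and $\sigma^2(\psi_{v_t})>0$ separately (using the same quadratic identity in the first case) and then invokes Theorem~\ref{dic2} to conclude. For (B) your ``proper closed linear subspace'' argument is also tidier than the paper's direct perturbation, though both rest on the same Cauchy--Schwarz consequence.

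For the \emph{moreover} clause your plan points in the right direction but remains a sketch, and the missing step is precisely the regularity bookkeeping. The paper carries out explicitly the inversion you describe: pick $\phi\in\mathcal{B}^{\gamma+\delta}_{\infty,\infty}$ with zero mean and nonzero value at the fixed point of $F$ (so $\phi$ is not a coboundary and $\sigma^2(\phi)>0$); let $\psi\in\bigcap_{s>0}\mathcal{B}^{-s}_{\infty,\infty}$ be its distributional Liv\v{s}ic solution (Theorem~\ref{erer2}); set $\alpha:=D^{-\beta}\psi\in\mathcal{B}^{\beta-s}_{\infty,\infty}$; and define $w:=\alpha\circ F-g^\beta\alpha$. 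Applying the Chain and Leibniz rules (Theorems~\ref{chain2} and~\ref{leibnitz1}) to compute $D^\beta w$, the main term is $g^\beta(\psi\circ F-\psi)=g^\beta\phi$ and the error terms $R_C(\alpha),R_L(\alpha)$ land in $\mathcal{B}^{\gamma+\delta-\beta-\epsilon}_{\infty,\infty}$ because $\log g\in\mathcal{B}^{\gamma+\delta}_{\infty,\infty}$. Hence $D^\beta w\in\mathcal{B}^{\gamma+\delta-\beta-\epsilon}_{\infty,\infty}$ and $w\in\mathcal{B}^{\gamma+\delta/2}_{\infty,\infty}\subset\mathcal{B}^\gamma_{\infty,\infty}\cap\mathcal{B}^\gamma_{1,1}$, with $\phi_w=\phi$ by construction. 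The extra $\delta$ is needed exactly so that this chain of inclusions closes; your outline does not make this explicit, and without it one cannot conclude $w\in\mathcal{B}^\gamma$.
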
 

\begin{mainthm}[Dichotomy with respect to $\beta$] \label{dic3}   Let $0< \beta  < \gamma$.  Given  a real analytic function $\beta\in J=(0,1)\mapsto v_\beta \in  \mathcal{B}^\gamma_{\infty,\infty}$ (respec. $\beta \in J=(0, \gamma-1/2)\mapsto v_\beta \in  \mathcal{B}^\gamma_{1,1}$).   Then either for all $\beta\in J$ the solution $\alpha_\beta$ of  
$(\ref{ch2})$ belongs to $\mathcal{B}^{\gamma}_{\infty,\infty}$ ( respec. $\mathcal{B}^{\gamma -\delta}_{1,1}$, for all small $\delta> 0$) or this holds only for a countable  number of isolated parameters $\beta$. Moreover for $v_\beta \in  \mathcal{B}^\gamma_{\infty,\infty}$ those parameters do not accumulate to $0$.
\end{mainthm}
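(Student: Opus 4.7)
The plan is to reduce the dichotomy to the vanishing of a single real-analytic function of $\beta$, namely the asymptotic variance $\sigma^2(\phi_\beta)$ of the Livšic observable $\phi_\beta$ associated to the twisted problem via fractional derivatives, and then invoke the identity theorem for real-analytic functions of one real variable.

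First, by Theorem \ref{dic11} (in the $\mathcal{B}^\gamma_{\infty,\infty}$ case) and Theorem \ref{dic1} (in the $\mathcal{B}^\gamma_{1,1}$ case with $\gamma-\beta>1/2$), the maximal-regularity condition for $\alpha_\beta$ is equivalent to $\sigma^2(\phi_\beta)=0$, where $\phi_\beta$ is the function produced by Theorem \ref{azaz}. It therefore suffices to prove that
$$\Sigma(\beta) := \sigma^2(\phi_\beta)$$
is a real-analytic function of $\beta\in J$, and to control its behaviour near $\beta=0$ in the $\mathcal{B}^\gamma_{\infty,\infty}$ case.

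The central step is to show that $\beta \mapsto \phi_\beta$ is a real-analytic curve in a Besov space on which the transfer operator $L_1$ has a spectral gap. Writing
$$\phi_\beta = \frac{D^\beta v_\beta + R_C(\alpha_\beta)-R_L(\alpha_\beta)}{(DF)^\beta},$$
each ingredient depends analytically on $\beta$: the family $\beta\mapsto v_\beta$ is analytic by hypothesis; $D^\beta$ acts on Haar coefficients by multiplication by $|P|^{-\beta}$, which is entire in $\beta$; the twisting weight $\beta\mapsto g^\beta$ is real-analytic into $\mathcal{B}^\gamma_{\infty,\infty}$ since that space is closed under exponentials; and the solution map $v_\beta\mapsto \alpha_\beta$ factors through the resolvent of the family $L_{1+\beta}$ from Theorem \ref{dic333}, which varies analytically in $\beta$ by Kato perturbation theory applied to an isolated piece of the spectrum. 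Combining these facts with the spectral-gap representation
$$\sigma^2(\phi_\beta) = \int \phi_\beta^2\,\rho\,dm \;+\; 2\int \phi_\beta\cdot\psi_{\phi_\beta}\,\rho\,dm,$$
where $\psi_{\phi_\beta}$ is the distributional Livšic solution furnished by Theorems \ref{erer}/\ref{erer2} (so $\psi_{\phi_\beta}=(L_1-\mathrm{Id})^{-1}$ on the complement $V_0$ of the leading eigenspace, applied to a quantity analytic in $\beta$), one concludes that $\Sigma$ is real-analytic on $J$. The identity theorem then gives the desired dichotomy: either $\Sigma\equiv 0$ on $J$, yielding maximal regularity for every $\beta$, or the zero set of $\Sigma$ is a discrete, hence countable, subset of $J$. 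For the additional claim in the $\mathcal{B}^\gamma_{\infty,\infty}$ case, Theorem \ref{dic333}(C) provides analytic extensions of $\lambda_{1+\beta},\rho_{1+\beta},m_{1+\beta}$ across $\beta=0$, so $\Sigma$ extends real-analytically to an open neighbourhood of $[0,\gamma)$; its zero set is therefore either the entire interval or locally finite near $0$, ruling out accumulation at $0$.

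The main obstacle will be the uniform analyticity of $\Sigma$. Two delicate points must be handled carefully. First, the natural Besov regularity of $\phi_\beta$, namely $\gamma-\beta-\epsilon$, moves with $\beta$, so one must work on each compact sub-interval $K\subset J$ inside a single fixed Besov space in which $L_1$ has a spectral gap and verify that $\beta\mapsto\phi_\beta$ is analytic as a map into that fixed space. Second, for $\beta$ near $0$ one has to normalize $L_{1+\beta}$ by its leading eigenvalue $\lambda_{1+\beta}$ and use the analytic eigendata of Theorem \ref{dic333}(C) to express the resolvent on the spectral complement in an analytic fashion, so that the formula for $\Sigma$ admits an analytic extension through $\beta=0$. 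Once these two points are in place, the dichotomy and the non-accumulation statement follow immediately from the identity theorem.
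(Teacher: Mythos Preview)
Your outline for the first part of the proof (the dichotomy on $J$) is essentially the paper's strategy: reduce to the real-analyticity of $\beta\mapsto\sigma^2(\phi_\beta)$ and invoke the identity theorem. Your tactical choice of obtaining the analyticity of $\alpha_\beta$ via the resolvent of the operator $T_\beta\alpha=(\alpha\circ F)/g^\beta$ (whose dual is $L_{1+\beta}$) is a legitimate alternative to the paper's direct estimate $|\partial_\beta^n\alpha_\beta|\le Cn!\lambda^n$. The paper then upgrades the target space by the two-step device ``analytic in a weak space $+$ uniformly bounded in a strong space $\Rightarrow$ analytic in the strong space'' (via coefficient-wise Cauchy estimates), which is exactly the ``fixed Besov space on each compact $K\subset J$'' issue you flag.

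There is, however, a genuine gap in your treatment of the non-accumulation at $\beta=0$. You assert that analyticity of the eigendata $\lambda_{1+\beta},\rho_{1+\beta},m_{1+\beta}$ from Theorem~\ref{dic333}(C), together with working on the spectral complement, yields an analytic extension of $\Sigma$ through $0$. This does not follow. At $\beta=0$ the twisted equation degenerates to the Liv\v{s}ic equation $v=\alpha\circ F-\alpha$, and for generic $v$ (namely whenever $\int v\rho\,dm\neq 0$, or more precisely whenever $\langle v/g^\beta,\rho_{1+\beta}\rangle\neq 0$) the solvability condition fails, so $\alpha_\beta$ itself blows up as $\beta\to 0^+$. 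Since $\phi_\beta$ depends on $\alpha_\beta$ through $R_C(\alpha_\beta)$ and $R_L(\alpha_\beta)$, there is no reason for $\Sigma$ to extend across $0$; normalizing by $\lambda_{1+\beta}$ does not help, because the obstruction is that the source $v/g^\beta$ has a nontrivial component in the leading eigendirection, not that the complementary resolvent is badly behaved.

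The paper's remedy is a step you are missing: one replaces $v$ by a $\beta$-dependent linear combination
\[
\tilde v_\beta \;=\; v\!\int\!\tfrac{w_\beta}{g^\beta}\rho_{1+\beta}\,dm \;-\; w_\beta\!\int\!\tfrac{v}{g^\beta}\rho_{1+\beta}\,dm,
\qquad w_\beta=1-g^\beta=1_I\circ F-g^\beta\cdot 1_I,
\]
which is real-analytic in $\beta$, satisfies $\tilde v_0=0$, and by construction obeys $\langle \tilde v_\beta/g^\beta,\rho_{1+\beta}\rangle=0$ for all $\beta$ near $0$. Hence the (distributional) solution $\hat\alpha_\beta$ for $\tilde v_\beta$ exists and depends analytically on $\beta$ through $0$, and so does $\beta\mapsto\sigma^2(\phi_{\beta,\tilde v_\beta})$. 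Since $w_\beta$ corresponds to the trivial smooth solution $1_I$, one checks via Theorem~\ref{dic11} that for $\beta>0$ near $0$ the vanishing of $\sigma^2(\phi_{\beta,\tilde v_\beta})$ is \emph{equivalent} to the vanishing of $\sigma^2(\phi_{\beta,v})$. Only then does the identity theorem rule out accumulation at $0$. Without this modification-of-source step your argument for the last clause does not go through.
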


\section{Chain and Leibniz rules for fractional derivatives} 
When $g$ is constant, that  is the "linear" case  (for instance $F(x)=2x \ mod 1$), dealing with fractional  derivatives is fairly easy.   The key to deal with  non-linear maps $F$  is to see how fractional derivatives behaves with respect to compositions (Chain  Rule) and products (Leibniz Rule) of functions.   Given $r\in \mathbb{R}$ let $r_+=\max\{0,r\}$ and $r_-=\min\{0,r\}$. Moreover let $r_0=1$ if $r=0$, and $r_0=0$ otherwise. 

\begin{theorem}[Chain Rule I]\label{chain2} Suppose that $\log g 
\in \mathcal{B}^{\gamma}_{\infty,\infty}$, with $\gamma > 0$. Let $s, \operatorname{Re} \beta\in  \mathbb{R}$, with 
$$ s-\operatorname{Re}  \beta+\gamma> 0 \text{ and } s+\gamma > 0,$$   and a small $\epsilon > 0$   there is a bounded linear transformation
$$R_{C}\colon \mathcal{B}^s_{\infty,\infty} \rightarrow \mathcal{B}^{\gamma+(s-\operatorname{Re} \beta)_--\epsilon}_{\infty,\infty} $$ such that the following holds.  If $\psi \in \mathcal{B}^s_{\infty,\infty}$  then
$$D^\beta (\psi\circ F)= (D^\beta \psi)\circ F\cdot g^{\beta} + R_{C}(\psi).$$
\end{theorem}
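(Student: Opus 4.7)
The plan is to test the identity on the unbalanced Haar basis of $\mathcal{B}^s_{\infty,\infty}$ and estimate the Haar coefficients of the error. By linearity, and by continuity of $D^\beta$, composition with $F$, and multiplication by $g^\beta\in\mathcal{B}^\gamma_{\infty,\infty}$ on the relevant Besov spaces, it is enough to verify the identity and the coefficient bound on each basis element. The case $\psi=\mathbf{1}_I$ gives $R_C(\mathbf{1}_I)=0$ immediately.

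For $\psi=\phi_P$ with $P\in\mathcal{P}^k$, let $R_1,R_2\in\mathcal{P}^{k+1}$ be the two preimages of $P$, so that $F\colon R_i\to P$ is a bijection. A direct computation shows
$$(\phi_P\circ F)|_{R_i}=m_i\mathbf{1}_{R_i}+\lambda_i\phi_{R_i},\qquad \lambda_i=\frac{|Q_1^{R_i}||Q_2^{R_i}||P|}{|R_i||Q_1^P||Q_2^P|}=O(1),\qquad m_i=\frac{1}{|R_i|}\Bigl(\frac{|Q_1^{R_i}|}{|Q_1^P|}-\frac{|Q_2^{R_i}|}{|Q_2^P|}\Bigr).$$
The Jacobian identity $\int_{Q_j^{R_i}}g\,dm=|Q_j^P|$ together with the $\gamma$-Hölder continuity of $1/g$ forces $|Q_1^{R_i}|/|Q_1^P|$ and $|Q_2^{R_i}|/|Q_2^P|$ to agree up to $O(|R_i|^\gamma)$; hence $|m_i|\lesssim|R_i|^{\gamma-1}$, a bound that encodes exactly the failure of affinity of $F$ on $R_i$. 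Applying $D^\beta$ term by term and subtracting $(D^\beta\phi_P)\circ F\cdot g^\beta=|P|^{-\beta}g^\beta(\phi_P\circ F)$, one obtains
$$R_C(\phi_P)=\sum_i\lambda_i\bigl(|R_i|^{-\beta}-|P|^{-\beta}g^\beta\bigr)\phi_{R_i}+\sum_i m_i\bigl(D^\beta\mathbf{1}_{R_i}-|P|^{-\beta}g^\beta\mathbf{1}_{R_i}\bigr).$$
In the first sum the scalar factor has $L^\infty(R_i)$-norm $\lesssim|R_i|^{\gamma-\operatorname{Re}\beta}$ by the $\gamma$-Hölder continuity of $g^\beta$; in the second sum the smallness $|m_i|\lesssim|R_i|^{\gamma-1}$ compensates $D^\beta\mathbf{1}_{R_i}$, whose Haar coefficients live on ancestors of $R_i$ and are bounded by $|R_i|$.

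For a general $\psi=c_0\mathbf{1}_I+\sum_P a_P\phi_P$ with $|a_P|\leq\|\psi\|_{\mathcal{B}^s_{\infty,\infty}}|P|^{s+1}$, I would identify the Haar coefficient of $R_C(\psi)$ at each $\phi_S$, $S\in\mathcal{P}^l$. Three classes of $P$ can contribute: (a) the diagonal case $P\in\mathcal{P}^{l-1}$ with $S$ a direct preimage of $P$, which is the principal contribution from the first sum; (b) $P\in\mathcal{P}^k$ with $k<l-1$, contributing through the Haar refinement of the $\gamma$-Hölder function $g^\beta\phi_{R_i}$ at scales below $R_i$; and (c) $P\in\mathcal{P}^k$ with $k\geq l$, contributing through the ancestor expansion in the second sum. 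Combining the bounds of Step~2, the geometric decay $|R|\lesssim\lambda^{-k}$, and the continuity of multiplication by $g^\beta$, each coefficient is expected to satisfy an estimate of the form $\|\psi\|_{\mathcal{B}^s_{\infty,\infty}}|S|^{t+1}$ with $t=\gamma+(s-\operatorname{Re}\beta)_- - \epsilon$. The $(s-\operatorname{Re}\beta)_-$ term reflects the fact that when $s<\operatorname{Re}\beta$ the downstream contributions (b) from many descendants converge only at the degraded rate $|S|^{s-\operatorname{Re}\beta}$; the $-\epsilon$ absorbs a logarithmic loss from the summation over ancestors in (c). The standing hypotheses $s-\operatorname{Re}\beta+\gamma>0$ and $s+\gamma>0$ are precisely what is needed to make both the descendant and ancestor sums converge.

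The hardest step will be the bookkeeping for (c). Although $|m_i|$ is small, $D^\beta\mathbf{1}_{R_i}$ is a genuine distribution whose Haar coefficients spread over all ancestors of $R_i$, and one cannot control $D^\beta\mathbf{1}_{R_i}$ and $|P|^{-\beta}g^\beta\mathbf{1}_{R_i}$ separately. The plan is to pair each ancestor coefficient of $D^\beta\mathbf{1}_{R_i}$ with the matching one of $|P|^{-\beta}g^\beta\mathbf{1}_{R_i}$, whose Haar expansion has both an ancestor part and a descendant part produced by the $\gamma$-Hölder fluctuations of $g^\beta$, so that the two align scale by scale rather than accumulating uncontrolled logarithms. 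This cancellation is the real technical content whose resolution delivers the claimed target space.
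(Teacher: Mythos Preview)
Your decomposition is essentially the paper's: the formula for $\phi_P\circ F$ is Lemma~\ref{koo}, and your two-term expression for $R_C(\phi_P)$ is what the paper packages as Lemma~\ref{q23q23}. The three-case split (a)/(b)/(c) also matches the paper's terms. However, two points of your plan are off. First, no cancellation is needed in (c): the pieces $m_iD^\beta\mathbf{1}_{R_i}$ and $m_i|P|^{-\beta}g^\beta\mathbf{1}_{R_i}$ are each, on their own, in $\mathcal{B}^{s+\gamma-\operatorname{Re}\beta}_{\infty,\infty}$. For a fixed $S=W$ one sums over \emph{descendants} $R_i\subset W$, and the prefactor $|a_P||m_i|\lesssim|R_i|^{s+\gamma}$ makes $\sum_{R_i\subset W}|R_i|^{s+\gamma+1}|W|^{-\operatorname{Re}\beta}\lesssim|W|^{s+\gamma+1-\operatorname{Re}\beta}$ converge precisely because $s+\gamma>0$; the paper does this as terms $I_{s+1,\beta}$ and $II_{s+1,\beta}$, with no pairing. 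Second, the $-\epsilon$ does not come from (c) but from (b): the H\"older tail of $g^\beta$ in the first sum produces contributions at descendants $J\subset R_i$, and for a fixed $S=J$ one sums over \emph{ancestors} $R_i\supset J$ the factor $|R_i|^{s-\operatorname{Re}\beta}$; this is $O(1)$ if $s>\operatorname{Re}\beta$, $O(|\log|J||)$ at the borderline $s=\operatorname{Re}\beta$, and $O(|J|^{s-\operatorname{Re}\beta})$ if $s<\operatorname{Re}\beta$. That logarithm at $s=\operatorname{Re}\beta$ is the only reason for the $\epsilon$, and in the paper it appears in the estimate of $III_{s+1,\beta}$. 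So your ``hardest step'' is in fact the easy one, and the loss you attribute to it actually lives in the step you regard as routine.
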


Define $ \hat{\epsilon}  =\epsilon$ if $s- \operatorname{Re} \beta \geq  0$ and  $ \hat{\epsilon} = 0$ otherwise.

\begin{theorem}[Chain Rule II]\label{chain} Suppose that $\log g 
\in \mathcal{B}^{\gamma}_{\infty,\infty}$, with $\gamma > 0$. Given $s, \operatorname{Re} \beta\in  \mathbb{R}$, with 
$$ s-\operatorname{Re}  \beta+\gamma> 0 \text{ and } s+\gamma > 0,$$ and a small $\epsilon > 0$, define $ \hat{\epsilon}  =\epsilon$ if $s- \operatorname{Re} \beta \geq  0$ and  $ \hat{\epsilon} = 0$ otherwise.   There is a bounded linear transformation
$$R_{C}\colon \mathcal{B}^s_{1,1} \rightarrow \mathcal{B}^{\gamma+(s-\operatorname{Re} \beta)_-- \hat{\epsilon}}_{1,1}$$ such that the following holds.  If $\psi \in \mathcal{B}^s_{1,1}$  then
$$D^\beta (\psi\circ F)= (D^\beta \psi)\circ F\cdot g^{\beta} + R_{C}(\psi).$$
\end{theorem}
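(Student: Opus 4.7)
The plan is to adapt the proof of Chain Rule I (Theorem~\ref{chain2}) from the $\ell^\infty$-based space $\mathcal{B}^s_{\infty,\infty}$ to its $\ell^1$ counterpart $\mathcal{B}^s_{1,1}$. Starting from the unconditional Haar expansion
\[
\psi = d_0\, 1_I + \sum_{k\ge 0}\sum_{P\in \mathcal{P}^k} d_P\,|P|^s \phi_P,
\qquad \sum |d_P| <\infty,
\]
linearity of $D^\beta$ and of composition with $F$ reduces the claimed identity to the atomic case $\psi=\phi_P$: define
\[
R_C(\phi_P) := D^\beta(\phi_P\circ F) - (D^\beta \phi_P)\circ F\cdot g^{\beta},
\]
prove a per-atom bound of the form $|R_C(\phi_P)|_{\mathcal{B}^{\gamma+(s-\operatorname{Re}\beta)_- -\hat\epsilon}_{1,1}} \lesssim |P|^{-s}$, and then set $R_C(\psi):=\sum d_P |P|^s R_C(\phi_P)$; the bounded extension to all of $\mathcal{B}^s_{1,1}$ follows by $\ell^1$ summation of the atomic bound.

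For the atomic estimate the key observation is that $\phi_P\circ F$ is supported on $F^{-1}(P)=R_1^P\cup R_2^P\subset\mathcal{P}^{k+1}$, and on each $R_i^P$ it is a step function at level $k+2$. Decomposing $\phi_P\circ F|_{R_i^P}$ in the local Haar basis $\{\phi_{R_i^P},\,1_{R_i^P}/|R_i^P|\}$, bounded distortion gives leading coefficient $|R_i^P|/|P|$ with a Hölder error of size $|R_i^P|^\gamma$ coming from $\log g\in \mathcal{B}^\gamma_{\infty,\infty}$, and an average part of comparable size. Applying $D^\beta$ turns each $\phi_{R_i^P}$ into $|R_i^P|^{-\beta}\phi_{R_i^P}$, and comparing with $(D^\beta \phi_P)\circ F\cdot g^\beta = |P|^{-\beta}(\phi_P\circ F)\,g^\beta$, in which $g^\beta$ is approximately $(|P|/|R_i^P|)^\beta$ with Hölder error $|R_i^P|^\gamma$, the principal terms cancel and the residual is seen to be an \emph{honest function}, localised in $F^{-1}(P)$, whose Haar coefficients at levels $\ell\ge k+1$ display a gain of order $\gamma$ relative to the scaling forced by $D^\beta$. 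Converting this into a $\mathcal{B}^{\gamma+(s-\operatorname{Re}\beta)_-}_{1,1}$-bound is a short calculation from the definition of the norm.

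The assembly step is then an $\ell^1$ summation over $P$. In the subcritical regime $s-\operatorname{Re}\beta<0$ the atomic bound gives $|R_C(d_P |P|^s\phi_P)|_{\mathcal{B}^{\gamma+(s-\operatorname{Re}\beta)}_{1,1}}\le C|d_P|$, so $\sum|d_P|\le |\psi|_{\mathcal{B}^s_{1,1}}$ closes the estimate with no extra loss, which explains $\hat\epsilon=0$ in that case. In the critical/supercritical regime $s-\operatorname{Re}\beta\ge 0$ there is no margin left in the scaling and the residuals of atoms at different levels $k$ accumulate in a way that only sums logarithmically, which forces the small loss $\hat\epsilon=\epsilon$. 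The main obstacle I expect is precisely this critical regime: one must bound $\sum_{P\in\mathcal{P}^k}$ at each level $k$ uniformly enough that the weight $|Q|^{\gamma-\epsilon}$ attached to cubes $Q\supset F^{-1}(P)$ absorbs the logarithmic accumulation, and one must verify that the resulting double sum indeed defines a bounded operator into the stated target space. A cleaner (but less transparent) alternative is to transpose Chain Rule~I through the duality $\mathcal{B}^s_{1,1}\simeq (\mathcal{B}^{-s}_{\infty,\infty})^*$ and identify the adjoints of composition with $F$ and of multiplication by $g^\beta$ via the transfer operator; this sidesteps some combinatorics at the cost of a separate identification of the transposed $R_C$.
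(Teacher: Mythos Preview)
Your approach is essentially the paper's: atomic Haar decomposition, the explicit two-term formula for $\phi_P\circ F$ on the preimage intervals $P_\star$ (the paper's Lemma~\ref{koo}), comparison with $|P|^{-\beta}(\phi_P\circ F)\,g^\beta$ via the H\"older expansion of $g^\beta 1_{P_\star}$, and then $\ell^1$ summation over atoms.

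One correction worth flagging: the $\hat\epsilon$-loss in the regime $s-\operatorname{Re}\beta\ge 0$ does \emph{not} come from accumulation across atoms at different levels, nor from ancestors $Q\supset F^{-1}(P)$, but from the inner sum over \emph{descendants} $J\subsetneq P_\star$ that appear in the Haar expansion $g^\beta 1_{P_\star}=(|P|/|P_\star|)^\beta 1_{P_\star}+O(|P_\star|^\gamma)1_{P_\star}+\sum_{J\subset P_\star}c_J|J|^{\gamma+1}\phi_J$; after multiplying by $\phi_{P_\star}^{s-\beta}$ and regrouping in the $\mathcal{B}^{\gamma+(s-\beta)_-}_{1,1}$ norm, this descendant sum behaves like $\sum_{J\subset P_\star}(|J|/|P_\star|)^{1-(s-\beta)_-}$, which is borderline divergent exactly when $(s-\beta)_-=0$ and therefore needs the extra $\epsilon$ of room. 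The ancestor sums coming from $D^\beta(1_{P_\star}/|P_\star|)$ are handled by the hypothesis $s+\gamma>0$ with no loss (this is the paper's Proposition~\ref{deriapp}).
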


 Define $ \tilde{\epsilon}  =\epsilon$ if $s \geq  0$ and  $ \tilde{\epsilon} = 0$ otherwise.

\begin{theorem}[Leibniz Rule I]\label{leibnitz} Suppose that  $h \in \mathcal{B}^{\gamma}_{\infty,\infty}$.  Given $ s, \operatorname{Re} \beta\in \mathbb{R}$ and small $\epsilon > 0$,  such that 
$$ s-\operatorname{Re}  \beta+\gamma> 0, \gamma +s > 0.$$
  There    is a  bounded linear transformation
$$R_{L}\colon \mathcal{B}^s_{1,1} \rightarrow \mathcal{B}^{t}_{1,1},$$
where
$$t=\min \{\gamma+ (s - \operatorname{Re} \beta)_- + \hat{\epsilon}, \gamma  + s_-  - \operatorname{Re}\beta  - \tilde{\epsilon} \}$$
 such that the following holds.  If $\psi \in \mathcal{B}^s_{1,1}$  then
$$D^\beta (\psi h)= (D^\beta \psi) \cdot h + R_{L}(\psi).$$
\end{theorem}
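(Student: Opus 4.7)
The plan is to compute the residual $R_L(\psi)=D^\beta(\psi h)-(D^\beta\psi)h$ explicitly in the unbalanced Haar basis. Using Theorem~\ref{fbeta}, write
$$\psi = c_0 1_I + \sum_{k}\sum_{P\in\mathcal{P}^k} c_P \phi_P, \qquad h = h_0 1_I + \sum_{l}\sum_{Q\in\mathcal{P}^l} h_Q \phi_Q,$$
with $c_P=d_P|P|^s$, $\sum_P|d_P|\le|\psi|_{\mathcal{B}^s_{1,1}}$, and $|h_Q|\le|h|_{\mathcal{B}^\gamma_{\infty,\infty}}|Q|^{\gamma+1}$. Since $\phi_P\phi_Q$ vanishes unless $P$ and $Q$ are nested or coincide, the cross-terms of $\psi h$ split into three regimes: (A) $Q\subsetneq P$ (the wavelet from $h$ is strictly finer), (B) $P\subsetneq Q$ (the reverse), and (C) $P=Q$, together with the boundary terms $c_0 h$ and $h_0(\psi-c_0 1_I)$. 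The essential exact cancellation takes place in regime (B): when $Q$ is coarser than $P$, $\phi_Q$ is constant on the support of $\phi_P$, so $\phi_P\phi_Q=\phi_Q|_P\,\phi_P$ remains a level-$k$ Haar wavelet, and $D^\beta$ commutes exactly with multiplication by the scalar $\phi_Q|_P$. Hence regime (B) contributes zero to $R_L$, and the same reasoning disposes of $h_0(\psi-c_0 1_I)$.

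For regime (A), with $l>k$ and $R=\pi_k(Q)$ the level-$k$ ancestor of $Q$, a direct computation yields
$$D^\beta(c_R h_Q\phi_R\phi_Q)-(D^\beta(c_R\phi_R))h_Q\phi_Q = c_R h_Q(\phi_R|_Q)\bigl(|Q|^{-\beta}-|R|^{-\beta}\bigr)\phi_Q.$$
Using $||Q|^{-\beta}-|R|^{-\beta}|\le C_\beta|Q|^{-\operatorname{Re}\beta}$, $|\phi_R|_Q|\lesssim 1/|R|$, and summing in the $\mathcal{B}^t_{1,1}$ norm, the geometric series in $l-k$ converges when $t<\gamma-\operatorname{Re}\beta$ while the outer sum over $R$ converges when $t\le s+\gamma-\operatorname{Re}\beta$; together these give the bound $t\le\gamma+s_--\operatorname{Re}\beta-\tilde\epsilon$, matching the second term in the stated definition of $t$. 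In regime (C) the product $\phi_P^2=1_{Q_1^P}/|Q_1^P|^2+1_{Q_2^P}/|Q_2^P|^2$ is constant on the children of $P$ and decomposes as $a_P 1_P+b_P\phi_P$ with $a_P=1/(|Q_1^P||Q_2^P|)\sim|P|^{-2}$ and $b_P=1/|Q_1^P|-1/|Q_2^P|$. The $b_P\phi_P$ piece cancels with the corresponding part of $(D^\beta\psi^{(k)})h^{(k)}$, leaving the $a_P 1_P$ part as the true residual. Expanding $1_P=(|P|/|I|)1_I+\sum_{R\supsetneq P}\xi_{R,P}\phi_R$ with $|\xi_{R,P}|\sim|P|$, the Haar coefficient of the (C)-residual at $\phi_R$ becomes
$$\sum_{P\subsetneq R,\,P\in\mathcal{P}^k}c_P h_P a_P\xi_{R,P}\bigl(|R|^{-\beta}-|P|^{-\beta}\bigr),$$
and the estimate $|c_P h_P a_P\xi_{R,P}|\lesssim|d_P||h|_{\mathcal{B}^\gamma_{\infty,\infty}}|P|^{s+\gamma}$, combined with the difference bound, yields the regularity $t\le\gamma+(s-\operatorname{Re}\beta)_-+\hat\epsilon$, matching the first term. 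The boundary term $c_0 D^\beta h$ lies in $\mathcal{B}^{\gamma-\operatorname{Re}\beta}_{\infty,\infty}\hookrightarrow\mathcal{B}^{\gamma-\operatorname{Re}\beta-\epsilon}_{1,1}$, consistent with the claimed $t$.

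The main obstacle is regime (C): the same-level product generates a coarse piece $\sum_P c_P h_P a_P 1_P$ that is constant on each cell $P\in\mathcal{P}^k$, and whose fractional derivative must be computed by re-expanding into Haar wavelets at all ancestor levels. Careful bookkeeping, exploiting that $\int_R\phi_P=0$ whenever $R\supsetneq P$ (so only the ancestor chain of each $P$ contributes nontrivially) together with the sharp bound $|\xi_{R,P}|\sim|P|$, reduces the estimate to a telescoping sum with controllable $\mathcal{B}^t_{1,1}$ norm. Linearity of $R_L$ in $\psi$ is immediate from the explicit formulas, and the continuity $R_L\colon\mathcal{B}^s_{1,1}\to\mathcal{B}^t_{1,1}$ follows from the uniform norm bounds derived in each regime.
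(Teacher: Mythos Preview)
Your three-regime decomposition is essentially the paper's approach in disguise: Theorem~\ref{fbeta} applied to $h1_P$ produces precisely the mean $M_P$ (your regime (B) plus the constant of $h$), the same-level piece $c_P\phi_P^{\gamma+1}$ (your regime (C)), and the strictly finer piece $c_{J_+}\phi_{J_+}^{\gamma+1}$ (your regime (A)). Your observation that regime (B) contributes nothing to $R_L$ is exactly the cancellation of the $z_P M_P\phi_P^{s-\beta}$ terms between equations (\ref{eqws}) and (\ref{eqws2}) in the paper's proof.

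There are two genuine issues with the bookkeeping. First, the bound $\bigl||Q|^{-\beta}-|R|^{-\beta}\bigr|\le C_\beta|Q|^{-\operatorname{Re}\beta}$ you invoke in regime (A) is only valid for $\operatorname{Re}\beta\ge 0$; for $\operatorname{Re}\beta<0$ the dominant term is $|R|^{-\operatorname{Re}\beta}$ and your estimate fails as stated. Second, your attribution of the two terms in the minimum to regimes (A) and (C) respectively is not correct: regime (C) actually yields the \emph{better} exponent $\gamma+s-\operatorname{Re}\beta$ (this is the paper's bound for $I_{s,\beta}$ and $III_{s,\beta}$), which is never the bottleneck. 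Both binding constraints come from regime (A), one from the $|Q|^{-\beta}$ piece (the paper's $IV_{s,\beta}$, giving $\gamma+s_--\operatorname{Re}\beta-\tilde\epsilon$) and one from the $|R|^{-\beta}$ piece (the paper's $II_{s,\beta}$, giving $\gamma+(s-\operatorname{Re}\beta)_--\hat\epsilon$). The paper sidesteps the need for any difference estimate by expanding $D^\beta(\psi h)$ and $(D^\beta\psi)h$ separately and bounding each of the four residual terms individually; this is cleaner and covers all signs of $\operatorname{Re}\beta$ uniformly.
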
 

 Define $ \overline{\epsilon}  =\epsilon$ if $s=0$.
 
\begin{theorem}[Leibniz Rule II]\label{leibnitz1} Suppose that $h \in \mathcal{B}^{\gamma}_{\infty,\infty}$. Given $s,\operatorname{Re} \beta\in \mathbb{R}$   and small $\epsilon > 0$  such that 
$$ s-\operatorname{Re}  \beta+\gamma> 0, \gamma +s > 0,$$
there is a  bounded linear transformation
$$R_{L}\colon \mathcal{B}^s_{\infty,\infty} \rightarrow \mathcal{B}^{t}_{\infty,\infty}$$
where
$$t = \min\{ (s - \beta)_- - \epsilon + \gamma,  s_-+ \gamma - \beta -\overline{\epsilon} \}$$
 such that the following holds.  If $\psi \in \mathcal{B}^s_{\infty,\infty}$  then
$$D^\beta (\psi h)= (D^\beta \psi) \cdot h + R_{L}(\psi).$$
\end{theorem}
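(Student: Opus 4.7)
The proof plan follows the structure of Theorem \ref{leibnitz}, replacing the $\ell^1$-summability norm on Haar coefficients by the supremum norm characterizing $\mathcal{B}^s_{\infty,\infty}$. By the linearity of $R_L = D^\beta M_h - M_h D^\beta$ and the unconditional Haar basis structure, it suffices to compute $R_L(\phi_P)$ for each unbalanced Haar wavelet $\phi_P$ separately and then assemble the estimate for $\psi = c_0 1_I + \sum_P a_P \phi_P$ by summing. The hypothesis $\psi \in \mathcal{B}^s_{\infty,\infty}$ gives a uniform bound on the wavelet coefficients $a_P$ via Theorem \ref{fbeta}, and the goal is to derive the corresponding uniform bound on the Haar coefficients of $R_L(\psi)$ at every index $R$.

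For an individual wavelet, using $D^\beta \phi_P = |P|^{-\beta} \phi_P$, the commutator reduces to
$$R_L(\phi_P) = D^\beta(h \phi_P) - |P|^{-\beta} (h \phi_P).$$
Expanding $h \phi_P$ in the Haar basis and applying $D^\beta$ coefficient-wise, the contribution at the index $P$ itself cancels, leaving contributions from Haar indices $Q$ strictly nested with $P$ (either $Q \subsetneq P$ or $Q \supsetneq P$) plus one at $1_I$. The $\gamma$-H\"older regularity of $h$, together with the identity $\int h \phi_R \, dm = m(h, Q_1^R) - m(h, Q_2^R)$ and the bound $|m(h, Q_1^R) - m(h, Q_2^R)| \leq C |h|_{\mathcal{B}^\gamma_{\infty,\infty}} |R|^\gamma$, controls each such coefficient by a product of a power of $|Q|$ and a power of $|P|$ with total H\"older exponent $\gamma$, weighted further by the factor $(|Q|^{-\beta} - |P|^{-\beta})$ from the action of $D^\beta$.

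Assembling the Haar coefficient of $R_L(\psi)$ at a generic index $R$ then amounts to summing over all $P$ nested with $R$, the weights $a_P$ contributing at most $C |P|^{s+1}$ each. This yields two geometric series in the partition scales, one over $P \supseteq R$ and one over $P \subseteq R$, each producing a bound of the form $C|R|^{t+1}$ corresponding to one of the two candidate exponents in the definition of $t$; the minimum gives the asserted regularity. The hypotheses $s - \operatorname{Re}\beta + \gamma > 0$ and $s + \gamma > 0$ are exactly what is needed for both sums to converge. The correction $\overline{\epsilon}$ appears precisely at $s = 0$, where one of the geometric sums becomes marginally divergent and must be summed at the cost of an $\epsilon$-loss of regularity; this is the only place where the $\mathcal{B}^s_{\infty,\infty}$ setup is strictly worse than the $\mathcal{B}^s_{1,1}$ setup of Theorem \ref{leibnitz}, where summability of the coefficients instead provided an $\epsilon$-gain.

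The main obstacle is the careful bookkeeping between the two sums and the boundary exponents where $s$ or $s - \operatorname{Re}\beta$ changes sign, since the dominant contribution switches and the explicit form of the target exponent $t$ shifts accordingly. One must also verify that all manipulations are justified distributionally when $s < 0$, which is guaranteed by the fact that $h$ is a genuine H\"older function (Theorem \ref{fbeta}) so that the product $\psi h$ is well defined by duality against $\mathcal{B}^{-s}_{1,1}$.
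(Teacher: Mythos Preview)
Your proposal is correct and takes essentially the same approach as the paper's proof. Both reduce to the wavelet-by-wavelet commutator computation borrowed from the proof of Theorem~\ref{leibnitz} (the paper literally reuses the decompositions (\ref{eqws}) and (\ref{eqws2}) with $s$ replaced by $s+1$), observe the same cancellation of the leading term $M_P\phi_P^{s-\beta}$, and then estimate the four residual pieces $I_{s+1,\beta}$--$IV_{s+1,\beta}$ by geometric sums over nested grid elements whose convergence is governed precisely by the hypotheses $s-\operatorname{Re}\beta+\gamma>0$ and $s+\gamma>0$; your identification of the borderline case $s=0$ as the source of the $\overline{\epsilon}$-loss matches the paper's treatment of $IV_{s+1,\beta}$.
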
 

\begin{remark} A crucial point on the Chain and Leibniz rules  above is that while
$$D^\beta (\psi\circ F)  \text{ and } (D^\beta \psi)\circ F\cdot g^{\beta} $$
may not be even functions, their difference is indeed a (regular) function nevertheless, and the same holds to   
$$D^\beta (\psi  g^\beta) \text{ and }  (D^\beta \psi)  g^{\beta}.$$
\end{remark}

\section{Solutions with low regularity for H\"older data}

\begin{proof}[Proof of Theorem   \ref{main4}.I] The following argument is quite similar to that in 
Baladi and S. \cite{alt}. Let $0< s< \beta_0\leq Re \ \beta < \gamma$ and $|Im \ \beta|< \delta$.  Let $\alpha_0=0$ and 
$$\alpha_{n+1} = \frac{v-\alpha_n\circ F}{g^{\beta}}.$$
It is easy to see that
$$\sup |\alpha_n|_{L^\infty(m)}\leq  C_1(\beta_0) < \infty$$
and 
$$\lim_n \alpha_n(x)=\alpha(x)$$
for every $x\in I$. 
Choose $\epsilon > 0$ small such that 
$$\lambda= \sup_{x\in I}  \frac{(1+\epsilon)^s}{g^{ \beta_0 -s} (x)}< 1$$
and  $\delta > 0$ small such that
$$  \sup_{0<d(x,y)< \delta} \frac{d(F(x),F(y))}{g(x) d(x,y)}<   1+\epsilon.$$

Let $C_0=0$ and let 
$$C_n=\sup_{ x\neq y } \frac{|\alpha_n(x)-\alpha_n(y)|}{d(x,y)^s}.$$
 It is enough to show  $\sup_n C_n < \infty$.
Considering the cases $d(x,y)< \delta$ and $d(x,y) \geq \delta$ one can show that 
\begin{align*}  & |\alpha_{n+1}(x)-\alpha_{n+1}(y)|  \\
&\leq    \Big| \frac{v(x)}{g^{\beta}(x)}- \frac{v(y)}{g^{\beta}(y)}\Big| + \frac{|\alpha_n(f(x))-\alpha_n(f(y))|}{g^{\operatorname{Re} \beta}(x)} + |\alpha_n(y)|\Big| \frac{1}{g^{\beta}(x)}- \frac{1}{g^{\beta}(y)}   \Big|\\
& \leq ( C_2(\beta_0,\delta) |v|_{\mathcal{B}^s_{\infty,\infty}} + \lambda  C_n) d(x,y)^s,\end{align*} 
so $C_{n+1}\leq  C_2(\beta_0,\delta) |v|_{\mathcal{B}^s_{\infty,\infty}}  + \lambda C_n$. Since $\lambda \in (0,1)$ it follows that $$\sup_n C_n \leq C_3(\beta_0,\delta) |v|_{\mathcal{B}^s_{\infty,\infty}}  < \infty.$$
\end{proof}

\begin{proof}[Proof of Theorem \ref{main4}.II and \ref{main4}.III] Let $\operatorname{Re} \beta\leq s$, $\operatorname{Re} \beta <   \gamma$ and assume $v\in \mathcal{B}^s_{\infty,\infty}$, and $g^{\beta} \in \mathcal{B}^\gamma_{\infty,\infty}$.  There is $C> 0$ such that for every $x,y \in I$, $x\neq y$ there is $N=N(x,y)$ such that 
\begin{equation}\label{ty1}  \frac{1}{C} d(x,y)\leq    \frac{1}{\Pi_{j=0}^{N-1} g\circ F^j(x)}\leq C d(x,y),\end{equation} 
\begin{equation}\label{ty2}   \frac{1}{C} d(x,y)\leq    \frac{1}{\Pi_{j=0}^{N-1} g\circ F^j(y)}\leq C d(x,y).\end{equation} 
Note that 
\begin{align*} \alpha(y) -\alpha(x) &= \sum_{k=1}^{N-1}  \frac{-v\circ F^k(x)}{\Pi_{j=0}^{k-1} g^{\beta}\circ F^j(x)} - \sum_{k=1}^{N-1}  \frac{-v\circ F^k(y)}{\Pi_{j=0}^{k-1} g^{\beta}\circ F^j(y)} \\ 
&+  \sum_{k=N}^{\infty}  \frac{-v\circ F^k(y)}{\Pi_{j=0}^{k-1} g^{\beta}\circ F^j(y)} - \sum_{k=N}^{\infty}  \frac{-v\circ F^k(x)}{\Pi_{j=0}^{k-1} g^{\beta}\circ F^j(x)}.\end{align*} 
We have
$$\Big|\sum_{k=N}^{\infty}  \frac{-v\circ F^k(y)}{\Pi_{j=0}^{k-1} g^{\beta}\circ F^j(y)} - \sum_{k=N}^{\infty}  \frac{-v\circ F^k(x)}{\Pi_{j=0}^{k-1} g^{\beta}\circ F^j(x)}   \Big|\leq   C d(x,y)^{\operatorname{Re} \beta}.  $$
Furthermore  
\begin{align*} |\alpha(y) -\alpha(x)| &\leq  \sum_{k=1}^{N-1}   \frac{|v\circ F^k(y) - v\circ F^k(x)|}{\Pi_{j=0}^{k-1} g^{\operatorname{Re} \beta}\circ F^j(y)} \\
&+ \sum_{k=1}^{N-1} \frac{|v\circ F^k(x)|}{\Pi_{j=0}^{k-1} g^{\beta}\circ F^j(x)}  \Big|  \frac{\Pi_{j=0}^{k-1} g^{\beta}\circ F^j(x)}{\Pi_{j=0}^{k-1} g^{  \beta}\circ F^j(y)} -  1 \Big|\\
&\leq C d(x,y)^\beta \sum_{k=1}^{N-1} \Big(\Pi_{j=0}^{k-1} g^{s-\operatorname{Re} \beta}\circ F^j(y)  \Big) d(x,y)^{s-\operatorname{Re} \beta} + \\
&+ C \sum_{k=1}^{N-1}  \frac{d(F^ky,F^kx)^\gamma}{\Pi_{j=0}^{k-1} g^{\operatorname{Re} \beta}\circ F^j(x)}  \end{align*} 
If $s > \operatorname{Re} \beta$ we have that there is $C$ such that 
$$\sum_{k=1}^{N-1} \Big(\Pi_{j=0}^{k-1} g^{s-\operatorname{Re} \beta}\circ F^j(y)  \Big) d(x,y)^{s-\operatorname{Re} \beta} \leq C,$$
and if $\operatorname{Re} \beta=s$ we have 
$$\sum_{k=1}^{N-1} \Big(\Pi_{j=0}^{k-1} g^{s-\operatorname{Re} \beta}\circ F^j(y)  \Big) d(x,y)^{s-\operatorname{Re} \beta}=N \leq C (|\ln d(x,y)|+1).$$
We also have
\begin{align*} \sum_{k=1}^{N-1}  \frac{d(F^ky,F^kx)^\gamma}{\Pi_{j=0}^{k-1} g^{\operatorname{Re} \beta}\circ F^j(x)}&\leq \sum_{k=1}^{N-1}  \frac{ d(F^ky,F^kx)^{\gamma-\operatorname{Re} \beta}  d(F^ky,F^kx)^{\operatorname{Re} \beta}}{\Pi_{j=0}^{k-1} g^{\operatorname{Re} \beta}\circ F^j(x)} \\
&\leq C \sum_{k=1}^{N-1}  \frac{ d(F^ky,F^kx)^{\gamma-\operatorname{Re} \beta}}{\Pi_{j=0}^{N-1} g^{\operatorname{Re} \beta}\circ F^j(x)} \\
&\leq C d(y,x)^{\operatorname{Re} \beta} \sum_{k=1}^{N-1} d(F^ky,F^kx)^{\gamma-\operatorname{Re} \beta} \leq
C d(y,x)^{\operatorname{Re} \beta}. \end{align*} 
\end{proof}

\section{Birkhoff sum as a  distributional solution  of  the Lvisic equation}

\begin{proof}[Proof of Theorem \ref{erer}] Since $s\in (0,\gamma)$,  the  transfer operator $\Phi_g$ is a bounded operator acting on $\mathcal{B}^{s}_{\infty,\infty}$, it has spectral radius one  and spectral gap, so there is $C$ and $\theta \in (0,1)$ such that 
$$\left|\Phi_g^k u - \int u \ dm\ \rho  \right|_{\mathcal{B}^{s}_{\infty,\infty}} \leq C\theta^k |u|_{\mathcal{B}^{s}_{\infty,\infty}} $$
for every function $u \in \mathcal{B}^{s}_{\infty,\infty}$.  

Consider the closed subspace
$$V_0=\{ v\in \mathcal{B}_{1,1}^{-s}\colon v(\rho)=0\}.$$

We claim  that $V_0\cap \mathcal{T}$ is dense in $S$. Indeed if  $v\in V_0$ there is a sequence $v_n \in \mathcal{T}$ such that $v_n\rightarrow_n v$ on  $\mathcal{B}_{1,1}^{-s}$.  Since $v(\rho)=0$ we have 

$$\lim_n \int v_n \rho \ dm=0,$$
So 

$$w_n=v_n -\int v_n \rho \ dm 1_I\rightarrow_n v$$
on $\mathcal{B}_{1,1}^{-s}$ and
$$\int w_n \rho \ dm =0.$$
Given $w\in V_0\cap \mathcal{T}$ consider  the formal sum
$$ \psi_w = - \sum_{k=0}^\infty w\circ F^k.$$
We claim that $\psi_w$ defines a distribution that belongs to $\mathcal{B}_{1,1}^{-s}$. Indeed due the spectral gap 
\begin{align*} |\langle \psi_w,  \theta \rangle | &= \left| - \sum_{k=0}^\infty \int w\circ F^k  \theta  \ dm  \right|   \leq \sum_{k=0}^\infty \left|\int w    \Phi_g^k \theta  \ dm   \right|  \\
 &\leq \sum_{k=0}^\infty \left|\int w   \left(  \Phi_g^k  \theta  -\int  \theta dm \ \rho  \right)dm   \right| 
\leq C |w|_{\mathcal{B}^{-s}_{1,1}} |\theta|_{\mathcal{B}^{s}_{\infty,\infty}} \end{align*} 
so $\psi_w$ defines a distribution and 
$$ |\psi_w|_{\mathcal{B}^{-s}_{\infty,\infty}}\leq   C |w|_{\mathcal{B}^{-s}_{1,1}}.$$
So $w\mapsto \psi_w$ extends to a  bounded linear transformation  acting on $V_0$.  It is easy to see that $\psi_w$ is the unique solution of the cohomological equation. 
\end{proof} 

\begin{proof}[Proof of Theorem \ref{erer2}]  The proof is quite similar to the proof of Theorem \ref{erer}, but since $\mathcal{B}^{-s}_{\infty,\infty}$ is not separable we need to be little more careful. Due S. \cite{as2} (see also  Nakano and Sakamoto \cite{ns}) we have that $\Phi_g$ has spectral gap acting on  $\mathcal{B}^s_{1,1}$ . Since $(\mathcal{B}^s_{1,1})^\star= \mathcal{B}^{-s}_{\infty,\infty}$ we can define  $\psi_w$ for $w\in V_0\cap \mathcal{T}$ as before and show that $\psi_w\in V_0$, and $w\mapsto \psi_w$ extends to a bounded operator in $V_0$.
\end{proof}

\section{Solutions with low regularity  and Birkhoff sums} 

\begin{proof}[Proof of Theorem \ref{azaz}] We assume $v\in \mathcal{B}^\gamma_{1,1}$ (the other case is similar).  Define
$$\alpha =   \sum_{k=0}^\infty  \frac{v\circ F^k}{\Pi_{j=0}^{k-1} g^{\beta} \circ F^j}.$$

We have $v\in L^1$  so  this series converges in $L^1$ since 

 $$\left| \frac{v\circ F^k}{\Pi_{j=0}^{k-1} g^{\beta} \circ F^j}\right|_{L^1} \leq C \frac{|v|_{L^1}}{ (\min g)^{\beta k}}.$$
 
One can see that  $v=\alpha\circ F - g^\beta \alpha$. In particular $\alpha\in B^{-\delta}_{1,1}$, for every $\delta > 0$.  Choose
$\epsilon, \delta > 0$ small enough  such that 

$$\gamma -\delta -\beta -\epsilon > 0. $$

By Theorems \ref{chain} and  \ref{leibnitz}  we can take the  $\beta$-pseudo derivative of (\ref{ch2}) to obtain
$$D^\beta v = (D^\beta\alpha)\circ Fg^{\beta} +R_C(\alpha) - g^{\beta} (D^\beta \alpha)- R_{L}(\alpha),  $$
so
$$\frac{D^\beta v -R_C(\alpha) +R_{L}(\alpha)  }{g^{\beta}} = (D^\beta\alpha)\circ F- D^\beta\alpha,$$
where 
$$\phi = \frac{D^\beta v -R_C(\alpha) +R_{L}(\alpha) }{g^{\beta}}$$
satisfies
\begin{align*} 
&|\phi|_{\mathcal{B}^{s-\beta}_{1,1}}\leq  |\phi|_{\mathcal{B}^{\gamma- \delta - \beta -\epsilon}_{1,1}}  \\
&\leq C|D^\beta v|_{B^{\gamma-\beta}_{1,1}} 
+ C \left| \frac{-R_C(\alpha) + R_{L}(\alpha)}{g^{\beta}} \right|_{B^{s-\beta}_{1,1}} \\
&\leq C|v|_{B^{\gamma}_{1,1}} 
+ C \left| \frac{-R_C(\alpha) + R_{L}(\alpha)}{g^{\beta}} \right|_{B^{\gamma- \delta - \beta -\epsilon}_{1,1}}  \\
&\leq C|v|_{B^{\gamma}_{1,1}} 
+ C \left| \frac{-R_C(\alpha) + R_{L}(\alpha)}{g^{\beta}} \right|_{B^{\gamma-\delta - \beta -\epsilon}_{1,1}}.
\end{align*} 
In particular by Theorem \ref{erer} we have $D^\beta\alpha \in B^{s-\beta}_{1,1}$ and consequently $\alpha \in B^s_{1,1}.$

\end{proof}

\section{Dichotomy and Liv\v{s}ic Equation: Theorems \ref{dic11} and \ref{dic1}}

\begin{lemma}[Marra and S.\cite{marsma}]\label{ite3}  For $0< s< \gamma$ there is $C>0$ such that for every $n$ and $P\in \mathcal{P}^n$ we have 
$$\left|\Phi_g^k\left(\frac{1_P}{|P|}\right) - \frac{1_{F^kP}}{|F^kP|} \right|_{\scriptscriptstyle  \mathcal{B}^s_{\infty,\infty}}\leq C |F^k(P)|^{\gamma-s}$$
for $k\leq n$.
\end{lemma}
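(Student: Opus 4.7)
The plan is to identify $\Phi_g^k(1_P/|P|)$ explicitly, use bounded distortion to show that the difference $h:=\Phi_g^k(1_P/|P|)-1_{F^kP}/|F^kP|$ is a localized function that is H\"older on $Q:=F^k(P)$, and then read off the $\mathcal{B}^s_{\infty,\infty}$ norm directly from its unbalanced Haar coefficients.

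First, since $P\in\mathcal{P}^n$ and $k\leq n$, the restriction $F^k|_P$ is a bijection onto $Q\in\mathcal{P}^{n-k}$; call its inverse branch $F_P^{-k}$. The transfer operator formula gives
\[
\Phi_g^k\!\left(\frac{1_P}{|P|}\right)(y)=\frac{1_Q(y)}{|P|\,g_k(F_P^{-k}(y))}.
\]
The change of variable $y=F^k(x)$, $dm(y)=g_k(x)\,dm(x)$, yields $\int_Q dm(y)/g_k(F_P^{-k}(y))=|P|$, so $h$ is supported in $Q$ and has zero integral. Uniform distortion for $g_k$ now follows from $\log g\in\mathcal{B}^\gamma_{\infty,\infty}$ and the expanding property: writing $\log g_k=\sum_{j=0}^{k-1}\log g\circ F^j$ and using that $F^j\circ F_P^{-k}$ is a single inverse branch, the expansion yields $d(F^jF_P^{-k}y_1,F^jF_P^{-k}y_2)\leq C\lambda^{-(k-j)}d(y_1,y_2)$, and summation of the resulting geometric series gives $|\log g_k(F_P^{-k}y_1)-\log g_k(F_P^{-k}y_2)|\leq C'd(y_1,y_2)^\gamma$ with constants independent of $n,k,P$. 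Combined with the mean identity $\langle 1/g_k(F_P^{-k}\cdot)\rangle_Q=|P|/|Q|$, this forces
\[
\|h\|_\infty\leq C|Q|^{\gamma-1},\qquad |h(y_1)-h(y_2)|\leq C|Q|^{-1}d(y_1,y_2)^\gamma\text{ for }y_1,y_2\in Q.
\]

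To conclude, I expand $h$ in the Haar basis. Because $h$ is supported on $Q$ with zero mean, $\int h\,\phi_{P'}\,dm=0$ whenever $P'\supsetneq Q$ (the integrand reduces to a multiple of the mean of $h$) or $P'\cap Q=\emptyset$. For $P'\subset Q$, the $L^2$ projection onto $\phi_{P'}$ gives the Haar coefficient
\[
a_{P'}=\frac{|Q_1^{P'}||Q_2^{P'}|}{|P'|}\bigl(\langle h\rangle_{Q_1^{P'}}-\langle h\rangle_{Q_2^{P'}}\bigr),
\]
and the H\"older bound on $h$ implies $|\langle h\rangle_{Q_1^{P'}}-\langle h\rangle_{Q_2^{P'}}|\leq C|Q|^{-1}|P'|^\gamma$, so $|a_{P'}|\leq C|P'|^{1+\gamma}/|Q|$. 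Theorem~\ref{fbeta} writes the local expansion on $Q$ in the form $h=\sum c_{P'}|P'|^s\phi_{P'}$, hence $c_{P'}=a_{P'}/|P'|^s$ satisfies $|c_{P'}|\leq C|P'|^{1+\gamma-s}/|Q|\leq C|Q|^{\gamma-s}$, using $|P'|\leq|Q|$ together with $1+\gamma-s>0$. Taking the supremum over $P'$, and noting that the $c_0$ coefficient vanishes because $\int h\,dm=0$, delivers the claimed bound on $|h|_{\mathcal{B}^s_{\infty,\infty}}$.

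The main obstacle is keeping the distortion constants genuinely uniform in $k\leq n$; this is precisely what the geometric summation argument in the bounded distortion step delivers, and the remainder is bookkeeping with the Haar wavelets.
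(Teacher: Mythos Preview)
The paper does not supply its own proof here; the lemma is quoted from \cite{marsma}. Your strategy—identifying $\Phi_g^k(1_P/|P|)$ as $1_Q/(|P|\,g_k\circ F_P^{-k})$ on $Q=F^kP$, deriving uniform bounded-distortion estimates for $g_k$, and then reading off Haar coefficients—is exactly the natural argument and all of steps 1–3 are correct, including the uniform Hölder estimate $|h(y_1)-h(y_2)|\le C|Q|^{-1}d(y_1,y_2)^\gamma$.

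There is one genuine slip in the last step. You invoke Theorem~\ref{fbeta} to write $h=\sum c_{P'}|P'|^{s}\phi_{P'}$ and then treat $\sup_{P'}|c_{P'}|$ as the $\mathcal{B}^s_{\infty,\infty}$ norm. Theorem~\ref{fbeta} only gives the \emph{forward} implication $|c_{P'}|\le |h|_{\mathcal{B}^s_{\infty,\infty}}$; it does not say this supremum recovers the norm. The norm is defined with the normalization $|P'|^{s+1}$ (so that $\mathcal{B}^s_{\infty,\infty}=C^s$), hence the coefficient that controls the norm is $a_{P'}/|P'|^{s+1}$, not $a_{P'}/|P'|^{s}$. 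With the correct normalization your estimates yield
\[
|h|_{\mathcal{B}^s_{\infty,\infty}}
=\sup_{P'\subset Q}\frac{|a_{P'}|}{|P'|^{s+1}}
\le C\,\sup_{P'\subset Q}\frac{|P'|^{\gamma-s}}{|Q|}
= C\,|Q|^{\gamma-s-1},
\]
which is weaker than the stated $C|Q|^{\gamma-s}$ (and, for generic nonlinear $F$, this weaker bound is sharp: already the top coefficient $c_Q$ has size $\sim|Q|^{\gamma-s-1}$). This is consistent with the two places where the lemma is actually used in the paper (the proof of $V\Rightarrow I$ in Theorem~\ref{dic11} and the proof of Theorem~\ref{CLTweak}), both of which only need the $L^1$/$L^\infty$ consequences of your distortion estimate, namely $\|h\|_\infty\le C|Q|^{\gamma-1}$ and $\|h\|_{L^1}\le C|Q|^{\gamma}$. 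So your argument is sound and delivers what the paper needs; the discrepancy is in the exponent bookkeeping between your final line and the definition of the $\mathcal{B}^s_{\infty,\infty}$ norm.
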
 

\subsection{Proof of Theorem \ref{dic11}}  The implications  $$I\implies II\implies III\implies IV \implies V$$ are obvious. 
 
 \begin{proof}[  $V \implies I$]  Due Lemma \ref{ite3} there is $\Cll{yuyu}$ such that for every $P\in \mathcal{P}\setminus \mathcal{P}^0$ 
\begin{equation} \label{appx} \left|\Phi_g\left(\phi_P\right) -\phi_{F(P)}\right|_{\scriptscriptstyle  \mathcal{B}^{Re \ \beta}_{\infty,\infty}}\leq \Crr{yuyu}  |F(P)|^{\gamma-Re \ \beta}.\end{equation} 
Let $c_\gamma$ as in (\ref{coefa}). Then $\alpha \in \mathcal{B}^\gamma_{\infty,\infty}$ if and only if 

$$\sup_{P\in \mathcal{P}} | c_\gamma(\alpha,P)|< \infty.$$

 By Theorem \ref{main4}.C we have $\alpha\in \mathcal{B}^{Re \ \beta}_{\infty,\infty}$ and  (\ref{appx}) we have 

$$| c_\gamma(\alpha\circ F ,P) - \left(\frac{|F(P)|}{|P|}\right)^\gamma  c_\gamma(\alpha,F(P))|\leq  \Crr{yuyu} |\alpha|_{\scriptscriptstyle  \mathcal{B}^{Re \ \beta}_{\infty,\infty}}.$$
Note also that 
$$| c_\gamma(g^\beta \alpha ,P) - \left( \frac{|F(P)|}{|P|} \right)^{  \beta}   c_\gamma(\alpha,P)|\leq  \Cll{yuyu2} |\alpha|_{L^\infty},$$
so since $v=\alpha\circ F - g^\beta  \alpha$ we have  
$$| c_\gamma(\alpha,F(P))|\leq \left( \frac{|P|}{|F(P)|} \right)^{\gamma- \operatorname{Re}  \beta}  |c_\gamma(\alpha,P)| +  \Cll{other}.$$
Define 
$$\theta = \sup_{P\in \mathcal{P}\setminus \mathcal{P}^0}  \left( \frac{|P|}{|F(P)|} \right)^{\gamma - \operatorname{Re}  \beta} < 1.$$
and

$$\Lambda =   \frac{ \Crr{other}}{1-\theta}.$$ 

Note that  if $L > 0$ we have  $$|c_\gamma(\alpha,F(P))|\geq \Lambda  + L\implies |c_\gamma(\alpha,P)|\geq \Lambda  + \left( \frac{|F(P)|}{|P|} \right)^{\gamma - \operatorname{Re}  \beta} L.$$

We claim if $V.$ holds then   $|c_\gamma(\alpha,Q)|\leq L$ for every $Q$, and in particular $\alpha\in \mathcal{B}^\gamma_{\infty,\infty}$.  Indeed otherwise  there is $Q\in \mathcal{P}^{n_0}$ with $n_0\geq 1$ and $L > 0$ such that  $c_\gamma(\alpha,Q)\geq   \Lambda+L$. For every $n$ choose  $P_n\in \mathcal{P}^n$. Then   there is $Q_{n+n_0}\in \mathcal{P}^{n+n_0}$ such that $Q_{n+n_0}\subset P_n$, $F^n(Q_{n+n_0})=Q$ and

$$|c_\gamma(\alpha,Q_{n+n_0})|\geq  \left( \frac{|Q_{n_0}|}{|Q_{n+n_0}|} \right)^{\gamma - \operatorname{Re}  \beta} L,$$
so

\begin{equation}\label{yyy}  |c_\beta(\alpha,Q_{n+n_0})|\geq  \left( \frac{|Q_{n_0}|}{|Q_{n+n_0}|} \right)^{\gamma - \operatorname{Re}  \beta} |Q_{n+n_0}|^{\gamma- \operatorname{Re}  \beta} L = L|Q_{n_0}|^{\gamma- \operatorname{Re}  \beta}   > 0.\end{equation}  

But $V.$ implies 
$$\liminf_{n} |c_\beta(\alpha,Q_{n+n_0})|=0,$$
that contradicts (\ref{yyy}).   
 \end{proof}
 \begin{proof}[  $I \implies VI$]  If $\alpha\in \mathcal{B}^\gamma_{\infty,\infty}$ then $D^\beta \alpha\in \mathcal{B}^{\gamma- \operatorname{Re}  \beta}_{\infty,\infty}\subset L^1$ is a function and Theorem \ref{azaz} tell us that $\psi = D^\beta \alpha$ satisfies the Liv\v{s}ic cohomological equation.
 \end{proof}

\begin{proof}[  $VI\implies VII$]   By \cite{morris2025} we have 
$$\psi \in \bigcap_{\delta> 0} \mathcal{B}^{\gamma- \operatorname{Re}  \beta -\delta}_{\infty,\infty},$$
By Theorem \ref{azaz}  we have 
$$\psi = -\sum_k \phi\circ F^k = D^\beta \alpha$$
in the sense of distributions, so $\phi =\psi\circ F -\phi$, and  in particular  $\sigma^2(\phi)=0$.
\end{proof} 

\begin{proof}[  $VII\implies II$]  If $\sigma^2(\phi)=0$ then by a  classical result for H\"older functions   there is 
$$\psi \in \bigcap_{\delta> 0} \mathcal{B}^{\gamma- \operatorname{Re}  \beta -\delta}_{\infty,\infty} $$
that satisfies the cohomological equation, so Theorem \ref{azaz} tell us that $\psi = D^\beta \alpha$, and consequently $\alpha \in \mathcal{B}^{\gamma  -\delta}_{\infty,\infty}$ for every $\delta > 0$.

\end{proof}

\subsection{Proof of Theorem \ref{dic1}}   Note that $I\implies II$ is easy . 
\begin{proof}[  $II\implies III$]   Let $c_\gamma(\alpha,P)$ as in (\ref{coefa}). We have $\alpha \in \mathcal{B}^{ \operatorname{Re}  \beta+\epsilon}_{1,1}$ if and only if 
$$\| \alpha\|_{\mathcal{B}^{ \operatorname{Re}  \beta+\epsilon}} =\sum_n \sum_{P\in \mathcal{P}^n}  |c_{ \beta+\delta}(\alpha,P)|< \infty,$$
so
$$\sum_n   |c_{\beta}(\alpha,P^n(x))|\leq \sum_n  |P^n(x)|^\delta |c_{\beta+
\delta}(\alpha,P^n(x))|\leq  \sum_n \lambda_2^n  \| \alpha\|_{\mathcal{B}^{\beta+\epsilon}} < \infty,$$
and $III$ holds. 
\end{proof} 

\begin{proof}[  $IV\implies V$]  This is a simple calculation. \end{proof}

\begin{proof}[  $III\implies V$]  Suppose $v\in \mathcal{B}^\gamma_{1,1}$.  By Theorem \ref{azaz} we have that  
$$\phi\in \bigcup_{\epsilon > 0} \mathcal{B}^{\gamma- \operatorname{Re}  \beta -\epsilon}_{1,1}$$
and 
$$\phi = D^\beta \alpha \circ F - D^\beta \alpha.$$
If $\sigma(\phi)\neq 0$  then by  Theorem \ref{CLTweak} and since $c_0(D^\beta\alpha,P^k(x))=c_\beta(\alpha, P^k(x))$ we conclude that  
$$\lim_{n\rightarrow +\infty} m(x\in I\colon \ \frac{\sum_{k=0}^{n-1}   c_\beta(\alpha, P^k(x))\phi_{P^k(x)}(x) }{\sqrt{n}} \leq z )=  \frac{1}{\sigma(\phi)  \sqrt{2\pi}} \int_{-\infty}^z e^{-\frac{t^2}{2\sigma^2(\phi)}}  \  dt,$$
that would contradicts $III$. So $\sigma(\phi)=0$. 
\end{proof}

\begin{proof}[  $V\implies IV \text{ and } 	V\implies I $]  if $\sigma(\phi)=0$ then by Theorem  \ref{varthm}  there is $$\psi\in \mathcal{B}_{1,1}^{ \gamma- \operatorname{Re} \beta  -\epsilon}\subset L^2$$ satisfying $IV$. By Theorem \ref{azaz} we have $D^{ \operatorname{Re}  \beta}\alpha=\psi$, so  $\alpha \in  \mathcal{B}_{1,1}^{ \gamma-  \epsilon}$, for every $\epsilon > 0$, so $I$ holds.  \end{proof} 

\subsection{Proof of Theorem \ref{dic333}} 
\begin{proof}[Proof of Item A] 
The    essential spectrum radius of $L_ {1+\beta}$ acting on $C^\gamma(I)=\mathcal{B}^\gamma_{\infty,\infty}$ is strictly smaller than $1$. One can prove this using methods as in the proof of an analogous result by  Baladi,  Jiang  and Lanford \cite{bjl}   for $C^{1+\beta}$ markovian expanding maps or  using fractional derivatives and  a method similar to Collet and  Isola \cite{ci}. We skip  the details. 

Consider the bounded operator  $T\colon \mathcal{B}^{-\gamma}_{1,1}\rightarrow \mathcal{B}^{-\gamma}_{1,1}$
given by 
$$T(v)= \frac{v\circ F}{g^\beta}.$$
Since $(\mathcal{B}^{-\gamma}_{1,1})^\star= \mathcal{B}^{\gamma}_{\infty,\infty}$ we have  that $T^\star= L_{1+\beta}$.  So either $1\not\in sp(T)$ or it  belongs to the discrete spectrum of both operators and in any case 
$$R(T-Id)= Ker (L_{1+\beta}-Id)_{\perp} = E_1(L_{1+\beta})_\perp.$$
and of course if $\alpha,\hat{\alpha}$ are two solutions of the cohomological equation then $\alpha-\hat{\alpha} \in E_1(T)$ and $1$ is an eigenbvalue of $T$ if and only if $1$ is an eigenvalue of $L_{1+\beta}$. 
\end{proof}

\begin{proof}[Proof of item B.] 
We can decompose $$L_ {1+\beta} = K + R$$
where $K$ and $R$ are bounded operators in $\mathcal{B}^\gamma_{\infty,\infty}$  such that $KR=RK=0$, and there is $\theta < 1$ such that 

$$K(\mathcal{B}^\gamma_{\infty,\infty})=W_\beta; $$
$$sp(K)= sp(L_ {1+\beta})\cap B(0,1)^c;$$
$$sp(R)= sp(L_ {1+\beta})\cap B(0,\theta). $$

For $v\in \mathcal{B}^{-\gamma}_{1,1}$ define   the distribution  $\alpha \in \mathcal{B}^{-\gamma}_{1,1}$  as 

$$\langle \alpha, \psi\rangle  =- \sum_{n=1}^\infty \langle \frac{v \circ F^{n-1}}{ g_n^\beta} , \psi \rangle.$$
Then $\alpha $ is well-defined since for $\psi \in \mathcal{B}^{\gamma}_{\infty,\infty}$ 
\begin{align*} \sum_{n=1}^\infty \left|  \langle \frac{v \circ F^{n-1}}{ g_n^\beta} , \psi \rangle \right|  &= \sum_{n=1}^\infty  \left|  \langle \frac{v}{ g^\beta} , L_{1+\beta}^{n-1}  \left( \psi \right)  \rangle  \right| \\
&= \sum_{n=1}^\infty  \left|  \langle \frac{v}{ g^\beta} , R^{n-1}  \left( \psi \right)  \rangle    \right|  \leq \sum_{n=1}^\infty  C\theta^n < \infty.\end{align*} 
and $v=\alpha \circ F - g^{\beta} \alpha$ in the sense of distributions since 
\begin{align*}
&\left\langle  \alpha  \circ F - g^\beta \alpha , \psi \right\rangle\\
&=
- \sum_{n=1}^{\infty} 
 \langle \frac{v  \circ F^{n}}{ g_n^\beta \circ F  } ,\psi\rangle 
  + \sum_{n=1}^{\infty} \langle  \frac{v  \circ F^{n-1}}{ g_{n-1}^\beta \circ F  } ,\psi\rangle  \\
&= \left\langle v , \psi \right\rangle. \end{align*} 
 \end{proof}

 \begin{proof}[Proof of item C] We have that $\beta\mapsto L_{1+\beta}$ is a complex-analytic family of quasi-compact bounded operators acting on $C^\gamma$. for $\beta=1$ we have that the peripheral spectrum is $\{1\}$  is a simple eigenvalue. This implies that    $\beta\sim 0$ there is a unique $\lambda_{1+\beta}\sim 1$ such that it is a simple eigenvalue and it is   the only element of the peripheral spectrum of $L_{1+\beta}$. Moreover by classical thermodynamical formalism 
 
 $$\frac{d}{d\beta} \ln |\lambda_{1+\beta}| \left.\right|_{\beta=0} = \int \ln g \ \rho \ dm > 0.$$
 
 It follows that $\lambda_{1+\beta}\neq 1$ for $\beta\sim 0$, $\beta \neq 0$ and the remaining statements  follows.

 \end{proof}

\section{Central Limit  Theorem  for distributional solutions: Proof of Theorem \ref{CLTweak}}

\change{"for observables " removed from section's title}

\begin{theorem}[Central Limit Theorem for Besov functions]  \label{varthm}   Suppose that either  $\psi \in \mathcal{B}^t_{1,1}$, $1/2\leq t < \gamma$, or   $\psi \in \mathcal{B}^t_{\infty,\infty}$, and in both cases 
$$\int \psi  \ \rho \ dm=0.$$ We have that  $\sigma^2=\sigma^2(\psi)$, as defined in (\ref{variance}), is well-defined. Then $\sigma=0$ if and only if $\psi = u\circ F - u$, where $u\in \mathcal{B}^t_{1,1}$. Moreover if $\sigma^2(\psi) > 0$ then 
$$\lim_{n\rightarrow +\infty} m(x\in I\colon \ \frac{\sum_{k=0}^{n-1} \psi\circ F^k(x) }{\sqrt{n}} \leq z )=  \frac{1}{\sigma \sqrt{2\pi}} \int_{-\infty}^z e^{-v^2/2\sigma^2}  \  dv.$$
\end{theorem}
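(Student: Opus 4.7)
The plan is to run the Nagaev--Guivarc'h perturbed transfer operator method, using the spectral gap of $T=\Phi_g$ on the appropriate Besov space as the crucial analytic input. First I would check that $\psi\in L^2(\rho\,m)$ so that $\sigma^2(\psi)$ makes sense. For $\psi\in \mathcal{B}^t_{\infty,\infty}$ with $t>0$ this is immediate since such $\psi$ is $t$-H\"older and thus bounded. For $\psi\in \mathcal{B}^t_{1,1}$ with $t\ge 1/2$ I would use the unbalanced Haar expansion together with the bound $\|\phi_P\|_{L^2}^2\le C|P|^{-1}$ to estimate
\[
\|\psi\|_{L^2}\le |c_0|+C\sum_k\sum_{P\in\mathcal{P}^k}|d_P||P|^{t-1/2}\le C|\psi|_{\mathcal{B}^t_{1,1}},
\]
which is the place where the threshold $t\ge 1/2$ enters.

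Next, using the spectral gap of $T=\Phi_g$ on $\mathcal{B}^t_{\infty,\infty}$ (and, via the duality argument in Theorems \ref{erer}--\ref{erer2}, on $\mathcal{B}^t_{1,1}$), I would solve $\psi=u-Tu$ with $u=\sum_{k\ge 0}T^k\psi$ convergent in the same space, using $\int\psi\,\rho\,dm=0$. The standard expansion
\[
\int \Big(\tfrac{S_n}{\sqrt n}\Big)^{\!2}\rho\,dm=\int\psi^2\rho\,dm+\tfrac{2}{n}\sum_{k=1}^{n-1}(n-k)\int\psi\,T^k\psi\,\rho\,dm
\]
converges, by the spectral gap, to $\sigma^2(\psi)=\int\psi^2\rho\,dm+2\sum_{k\ge1}\int\psi\,T^k\psi\,\rho\,dm$, so $\sigma^2$ is well defined. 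The coboundary equivalence is then standard: if $\psi=U\circ F-U$ with $U\in L^2$, then $S_n=U\circ F^n-U$ has bounded $L^2$ norm and $\sigma^2=0$; conversely, if $\sigma^2=0$ one shows (via the martingale-type identity that follows from the Nagaev expansion below, or directly from $u-Tu=\psi$ and $\int(Tu)\cdot w\,\rho\,dm=\int u\cdot(w\circ F)\,\rho\,dm$) that $U=-u\circ F+\text{const}$ belongs to $\mathcal{B}^t_{1,1}$ and satisfies $\psi=U\circ F-U$.

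For the CLT itself I would consider the analytic family of perturbed transfer operators
\[
T_s\phi=T(e^{is\psi}\phi),\qquad s\in\mathbb{C},\ |s|\ \text{small},
\]
acting on the same Besov space. Since $T$ has a simple leading eigenvalue $1$ separated from the rest of the spectrum, standard analytic perturbation theory yields a simple eigenvalue $\lambda(s)$ of $T_s$ depending holomorphically on $s$ with $\lambda(0)=1$. A direct computation using the invariance of $\rho\,m$ gives $\lambda'(0)=i\int\psi\,\rho\,dm=0$ and $\lambda''(0)=-\sigma^2(\psi)$. Writing $T_s^n=\lambda(s)^nP_s+Q_s^n$ and evaluating the characteristic function of $S_n/\sqrt n$ under $\rho\,m$,
\[
\int e^{is S_n/\sqrt n}\rho\,dm=\langle T_{s/\sqrt n}^n 1,\rho\rangle=\lambda(s/\sqrt n)^n+O(\theta^n)\ \longrightarrow\ e^{-s^2\sigma^2/2},
\]
one gets convergence of the characteristic functions; L\'evy's continuity theorem, together with the lower bound $\rho\ge 1/C$ in \eqref{lower} (so that convergence in distribution under $\rho\,m$ is equivalent to that under $m$), concludes the proof.

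The main obstacle is ensuring that $s\mapsto T_s$ is a bounded analytic family on the chosen Besov space, in particular that multiplication by $e^{is\psi}$ preserves the space continuously in $s$ with a bound uniform on a neighborhood of $0$. In the H\"older regime $\mathcal{B}^t_{\infty,\infty}$ this is routine since the space is an algebra. In the $\mathcal{B}^t_{1,1}$ regime one does not have an algebra structure, so I would either work with the Taylor expansion $e^{is\psi}=\sum (is)^k\psi^k/k!$ and control the iterates $\psi^k$ through the embedding $\mathcal{B}^t_{1,1}\hookrightarrow L^\infty$ (valid for $t>1/2$ by the Haar estimates above, possibly after a small loss) and an auxiliary space on which $T$ still has spectral gap, or reduce to the H\"older case via an approximation argument combined with Theorem \ref{azaz}. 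This last verification is the place where the hypothesis $t\ge 1/2$ is essential.
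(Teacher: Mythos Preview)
Your approach via Nagaev--Guivarc'h is fine for the H\"older case $\mathcal{B}^t_{\infty,\infty}$, but in the $\mathcal{B}^t_{1,1}$ case there is a genuine gap. The perturbed operator $T_s\phi=T(e^{is\psi}\phi)$ must be a bounded operator on a space where $T$ has spectral gap, and must depend analytically on $s$ in a complex neighbourhood of $0$. But $\mathcal{B}^t_{1,1}$ is not an algebra, and more seriously, for $t<1$ its elements need not be bounded: at a point $x$ one has $|\psi(x)|\le |d_0|+C\sum_k |d_{P_k(x)}||P_k(x)|^{t-1}$, and since $|P_k(x)|^{t-1}\to\infty$ there is no reason for this to be finite. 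Hence for complex $s$ the multiplier $e^{is\psi}$ is not even a bounded function, and the analytic family $s\mapsto T_s$ cannot be set up. Your claimed embedding $\mathcal{B}^t_{1,1}\hookrightarrow L^\infty$ for $t>1/2$ is false (on a one-dimensional space the classical threshold is $t>1$), so the first suggested workaround does not apply, and the approximation argument is not spelled out.

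The paper avoids this entirely by using Gordin's martingale approximation instead of perturbed operators. One works with the \emph{normalised} transfer operator $T\phi=\rho^{-1}\Phi_g(\rho\phi)$, which is a left inverse of the Koopman operator and still has spectral gap on $\mathcal{B}^t_{1,1}$ (since $\rho$ is a $\gamma$-H\"older multiplier). Setting $w=\sum_{k\ge1}T^k\psi\in\mathcal{B}^t_{1,1}$ one gets $h:=\psi-(w\circ F-w)\in\operatorname{Ker}T$, and $\operatorname{Ker}T$ is exactly the space of reverse martingale differences for the filtration $(F^{-k}\mathcal{B})$, so $\sigma^2(\psi)=\sigma^2(h)=\|h\|_{L^2(\rho m)}^2$ and Gordin's CLT applies directly. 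This argument uses only the spectral gap and the embedding $\mathcal{B}^t_{1,1}\hookrightarrow L^2$ (where $t\ge 1/2$ enters, as you correctly identified), and never needs multiplication by $e^{is\psi}$ or any algebra structure. You already have the key ingredient $w=\sum T^k\psi$ in your sketch; the fix is to use it for a martingale decomposition rather than for Nagaev--Guivarc'h.
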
 
\begin{proof}The case $\psi \in \mathcal{B}^t_{\infty,\infty}$ is a classical one, since functions in  $\mathcal{B}^t_{\infty,\infty}$ are H\"older continuous. Let's consider the other case. The assumption  $1/2\leq t$ implies that $\mathcal{B}^t_{1,1}$ is continously embedded in $L^2(m)=L^2(\rho m)$.  By the exponential decay of correlations proved in Arbieto and S. \cite{as} (See also Nakano and Sakamoto \cite{ns}) it follows that $\sigma$ is well-defined. Consider the transfer operator
$$\Phi_g(\psi)(x)=\sum_{Fy=x} g(y)\psi(y).$$
By Arbieto and S. \cite{as2} we have that  $\Phi_g\colon \mathcal{B}^t_{1,1}\mapsto \mathcal{B}^t_{1,1}$  has spectral gap, and, since $\gamma$-H\"older functions (and in particular $\rho$) are bounded point-wise multipliers acting on $\mathcal{B}^t_{1,1}$ it follows that the 
  normalized transform operator 
  $$T\psi =  \frac{1}{\rho} \Phi (\rho \psi)$$
  also has spectral gap on $\mathcal{B}^t_{1,1}$.  Note that $T$ is a left inverse of the Koopman operator, that is, $T(\psi\circ F)=\psi$.  Define $v\in \mathcal{B}^t_{1,1}$ as
  $$w = \sum_{k=1}^\infty T^k\psi. $$
  It is easy to see that 
  $$h=\psi - (w\circ F - w) \in  Ker \ T,$$
  which implies  $\sigma(\psi)=\sigma(h)=|h|_{L^2(\rho m)}$ and
  $$\int h u\circ F \rho \ dm =0$$ 
  for every $u\in L^2(\rho \ dm)$.   Note that  $\sigma=0$ if and only if $\psi=w\circ F - w$. If $\sigma> 0$ we can apply Gordin \cite{gordin} and obtain the Central Limit Theorem. 
\end{proof}

\begin{proof}[Proof of Theorem \ref{CLTweak} ]   Define
\begin{align*} \theta_n^1(x)&=\psi\left(\frac{1_{P_n(x)}}{|P_n(x)|} \right)= \sum_{k=0 }^\infty   \int \phi\circ F^k  \frac{1_{P_n(x)}}{|P_n(x)|} \ dm,\\
\theta_n^2(x)&= \sum_{k<n }   \int \phi\circ F^k  \frac{1_{P_n(x)}}{|P_n(x)|} \ dm,\\
\theta_n^3(x)&= \sum_{k<n }  \int \phi \frac{1_{F^kP_n(x)}}{|F^kP_n(x)|} \ dm, \\
\theta_n^4(x)&= \sum_{k<n }  \int \phi \circ F^k(x).
\end{align*}
Since the average of $\phi$ with respect to the invariant measure $\rho m$ is zero,   by the  exponential decay of correlation in  $\mathcal{B}^s_{\infty,\infty}$ and Lemma \ref{ite3} 
\begin{align*} |\theta^1_n(x)- \theta^2_ n(x)|&=\left| \sum_{k\geq n} \int \phi\circ F^k  \frac{1_P}{|P|} \ dm \right|\\
&\leq  \sum_{k\geq n}  \left| \int \phi \  \Phi_g^k\left(\frac{1_P}{|P|} \right)\ dm   \right| \leq  \Cll{nova},\end{align*} 
Moreover for $x\in P$ 
\begin{align*} |\theta^2_n(x)-\theta^3_n(x)|&=\left| \sum_{k<n } \int \phi\circ F^k  \frac{1_P}{|P|} \ dm - \sum_{k<n }  \int \phi \frac{1_{F^kP}}{|F^kP|} \ dm  \right|\\
 &\leq \left| \sum_{k<n } \int \phi \cdot \left( \Phi_g^k  \left(\frac{1_P}{|P|}\right)  - \frac{1_{F^kP}}{|F^kP|}\right) \ dm  \right| \leq    \Cll{nova2}.\end{align*} 
 Finally if $\phi\in \mathcal{B}^s_{1,1}$ we have 
 
\begin{align*}
|\theta_n^4-\theta_n^3|_{L^1(m)}=
&\int\left| \sum_{k< n}  \left(    \phi\circ F^k(x)- \int  \phi\frac{1_{F^kP_n(x)}}{|F^kP_n(x)|} \ dm \right)  \right| \ dm(x) \\
\leq &\sum_{k< n} \int   \left|     \phi\circ F^k(x)  - \int  \phi \frac{1_{F^kP_n(x)}}{|F^kP_n(x)|} \ dm  \right| \ dm(x)\\
\leq &\sum_{k< n} \sum_{P\in\mathcal{P}^n} \int_P   \left|      \phi\circ F^k(x)  - \frac{1}{|F^kP|} \int_{F^kP}  \phi  \ dm  \right| \ dm(x)\\
\leq &\sum_{k< n} \sum_{P\in\mathcal{P}^n} \int   \left|      \phi - \frac{1}{|F^kP|} \int_{F^kP}  \phi  \ dm  \right| \Phi_g^k (1_P) \ dm\\
\leq &C \sum_{k< n} \sum_{P\in\mathcal{P}^n} \frac{|P|}{|F^kP|}   \int_{F^kP}   \left|      \phi - \frac{1}{|F^kP|} \int_{F^kP}  \phi  \ dm  \right|\ dm\\
\leq & C \sum_{k< n} \sum_{Q\in\mathcal{P}^{n-k}}   \int_{Q}   \left|      \phi - \frac{1}{|Q|} \int_{Q}  \phi  \ dm  \right|\ dm\\
\leq & C \sum_{k< n} \sum_{Q\in\mathcal{P}^{n-k}}  |Q|^{-s} \int_{Q}   \left|      \phi - \frac{1}{|Q|} \int_{Q}  \phi  \ dm  \right|\ dm\leq C |\phi|_{\scriptscriptstyle \mathcal{B}^s_{1,1}}
\end{align*}

If $\phi\in \mathcal{B}^s_{\infty,\infty}$ we  get the sharper estimate 

\begin{align*}
|\theta_n^4(x)-\theta_n^3(x)| =
&\left| \sum_{k< n}  \left(    \phi\circ F^k(x)- \int  \phi\frac{1_{F^kP_n(x)}}{|F^kP_n(x)|} \ dm \right)  \right|   \\
&\leq  C \sum_{k< n}  |F^kP_n(x)|^s \leq \Cll{n3}.
\end{align*}

Putting together those estimates, Theorem  \ref{varthm} and 
Slutsky's theorem we conclude the proof of the fist equation in the Theorem. But note that 
$$
 \psi\!\left( \frac{1_{P_n(x)}}{|P_n(x)|} \right) =  \sum_{k=0}^{n-1} c_0(\psi,P^k(x))\,\phi_{P^k(x)}(x).
$$


\end{proof}

\section{Real-analyticity of the variance and Dichotomies} 

\begin{proposition}\label{ana}  Let $t\in (a,b)\mapsto \psi_t$ be a real-analytic family of real-valued functions on either  $\mathcal{B}^s_{1,1}$, $1/2\leq s < \gamma$,   or   $\mathcal{B}^s_{\infty,\infty}$, with $s> 0$.  Then
$$t\mapsto \sigma^2(\psi_t)$$
is real-analytic. 
\end{proposition}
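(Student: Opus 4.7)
The approach I would take is to express $\sigma^2(\psi_t)$ through the Green--Kubo series and exploit the spectral gap of the normalized transfer operator to obtain a uniformly convergent sum of real-analytic functions.

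The first step is a normalization. Since $\psi \mapsto \int \psi \rho \, dm$ is a bounded linear functional on both $\mathcal{B}^s_{1,1}$ (which embeds in $L^1$) and $\mathcal{B}^s_{\infty,\infty}$, the centered family
$$
\tilde\psi_t := \psi_t - \int \psi_t \, \rho \, dm
$$
is again real-analytic with values in the same Besov space. Because $\sigma^2(\psi_t) = \sigma^2(\tilde\psi_t)$, I may assume throughout that $\int \psi_t \, \rho\, dm = 0$.

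Next, following Theorem \ref{varthm}, I would work with the normalized transfer operator $T\phi := \rho^{-1}\Phi_g(\rho \phi)$. By the cited results of Arbieto--S., $T$ has spectral gap on the ambient Besov space; restricting to the closed zero-mean subspace $V_0$ yields constants $C>0$ and $\theta \in (0,1)$ with $\|T^k|_{V_0}\| \leq C\theta^k$. In the Hölder case ($\mathcal{B}^s_{\infty,\infty}$) the ambient space embeds continuously into $L^2(\rho m)$ trivially, while in the case $\mathcal{B}^s_{1,1}$ with $s \geq 1/2$ this $L^2$ embedding follows from the Haar coefficient characterization. Applying the Green--Kubo identity already established en route to Theorem \ref{varthm} (invariance of $\rho m$, duality $\int (\phi\circ F) \chi \rho\, dm = \int \phi (T\chi) \rho\, dm$, and summation of the cross terms),
$$
\sigma^2(\psi_t) = \int \psi_t^2 \, \rho\, dm + 2 \sum_{k=1}^\infty \int \psi_t \, (T^k \psi_t) \, \rho\, dm.
$$

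Every summand is the image of the analytic map $t \mapsto \psi_t \otimes \psi_t$ through a fixed bounded bilinear form on the relevant Besov space, and is therefore real-analytic in $t$. The spectral bound gives
$$
\left| \int \psi_t \, (T^k \psi_t) \, \rho \, dm \right| \leq C\theta^k \, \|\psi_t\|_{L^2(\rho m)}^2,
$$
so on each compact subinterval $[a',b'] \subset (a,b)$ the series converges uniformly in $t$. A uniform limit of real-analytic functions is real-analytic (Weierstrass), hence $t \mapsto \sigma^2(\psi_t)$ is real-analytic on $(a,b)$.

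The main point to verify is the spectral gap of $T$ on the centered subspace together with the $L^2$ embedding of $\mathcal{B}^s_{1,1}$ for $s \geq 1/2$, both of which are available from the cited machinery. Everything else is a soft argument about analytic families in Banach spaces and uniform convergence of analytic functions.
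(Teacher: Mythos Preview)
Your Green--Kubo approach is sound in outline and close in spirit to the paper's argument, but the final step contains a genuine gap: the assertion that ``a uniform limit of real-analytic functions is real-analytic (Weierstrass)'' is false for functions of a \emph{real} variable. By Stone--Weierstrass, every continuous function on a compact interval is a uniform limit of polynomials; uniform convergence on real intervals preserves nothing stronger than continuity. Weierstrass' theorem applies to \emph{holomorphic} functions on open subsets of $\mathbb{C}$. To repair this you must first extend the family $t\mapsto\psi_t$ to a holomorphic map on a complex neighbourhood $U$ of $(a,b)$ (which is the definition of real-analyticity for Banach-valued maps), observe that each summand $z\mapsto\int\psi_z(T^k\psi_z)\rho\,dm$ is then holomorphic on $U$ as the composition of $z\mapsto\psi_z$ with a bounded complex-bilinear form, and check that your exponential bound persists on compact subsets of $U$ (it does, since $\|\psi_z\|$ is locally bounded there). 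Only then does Weierstrass give holomorphy of the sum, hence real-analyticity on $(a,b)$. A minor related point: your estimate should read $C\theta^k\|\psi_t\|_{L^2}\|\psi_t\|_{\mathcal{B}}$ rather than $C\theta^k\|\psi_t\|_{L^2}^2$, since the decay comes from the spectral gap in the Besov norm, not in $L^2$.

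For comparison, the paper avoids this issue entirely by expressing $\sigma^2(\psi_t)$ as a \emph{finite} composition of analytic maps rather than an infinite series. In the $\mathcal{B}^s_{\infty,\infty}$ case it simply invokes Ruelle's analyticity of pressure. In the $\mathcal{B}^s_{1,1}$ case it uses the Gordin decomposition from Theorem~\ref{varthm}: the spectral gap makes $w_t=\sum_{k\ge1}T^k\psi_t$ converge in norm, so $t\mapsto w_t$ is the image of the analytic map $t\mapsto\psi_t$ under the bounded operator $(I-T)^{-1}T$; then $h_t=\psi_t-(w_t\circ F-w_t)$ is analytic, and $\sigma^2(\psi_t)=\int h_t^2\,\rho\,dm$ is the composition with a bounded bilinear form. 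This packages the same spectral-gap input more cleanly and sidesteps the series-convergence subtlety altogether.
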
  
\begin{proof} For the case $\psi_t \in B^s_{\infty,\infty}$ this follows from the fact that the topological pressure is real analytic  in the space of H\"older functions (Ruelle \cite{ruelle2nd}) and the variance can be expressed in terms of the second order derivatives of these function. So we will prove the result only for $\psi_t \in \mathcal{B}^s_{1,1}$,with  $1/2\leq s < \gamma$. Using the notation of the proof of Theorem \ref{varthm}., define 
 $$w_t = \sum_{k=1}^\infty T^k\psi_t. $$
We have  that 
  $$h_t=\psi_t- (w_t\circ F - w_t) $$
  is a real analytic family in $Ker \ T$. In particular
  $$t\mapsto \sigma^2(\psi_t)=\sigma^2(h_t)= |h_t|_{L^2(m)}.$$
is real analytic and we are done. 
\end{proof} 

\begin{proof}[Proof of Theorem \ref{dic2}]  Suppose  that $t\mapsto v_t\in \mathcal{B}^{\gamma}_{\infty,\infty}$ is real-analytic.  Note that 
$$t\mapsto D^\beta v_t \in \mathcal{B}^{\gamma- \beta}_{\infty,\infty}$$
is real-analytic. By Theorem \ref{main4}.C we have that $t\mapsto \alpha_t\in \mathcal{B}^\beta_{\infty,\infty}$  is real-analytic, where $\alpha_t$ is the unique bounded solution of  
$$v_t = \alpha_t\circ F - g^\beta \alpha_t.$$
By Theorems \ref{chain2} and \ref{leibnitz} the have that for small $\epsilon > 0$ the map
$$t\mapsto \psi_t:= \frac{D^\beta v_t -R_C(\alpha_t) +R_{L}(\alpha_t)  }{g^{\beta}} \in\mathcal{B}^{\gamma- \beta -\epsilon}_{\infty,\infty} $$
is also  real-analytic, so by Proposition \ref{ana}  we have that   $t\mapsto \sigma^2(\psi_t)$ is real-analytic and the result follows due Theorem \ref{dic1}.

The proof in the case of real-analytic family $t\mapsto v_t\in \mathcal{B}^{\gamma}_{1,1}$, with $\gamma-\beta > 1/2$ is similar, using Theorem \ref{chain}  and Theorem \ref{leibnitz1} instead. 
\end{proof}

\begin{proof}[Proof of Theorem \ref{dic4}]  We will prove $A$. Since  the inclusion $i\colon \mathcal{B}\rightarrow \mathcal{B}^{\gamma}_{\infty,\infty} $ is continuous  we have that 

 $$v\in \mathcal{B}  \mapsto \psi_v := \frac{D^\beta v -R_C(\alpha_v) +R_{L}(\alpha_v)  }{g^{\beta}} \in\mathcal{B}^{\gamma- \beta -\epsilon}_{\infty,\infty} $$
 
 is continuous, so 
 
 $$v\mapsto \sigma^2(\psi_v)$$
 is continuous.  It follows that for every $\epsilon >0$ the set $\mathcal{F}_\epsilon$ of functions $$\gamma \in C^k([0,1]^d,\mathcal{B})$$
 such that 
 $$m_d(\{t\in [0,1]^d\colon  \sigma^2(\psi_{\gamma(t)}) < \epsilon\}).$$
 is open in the $C^k$ topology. 
  So by the Baire Theorem it is enough to prove that $\mathcal{F}_\epsilon$ is dense. 
  
  Let  $\gamma \in C^k([0,1]^d,\mathcal{B} )$ be an arbitrary family. 
 Indeed let $w\in \mathcal{B}$ be such that $\sigma^2(\psi_{w})> 0$.  Extend the family $\gamma$ to a family $$\hat{\gamma} \in C^k([0,1]^{d+1},\mathcal{B} )$$ defined as 
 $$\hat{\gamma}(t,u)= \gamma(t)+ u w$$
 for $$(t,u)\in [0,1]^d\times [0,1].$$
 Note that  for fixed $t$ we have that $u\mapsto  \hat{\gamma}(t,u)$ is real-analytic. If $\sigma^2(\psi_{\gamma(t)})~=~0$ then
 $$\sigma^2(\psi_{\hat{\gamma}(t,u)})=u^2 \sigma^2(\psi_w)> 0$$
 for $u > 0$. If $\sigma^2(\psi_{\gamma(t)})> 0$ then  $\sigma^2(\psi_{\hat{\gamma}(t,u)})> 0$ for small $u$. In both cases by  Theorem \ref{dic2} we conclude the for every fixed $t\in [0,1]^d$ we have that 
 $$m_1(\{u\in [0,1]\colon \sigma^2(\psi_{\hat{\gamma}(t,u)})=0\})=0,$$
 so by Fubini Theorem for  almost every $u\in [0,1]$  we have 
  $$m_d(\{t\in [0,1]^d\colon \sigma^2(\psi_{\hat{\gamma}(t,u)})=0\})=0.$$
  In particular there is a sequence $u_n \rightarrow 0$ such that 
  
    $$m_d(\{t\in [0,1]^d\colon \sigma^2(\psi_{ \gamma(t)+u_n w})=0\})=0,$$
    
    so the family $t\mapsto  \gamma(t)+u_n w$ belongs to $\mathcal{F}_\epsilon$ for every $n$, and we conclude that 
    $\mathcal{F}_\epsilon$ is dense in the $C^k$ topology.

For Banach space $\mathcal{B}$ with continuous inclusions in  $\mathcal{B}^{\gamma}_{1,1}$, with  $\gamma-\beta > 1/2$ is similar, using Theorem \ref{chain}  and Theorem \ref{leibnitz1} instead.  This completes the proof of $A$.

The proof of $B$ is   easier. Let $v \in \mathcal{B}$. If  $\sigma^2(\psi_v)~=~0$ then
 $$\sigma^2(\psi _{v+uw})= \sigma^2(\psi _{v} +u \psi_w) =u^2 \sigma^2(\psi_w)> 0$$
 for $u > 0$. If $\sigma^2(\psi_{v})> 0$ then  $\sigma^2(\psi_{\hat{v}})> 0$ if $\|\hat{v}-v\|_{\mathcal{B}}$ is small smooth. 
 
 Now suppose  $\log g \in \mathcal{B}^{\gamma+\delta}_{\infty,\infty}$, with $\delta > 0$ and we are going to  prove that  $\mathcal{B}=\mathcal{B}^\gamma_{\infty,\infty}$ and  $\mathcal{B}=\mathcal{B}^\gamma_{1,1}$  satisfy the assumptions of the theorem. 
 
Choose a function  $\phi \in \mathcal{B}^{\gamma+\delta}_{\infty,\infty}$ such that 
$$\int \phi \rho \ dm =0$$ 
and such that it value on the fixed point of $F$ are no zero. Then $\phi$ is not cohomologous to a constant.   So $\sigma^2(\phi) > 0$. Let $\psi $ be the only distribution in $\mathcal{B}^{-s}_{\infty,\infty}$, for every small $s> 0$,  such that    $\phi = \psi \circ F - \psi$.  Then $D^{-\beta} \psi\in  \mathcal{B}^{Re \ \beta-s}_{\infty,\infty} $ and by Theorem \ref{chain2} and Theorem \ref{leibnitz1}  if 
 $$w:=  (D^{-\beta} \psi)\circ F - g^\beta (D^{-\beta} \psi)$$
 then 
 \begin{align*} 
 D^\beta w &= g^{\beta} \left( \psi\circ F  -\psi \right) + R_C(D^{-\beta} \psi) -R_{L}(D^{-\beta} \psi) \in\mathcal{B}^{\gamma +\delta - Re \beta  -\epsilon}_{\infty,\infty} 
 \end{align*} 
 for every small $\epsilon> 0$, so  $$w\in \mathcal{B}^{\gamma +\delta/2}_{\infty,\infty} \subset \mathcal{B}^\gamma_{\infty,\infty}\cap  \mathcal{B}^\gamma_{1,1}$$ and $\phi_w= \phi$, so $\sigma^2(\phi_w) > 0$.
\end{proof}

\begin{proposition} Let $q\in \{1,\infty\}$, $\gamma, \beta_0 \in \mathbb{C}$ and $\epsilon > 0$. Then for $\beta \in \mathbb{C}$  satisfying  $|Re \ \beta-Re \ \beta_0|< \delta$ 
$$D^\beta\colon \mathcal{B}^{\gamma}_{q,q} \rightarrow \mathcal{B}^{\gamma -  Re \ \beta_0 -\delta}_{q,q}  $$
is a bounded linear transformation and $\beta\mapsto  D^\beta$ is  complex analytic.
\end{proposition}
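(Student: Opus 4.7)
The strategy is to exploit the fact that $D^\beta$ acts diagonally on the unbalanced Haar wavelet basis, with multipliers $\lambda_P(\beta) = |P|^{-\beta}$, each of which is an entire function of $\beta$. I would organize the proof into two steps: uniform boundedness on the strip $\{|\mathrm{Re}\,\beta - \mathrm{Re}\,\beta_0| < \delta\}$, followed by complex analyticity.

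For the first step, I would write $\psi \in \mathcal{B}^\gamma_{1,1}$ in the normalized basis as $\psi = d_0 \mathbf{1}_I + \sum_{k}\sum_{P\in\mathcal{P}^k} d_P |P|^\gamma \phi_P$, so that
\[
D^\beta \psi \;=\; \sum_{k}\sum_{P\in\mathcal{P}^k} d_P\, |P|^{\gamma - \beta}\, \phi_P.
\]
Re-expanding this in the normalized basis of $\mathcal{B}^{\gamma - \mathrm{Re}\,\beta_0 - \delta}_{1,1}$, the new coefficients are $d_P' = d_P\, |P|^{\mathrm{Re}\,\beta_0 + \delta - \beta}$, so $|d_P'| = |d_P|\, |P|^{\mathrm{Re}\,\beta_0 + \delta - \mathrm{Re}\,\beta}$ (the imaginary part of $\beta$ contributes only a phase). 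The assumption $|\mathrm{Re}\,\beta - \mathrm{Re}\,\beta_0| < \delta$ makes the exponent strictly positive, and since $|P|$ is bounded above by $|I|$, the factor $|P|^{\mathrm{Re}\,\beta_0 + \delta - \mathrm{Re}\,\beta}$ admits a uniform bound $C$ on the strip. Summing in $P$ gives $\|D^\beta \psi\|_{\mathcal{B}^{\gamma - \mathrm{Re}\,\beta_0 - \delta}_{1,1}} \leq C \|\psi\|_{\mathcal{B}^\gamma_{1,1}}$. The case $q = \infty$ is identical with $\sup_P$ in place of $\sum_P$.

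For the analyticity step, I would expand $|P|^{-\beta} = |P|^{-\beta_0} \sum_{n \geq 0} \frac{(\beta_0-\beta)^n (\log|P|)^n}{n!}$, which leads to the formal series $D^\beta = \sum_{n\geq 0} (\beta_0 - \beta)^n T_n$, where $T_n$ acts in the Haar basis by multiplication by $|P|^{-\beta_0}(\log|P|)^n/n!$. Arguing as in the boundedness step, the operator norm of $T_n$ is controlled by
\[
M_n \;:=\; \sup_{P} |P|^\delta\, \frac{|\log|P||^n}{n!}.
\]
Calculus on $x \mapsto x^\delta(-\log x)^n$, which attains its maximum at $x = e^{-n/\delta}$, together with Stirling's formula, gives $M_n \lesssim 1/(\delta^n \sqrt{n})$. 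This yields convergence of the operator-valued series on the disk $|\beta - \beta_0| < \delta$, and thus analyticity in a neighborhood of $\beta_0$. The same estimate carried out around any $\beta_1$ in the strip (with $\delta$ replaced by the distance from $\mathrm{Re}\,\beta_1$ to the boundary of the $\mathrm{Re}\,\beta$-interval) gives analyticity at every interior point of the strip.

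Alternatively, and more cleanly, I would observe that for each fixed $\psi$ and each continuous linear functional $\ell$ on the target space, the sum $\ell(D^\beta\psi) = \sum_P d_P\, |P|^{-\beta}\, \ell(\phi_P)$ is a uniformly convergent series of entire functions on compact subsets of the strip (by Step 1), hence holomorphic; Dunford's theorem (weak holomorphy combined with uniform boundedness implies norm holomorphy) then promotes this to operator-norm analyticity. I expect the boundedness step to be the only real work; the main care must be taken in distinguishing the role of the strict inequality $|\mathrm{Re}\,\beta - \mathrm{Re}\,\beta_0| < \delta$ (which provides the positive exponent on $|P|$) from the phase factor contributed by $\mathrm{Im}\,\beta$, which never causes any trouble.
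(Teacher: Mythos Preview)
Your proposal is correct and follows essentially the same route as the paper: both arguments Taylor-expand the diagonal multipliers $|P|^{-\beta}$ about a base point, define the Taylor coefficient operators $T_n$ acting by $|P|^{-\beta_0}(\log|P|)^n$, and bound their norms via the elementary calculus estimate $\sup_{0<x\le 1} x^{\delta}|\log x|^n=(n/\delta)^n e^{-n}$, giving radius of convergence $\delta$. Your alternative via weak holomorphy plus local boundedness is also valid and is in fact the content of the last proposition in the paper's appendix.
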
 
\begin{proof} It is enough to show that $\beta \mapsto D^\beta$ is complex analytic on the region $|\beta -\beta_1|< \delta$ for every $\beta_1$ satisfying $Re \ \beta_1=Re \ \beta_0$.  We will prove it for $q=1$, the other case is nearly identical. For $\psi\in \mathcal{B}^{\gamma}_{1,1}$ we have 
$$\psi = c_I 1_I+ \sum_n \sum_{P\in \mathcal{P}^n} c_P |P|^{\gamma}\phi_P,$$
$$D^\beta \psi =   \sum_n \sum_{P\in \mathcal{P}^n} c_P |P|^{\gamma-\beta}\phi_P,$$
with
$$ \|\psi\|_{\mathcal{B}^{\gamma}_{1,1} }= |c_I| + \sum_n \sum_{P\in \mathcal{P}^n} |c_P|,$$

We have 
$$\left. \frac{d^k |P|^{\beta}}{d\beta^k} \right|_{\beta=\beta_1} = (\log |P|)^k |P|^{\beta_0},$$
and there is $C$ such that for every $P\in \mathcal{P}$ 

So, in the sense of distributions 

\begin{equation} \label{taylor} D^\beta \psi    = \sum_{n=0}^\infty (\beta -\beta_1)^k \frac{T_k\psi}{k!},\end{equation} 
where 
$$T_k\psi =  \sum_n \sum_{P\in \mathcal{P}^n} c_P (\log |P|)^k |P|^{\gamma-\beta_1}\phi_P. $$
Since 
\begin{align*} \|T_k\psi \|_{\mathcal{B}^{\gamma-Re \ \beta_0-\delta}_{1,1} } &=    
   \sum_n \sum_{P\in \mathcal{P}^n} |c_P| |\log |P||^k |P|^{\delta}\leq \left(\frac{k}{\delta} \right)^ke^{-k}  \|\psi\|_{\mathcal{B}^{\gamma}_{1,1} },
\end{align*} 
that implies that the radius of convergence of (\ref{taylor}) in $\mathcal{L}( \mathcal{B}^{\gamma}_{1,1} ,\mathcal{B}^{\gamma - Re \  \beta_0 -\delta}_{1,1} )$ is $\delta$. 
\end{proof}

\begin{proof}[Proof of Theorem \ref{dic3}]  First suppose $\beta \in (0,\gamma) \mapsto v_\beta \in  \mathcal{B}^\gamma_{\infty,\infty}$ is real analytic. Fix a small $\epsilon > 0$. By Theorem \ref{main4}.A there is a bounded linear operator
$$S_\beta\colon \mathcal{B}^{\beta-\epsilon}_{\infty,\infty}\rightarrow  \mathcal{B}^{\operatorname{Re} \ \beta-\epsilon}_{\infty,\infty}$$
such that $\alpha_\beta =S_\beta(v_\beta)$ is the unique bounded solution of 
$$v_\beta= \alpha_\beta\circ F - g^\beta \alpha_\beta$$
Moreover for every $\beta_0$  there is $\delta \in (0,\beta_0)$ and  $\Cll{ub}$ such that 
$$|S_{\beta}|_{\mathcal{B}^{\operatorname{Re} \beta_0-\epsilon}_{\infty,\infty}} \leq \Crr{ub}$$
for every complex  $\beta$ such that $|\operatorname{Re}  \beta-\operatorname{Re} \beta_0|< \delta$. Note that 

$$\alpha_\beta=S_\beta(v_\beta) = \sum_{k=0}^\infty  -\frac{v_\beta\circ F^k}{g^\beta_{k+1}}$$
where $$g_k^\beta(x):= \Pi_{j=0}^{k-1} g^\beta\circ F^j(x).$$
Note that  $\beta\mapsto \alpha_\beta(x) $  is $C^\infty$. 
Indeed one  can  prove by induction 

$$\partial_\beta^n v_\beta + \sum_{k=0}^{n-1} \binom{n-1}{k} (\ln g)^{n-k}g^\beta (\partial_\beta^k\alpha_\beta)  =  (\partial_\beta^n \alpha_\beta)\circ F - g^\beta (\partial_\beta^n \alpha_\beta),$$
$$\partial_\beta^n \alpha_\beta = S_\beta \left (\partial_\beta^n v_\beta + \sum_{k=0}^{n-1} \binom{n-1}{k} (\ln g)^{n-k}g^\beta (\partial_\beta^k\alpha_\beta) \right),$$
and
$$|\partial_\beta^n \alpha_\beta|_{\scriptscriptstyle \mathcal{B}^{\operatorname{Re}\beta_0-\epsilon}_{\infty,\infty}}\leq  C n! \lambda^n,$$
for some $ C > 0$ and $\lambda > 1$,  so  in particular  $\beta\mapsto \alpha_\beta \in \mathcal{B}^{\operatorname{Re}\beta_0-\epsilon}_{\infty,\infty}$ is complex analytic around $\beta_0$. So $$\beta\mapsto \phi_\beta= D^\beta \alpha_\beta \circ F -  D^\beta \alpha_{\beta}  \in \mathcal{B}^{ -\epsilon}_{\infty,\infty}$$ is also  complex analytic. In particular  for every $P\in \mathcal{P}$ we have 
$$\beta\mapsto  c_{-\epsilon} (\phi_\beta,P)$$
is complex analytic. On the other hand the  Chain and Leibnitz rules  implies that  there is $C> 0$ such that 
$$\| \phi_\beta\|_{\mathcal{B}^{\gamma - \operatorname{Re} \beta_0-\epsilon}_{\infty,\infty}}  \leq C$$
for a complex $\beta$  close to $\beta_0$, so  
 $$\beta\mapsto  c_{\gamma - \operatorname{Re} \beta_0-\epsilon} (\phi_\beta,P)= |P|^{-\gamma + \operatorname{Re} \beta_0} c_{-\epsilon} (\phi_\beta,P)$$
 is complex analytic and 
 $|c_{\gamma - \operatorname{Re} \beta_0-\epsilon} (\phi_\beta,P)|\leq C$, hence  by Cauchy's estimate it follows that 
 $\beta \mapsto \phi_\beta$ is complex analytic on $\mathcal{B}^{\gamma - \operatorname{Re} \beta_0-\epsilon}_{\infty,\infty}$. So $\beta\mapsto \sigma^2( \phi_\beta)$ is real-analytic. 

The proof for the case $v\in  \mathcal{B}^\gamma_{1,1}$ is similar, replacing Theorem \ref{main4}.A by Theorem \ref{azaz}.

In particular either $\sigma^2(\phi_\beta)=0$ for every $\beta \in (0,\gamma)$  or  just in some isolated points. By Theorems \ref{dic11} and \ref{dic1} we completed  the proof, except for the statement that $0$ is not an accumulation point  when $v_\beta \in  \mathcal{B}^\gamma_{\infty,\infty}$.

Let  $\rho_{1+\beta}$ be as in Theorem \ref{dic333}.C. Let  
$$w_{\beta}= 1- g^{\beta}= 1_I\circ F - g^{\beta} \cdot 1_I$$
and define 
\begin{equation} v_\beta =  v \int \frac{w_\beta}{g^\beta} \rho_{1+\beta} \ dm  -   w_\beta \int \frac{v }{g^\beta}  \rho_{1+\beta} \ dm. \label{lincomb} \end{equation} 
Then  $v_0=0$, $\beta \in (-\delta,\delta)\mapsto v_\beta \in \mathcal{B}^\gamma_{\infty,\infty}$  is real analytic, and 
\begin{equation*}\label{eq:integral-signs}
\begin{aligned}
\int \frac{v_\beta}{g^\beta}  \, \rho_{1+\beta} \, dm &= 0,
&\qquad &\text{for every } \beta  \sim 0 ,\\
\int \frac{w_\beta}{g^\beta}   \, \rho_{1+\beta}\, dm &< 0,
&\qquad &\text{for every } \beta > 0, \ \beta \sim 0.
\end{aligned}
\end{equation*}
Define the distribution  $\hat{\alpha}_\beta \in \mathcal{B}^{-s}_{1,1}$  as 

$$\langle \hat{\alpha}_\beta, \psi\rangle  =- \sum_{k=1}^\infty \int \frac{v_\beta\circ F^{n-1}(x)}{ g_n^\beta(x)} \psi(x) \ dm.$$
By Theorem \ref{dic333}.C we have that $ \hat{\alpha}_\beta$ is well-defined  and it is the unique solution of  $v= \hat{\alpha}_\beta \circ F - g^\beta \hat{\alpha}_\beta$ in  $\mathcal{B}^{-s}_{1,1}$ for every $\beta\sim 0$, $\beta\neq 0$.
  In particular every $L^1$  solution coincides with $\hat{\alpha}_\beta$, so for $\beta > 0$, $\beta\sim 0$ we have that $\hat{\alpha}_\beta$ coincides with the only bounded function that satisfies the twisted cohomological  equation.

As before we can prove that $\beta\mapsto \hat{\alpha}_\beta \in  \mathcal{B}^{-s}_{1,1}$ is complex analytic for every small  $s> 0$, so it follows that $\beta \mapsto \phi_{\beta, v_\beta}= (D^\beta \hat{\alpha}_\beta) \circ F - D^\beta \hat{\alpha}_\beta \in \mathcal{B}_{1,1}^{-\delta}$ is complex analytic for $\beta$ close to $0$ and  every $\delta > 0$. The Chain and Leibnitz rules (Theorems \ref{chain} and \ref{leibnitz}) given $\delta > 0$ there is $C> 0$ such that for $\beta$ close to $0$ we have 

$$\phi_{\beta, v_\beta} = \frac{D^\beta v_\beta + R_C(\hat{\alpha}_\beta) -R_{L}( \hat{\alpha}_\beta)}{g^\beta}$$
and $|\phi_{\beta, v_\beta}|_{\scriptscriptstyle \mathcal{B}^{\gamma-\delta}_{1,1}}\leq C$, that implies that $\beta\mapsto \phi_{\beta, v_\beta}$ is complex analytic in $\mathcal{B}^{\gamma-\delta}_{1,1}$, so $\beta \mapsto \sigma^2(\phi_{\beta, v_\beta})$ is real analytic   for real $\beta$ close to $0$. Finally note that due  Theorem \ref{dic11} and (\ref{lincomb}) we have that if $\beta >0$, $\beta \sim 0$ and $\beta > 0$  then $ \sigma^2(\phi_{\beta, v_\beta})=0$ if and only  if $\sigma^2(\phi_{\beta, v })=0$ . So if $0$ is an accumulation point of 
$$\Lambda=\left\{ \beta\in (0,\gamma)\colon \sigma^2(\phi_{\beta, v })=0\right\} =  \left\{ \beta\in (0,\gamma)\colon \sigma^2(\phi_{\beta, v_\beta })=0\right\}$$
it  follows that  $\Lambda =(0,\gamma)$. 
\end{proof}

\begin{proof}[Proof of \ref{mainex}] \  \\
{\it Proof of itens A-D.} Let $p\in \mathbb{S}^1$ be the fixed point of $F$. Then $F^{-n}\{p\}$ defines a partition $\mathcal{P}^n$ of $\mathbb{S}^1$ consisting on arcs. Let $m$ be the Haar measure of $\mathbb{S}^1$. $\mathcal{P}$ is a good grid for the measure space $(\mathbb{S}^1,m)$. Indeed this grid has a nicer property: it is a quasisymmetric grid, that is, there is $C> 1$ such that 

$$\frac{1}{C}\leq  \frac{|P_1|}{|P_2|}\leq C$$
for  every  adjacent $P_1,P_2\in \mathcal{P}^n$, with $n\geq 0$. Since the assumptions of  Theorem  \ref{dic4} holds for $\mathcal{B}^\gamma_{1,1}$ we have that there is $w\in \mathcal{B}^\gamma_{1,1}$ such that $\alpha_w$ {\it does not}  satisfies   I -V in Theorem  \ref{dic1}.  In particular $\sigma(D^\beta \alpha_w \circ F -  D^\beta \alpha_{w} )> 0$. Note that 
$$v\in \mathcal{B}^\gamma_{1,1} \mapsto \sigma(D^\beta \alpha_v \circ F -  D^\beta \alpha_v )$$
is continuous.  By S. \cite[Theorem 8.1]{smaniajga} we have that $\mathcal{B}^\gamma_{1,1}$, with $s > 0$, coincides with the classical Besov space $B^\gamma_{1,1}$ of $\mathbb{S}^1$ , so in particular $C^\infty(\mathbb{S}^1)$ is dense in  $\mathcal{B}^\gamma_{1,1} $, so we can replace $w$  for a  function in $C^\infty(\mathbb{S}^1)$ if necessary. As a consequence we can apply Theorems \ref{dic2}, \ref{dic4} to $\mathcal{B}=C^k(\mathbb{S}^1)$. We can also apply Theorem \ref{dic3}. 

{\it Proof of item $E.$} Note that if $\sigma(D^\beta \alpha_v \circ F -  D^\beta \alpha_{v} )> 0$  then Theorem \ref{dic11}.IV does not holds, that easily implies that $\alpha_v$ satisfies the {\it anti-H\"older condition}, that is, there is $C > 0$ such that for every arc $J\subset \mathbb{S}^1$ we have
$$\sup_{\substack{x,y\in J}}|\alpha_v(y)-\alpha_v(x)|\geq C |J|^{\beta}.$$
By Przytycki and  Urba{\'n}ski \cite[Theorem 4]{fu} we conclude E. 
\end{proof}

\begin{proof}[Proof of Theorem \ref{mainex5}] It is enough to show that if $1$ is an accumulation point of $\Lambda$ then $\Lambda=(0,1)$. 
Indeed  consider the solution  $\alpha_{1+\beta}$, with $\beta \in \mathbb{R}$, $\beta \sim 0$  for
$$v = \alpha_{1+\beta} \circ F - (DF)^{1+\beta} \cdot \alpha_{1+\beta},$$
Choose a small $\delta > 0$.   By Theorem \ref{main4}.I one can prove that 
$$\beta \mapsto \alpha_{1+\beta} \in \mathcal{B}^{1-\delta}_{\infty,\infty}$$
is real analytic.   Define
$$u_\beta= \frac{Dv + (1+\beta) (DF)^{\beta} D^2F \cdot  \alpha_{1+\beta}  }{DF} \in C^\gamma(\mathbb{S}^1)\subset \mathcal{B}^\gamma_{\infty,\infty}.$$

Note that if $\beta\sim 0$, $\beta\neq 0$ then we have that   $1$ is not  in the spectrum of $$L_{1+\beta}\colon C^{1+\gamma}(\mathbb{S}^1) \rightarrow  C^{1+\gamma}(\mathbb{S}^1) $$ so  it follows that  $\eta_\beta = D\alpha_{1+\beta}$ the only  distribution in $C^{1+\gamma}(\mathbb{S}^1)^\star$ that satisfies 

  $$u_\beta =  \eta_\beta \circ F - (DF)^\beta  \eta_\beta,$$
 
By  an argument as in the proof of Theorem \ref{dic3}, for every small $\delta> 0$ and $\epsilon> 0$  then if $\beta \neq 0$ is small enough   there is only one distributional solution $\hat{\eta}_\beta \in \mathcal{B}^{-\delta}_{1,
1}\subset C^{1+\gamma}(\mathbb{S}^1)^\star$ 
such that 
$$u_\beta = \hat{\eta}_\beta \circ F - (DF)^\beta \hat{\eta}_\beta,$$
so in particular $\hat{\eta}_\beta  =D\alpha_{1+\beta}.$ Moreover 
$$\phi_{u_\beta,\beta} = (D^\beta \hat{\eta}_\beta) \circ F - D^\beta \hat{\eta}_\beta,$$
where
$$\phi_{u_\beta,\beta}=  \frac{D^\beta u_\beta,\beta + R_C(\hat{\eta}_\beta) -R_{L}( \hat{\eta}_\beta)}{(DF)^\beta} \in \mathcal{B}^{\gamma-\epsilon}_{1,
1}$$   
  and there is a real-analytic function $\beta\mapsto \sigma^2_\beta$ defined close to $\beta =0$ such that for $\beta \sim 0$, $\beta \neq 0$ we have $\sigma^2(\phi_{u_\beta,\beta})=0$ if and only if $\sigma^2_\beta=0$.

We claim that if  $v\not\in \Omega_{1+\beta}$  for some small $\beta < 0$ then  $\alpha_{1+\beta} \in C^{1+\gamma}$ so  
$$u_\beta=  (D\alpha_{1+\beta})\circ  F -  (DF)^{\beta} \cdot  D\alpha_{1+\beta}.$$ 
so $ \hat{\eta}_\beta= D\alpha_{1+\beta},$ and consequently $D^\beta D\alpha_{1+\beta}\in \mathcal{B}^{\gamma-\beta}_{\infty,\infty}$ satisfies 
$$\phi_{u_\beta,\beta} = (D^\beta D\alpha_{1+\beta})\circ F - D^\beta D\alpha_{1+\beta}$$
which implies $\sigma^2 _\beta= \sigma^2(\phi_{u_\beta,\beta})=0$. So if $\Lambda$ has an accumulation point at $1$ we conclude that $\sigma^2(\phi_{u_\beta,\beta})=0$ for every small $\beta < 0$. That implies $D^\beta D\alpha_{1+\beta} = D^\beta \hat{\eta}_\beta \in \mathcal{B}^{\gamma-\epsilon}_{1,
1}$ and $D\alpha_{1+\beta} = \hat{\eta}_\beta \in \mathcal{B}^{\gamma+\beta -\epsilon}_{1,
1}\subset L^1,$   so $\alpha_{1+\beta}$ is almost everywhere differentiable, so it is indeed $C^1$ and $v \not\in \Omega_{1+\beta}$ for every small $\beta< 0$, and consequently for every $\beta\in (0,1)$.

To prove the last statement of the theorem we use a method similar to   Amanda  de Lima and S. \cite[Lemma 4.1 and Theorem 1.1]{ls}. Suppose that  $\beta \geq 1/2+ \gamma/2$. By Theorem \ref{main4}.I there is $C > 0$ such that 
$\|\alpha_{v,\beta}\|_{ C^{1/2+ \gamma/2}}\leq C$
for every such $\beta$. Define $\Delta^2_h u (x) = u(x+h)+u(x-h) - 2u(x)$
and
$$\omega(u,x,h) = \frac{\Delta^2_h u (x)}{|h|^{1+\gamma}}.$$
Using the Taylor series of order $2$ for $F$ we get 
$$\alpha_{v,\beta}(F(x\pm h))= \alpha_{v,\beta}(F(x)\pm DF(x)h)+ O(|h|^{1+\gamma}),$$
so 
\begin{align*} & \omega(\alpha_{v,\beta}\circ F,x,h)  = (DF(x))^{1+\gamma} \omega(\alpha_{v,\beta},F(x),DF(x)h) + O(1).\end{align*} 
On the other hand 
\begin{align*} &\alpha_{v,\beta}(F(x\pm h)) = v(x\pm h)+ (DF(x\pm h))^\beta \alpha_{v,\beta}(x\pm h) \\
&= v(x\pm h)+ \left( (DF(x))^\beta \pm   \beta (DF(x))^{\beta -1}D^2F(x)h + O(|h|^{1+\gamma})  \right) \alpha_{v,\beta}(x\pm h)  \\
&= v(x\pm h)+   (DF(x))^\beta  \alpha_{v,\beta}(x\pm h) \pm   \beta (DF(x))^{\beta -1}D^2F(x)h\alpha_{v,\beta}(x)   +  O(|h|^{1+\gamma})
\end{align*} 
and since  $v\in C^{1+\gamma}$ we obtain
$$\omega(\alpha_{v,\beta}\circ F,x,h) = (DF(x))^{\beta} \omega( \alpha_{v,\beta},x,h)+O(1).$$
We conclude that 
 $$\left| (DF(x))^{1+\gamma-\beta}  \omega(\alpha_{v,\beta},F(x),DF(x)h) - \omega( \alpha_{v,\beta},x,h)\right| \leq C_1.$$

 So let  $\lambda = \min_{x\in \mathbb{S}^1}  |Df(x)|^{1+\gamma-\beta_0}$ and 
 $$C_2= \frac{C_1}{\lambda-1}$$
 we have that if $L > 0$ then 
  $$|\omega(\alpha_{v,\beta},F(x),DF(x)h)|\geq C_2+L \implies | \omega( \alpha_{v,\beta},x,h) |\geq C_2+(DF(x))^{1+\gamma-\beta}   L.$$

  So suppose that  there is $\beta_n \in (0,1)$, with  $v\not\in \Omega_{\beta_n}$ and $\lim_n \beta_n =1$.  Then $$ |\omega( \alpha_{v,\beta_n},x,h) |\leq C_2$$
for every $x\in \mathbb{S}^1$ and $h> 0$. Indeed if $|\omega(\alpha_{v,\beta},y_0,h_0)| > C_2$ for some $y_0$ and $h_0 > 0$ then  for every $y_n$ such that $F^n(y_n)=y_0$ we have 
  $$\omega\left(  \alpha_{v,\beta}, y_n, \frac{h_0}{DF^n(y_n)} \right) \geq L (DF^n(y_n))^{1+\gamma-\beta} $$
so there is $C> 0$ such that 
$$\Delta\left(  \alpha_{v,\beta}, y_n, \frac{h_0}{DF^n(y_n)} \right) \geq L (DF^n(y_n))^{1+\gamma-\beta}  \left( \frac{h_0}{DF^n(y_n)} \right)^{1+\gamma} \geq C \left( \frac{h_0}{DF^n(y_n)} \right)^{\beta}.$$
So by Theorem \ref{dic11} it follows that $v \in \Omega_\beta$, that contradicts the assumption on $v$.

This implies $ |\omega( \alpha_{v,1},x,h) |\leq C_2$ for every $x\in \mathbb{S}^1$ and $h> 0$ and consequently $\alpha_{v,1}\in C^{1+\gamma}$. It follows from \cite{ls} that $(Dv+D^2F
\cdot \alpha_{v,1})/DF$ is cohomologous to zero, that contradicts the  assumption.
\end{proof} 

\begin{proof}[Proof of Theorem \ref{MTB}]
We recall that the classical Besov space $B^s_{1,1}(\mathbb{S}^1)$ coincides with the Besov space considered here \cite{smaniajga}, and that $C^\infty$ functions are dense in this space. Therefore, the argument used in the proof of Theorem~\ref{mainex} applies here as well, \emph{mutatis mutandis}.
\end{proof}

\section{Appendix}

\subsection{Notation}

Elements of the grid will be denoted by capital letters such as $P$, $Q$, $W$, $J$, and so on. We will use the notation
$$
\phi_P^s = |P|^s \phi_P, \quad 1_P^s = |P|^s 1_P,
$$
where $|P|$ denotes the measure of the element $P$ of the grid, and $\phi_P$ and $1_P$ denote functions supported on $P$. Real and complex numbers will be denoted by lowercase letters such as $z$, $c$, $d$, etc. These scalars will often be indexed by elements of the grid, as in $z_P$, $c_Q$, $d_W$, and so forth.

In many proofs throughout the appendix, we will frequently manipulate expressions involving multiple nested sums indexed by elements of the grid $\mathcal{P}$. To simplify the notation and improve readability, we adopt a variation of the {\it Einstein summation convention}, adapted to our grid context. According to this convention, whenever a capital letter such as $P$, $Q$, $J$, $W$, etc., appears as an index, it is understood to be a {\it bound variable} of an implicit summation over elements of the grid. For example, instead of writing the full expression
$$
\sum_{k=0}^\infty \sum_{P\in \mathcal{P}^k} c_P |P|^s \phi_P,
$$
we will simply write
$$
c_P \phi_P^s.
$$

When multiple nested sums appear, we group terms into {\it blocks}, and the position of each implicit summation is determined by the first appearance of the corresponding index. For example, we write
$$
c_P \phi_P^s z_Q 1_Q^\gamma
$$
instead of
$$
\sum_{k=0}^\infty \sum_{P \in \mathcal{P}^k} c_P |P|^s \phi_P \left( \sum_{j=0}^\infty \sum_{Q \in \mathcal{P}^j} z_Q |Q|^\gamma 1_Q \right).
$$
When we wish to restrict an inner sum to elements $Q$ that are {\it strictly}  contained in the immediately preceding bound variable $P$, we indicate this using a \textit{subscript modifier}, such as $Q_+$. For instance,
$$
c_P \phi_P d_{Q_+} 1_{Q_+}
$$
stands for
$$
\sum_{k=0}^\infty \sum_{P \in \mathcal{P}^k} c_P \phi_P \left( \sum_{j > k_0(P)} \sum_{\substack{Q \in \mathcal{P}^j \\ Q \subset P}} d_Q 1_Q \right),
$$
where $k_0(P)$ denotes the unique level such that $P \in \mathcal{P}^{k_0(P)}$. Analogously, we write
$$
c_P \phi_P d_{Q_-} 1_{Q_-}
$$
to represent
$$
\sum_{k=0}^\infty \sum_{P \in \mathcal{P}^k} c_P \phi_P \left( \sum_{j < k_0(P)} \sum_{\substack{Q \in \mathcal{P}^j \\ P \subset Q}} d_Q 1_Q \right).
$$
In this case, the second sum is {\it finite}. We also introduce further subscripts to indicate others {\it restricted finite summations}. For instance, $P_{\text{ch}}$ will denote a summation over the {\it children} of $P$ (just two elements), and we write
$$
c_P \phi_P d_{P_{\text{ch}}}
$$
to mean
$$
\sum_{k=0}^\infty \sum_{P \in \mathcal{P}^k} c_P \phi_P \left( \sum_{\substack{Q \in \mathcal{P}^{k_0(P)+1} \\ Q \subset P}} d_Q \right).
$$

Similarly, we denote by $P_\star$ the {\it pre-images}  of $P$ under the map $F$, that is, the elements $Q_i \in \mathcal{P}^{k_0(P)+1}$, $i=1,2$,  such that $F(Q_i) = P$. Then,
$$
c_P \phi_P d_{P_\star} 1_{P_\star}
$$
stands for
$$
\sum_{k=0}^\infty \sum_{P \in \mathcal{P}^k} c_P \phi_P \left( \sum_{i=1}^2 d_{Q_i} 1_{Q_i} \right).
$$

Notice that the subscripts $\mathrm{ch}$ and $\star$ follow a slightly different syntax compared to the modifiers $+$ and $-$. In these cases, we refer explicitly to the immediately preceding bound variable, rather than introducing a new capital letter. For example, $P_{\mathrm{ch}}$ is always preceded by the bound variable $P$, $Q_\star$ by $Q$, and so on. We denote by  $par(Q)$   the {\it parent} of $Q$, and $par(Q)$ must {\it not} be seem as a new bound variable.

Finally, we use the notation $O_P$ to denote a quantity that may depend on the element $P$ and on other bound variables that appear {\it  to its left } in the expression. However, this quantity is always {\it uniformly bounded}  by a constant $C$ that is independent of $P$ and of all summation indices. For example,
$$
c_P \phi_P |P|^{-1} d_{P_{\text{ch}}} |P_{\text{ch}}|
= c_P \phi_P d_{P_{\text{ch}}} O_{P_{\text{ch}}},
$$
since the ratio $|P_{\text{ch}}| / |P|$ is uniformly bounded, depending only on the geometric structure of the grid.

In sequences of manipulations using this notation, the symbols $O_P$ may refer to different quantities on opposite sides of an equality. Thus, an expression such as $exp_1 = exp_2$ should be interpreted as shorthand for “$exp_1$ can be viewed as $exp_2$”—not as a genuine equality. In particular, this is not a symmetric relation.

 In our notation, an expression involving only the bound variable \( P \) and constants will be referred to as a \textit{block}, and will be denoted by \( \omega_P \). 

For example,  the expression
\[
c_P \phi_P d_{Q_-} 1_{Q_-},
\]
can be decomposed in  two blocks: \( \omega_P = c_P \phi_P \) and \( \omega_{Q_-} = d_{Q_-} 1_{Q_-} \). Blocks admit natural transformations, such as adding or removing capital letters and /or subscript modifiers uniformly across all their occurrences.  We will use $\omega_P,  \tilde{\omega}_P$ to denote blocks that depend  on $P$. 

We will frequently use the following properties in the manipulation of multiple nested summations. Most of these are straightforward and follow directly from the structure of the notation.

\begin{proposition} Whenever such expressions appear as subexpressions within manipulations involving multiple nested summations, the following substitutions may be applied:
\begin{enumerate}[label=\alph*.]
\item  Terms  inside a block $\omega_P$ are commutable. 
\item 
$|P|^\alpha \phi_P^\beta = |P|^{\alpha+\beta} \phi_P=  \phi_P^{\alpha+\beta}$ and $|P|^\alpha 1_P^\beta = |P|^{\alpha+\beta} 1_P=  1_P^{\alpha+\beta}$.
\item For every $\alpha,\beta \in \mathbb{R}$ we have $$|P|^{\alpha}|P_{ch}|^\beta = |P|^{\alpha +\beta} O_{P_{ch}}=|P_{ch}|^{\alpha +\beta} O_{P_{ch}}$$   $$|P|^{\alpha}|P_{\star}|^\beta = |P|^{\alpha +\beta} O_{P_{\star}}=|P_{\star}|^{\alpha +\beta} O_{P_{\star}}$$
\item  $\omega_P \omega_{W_+}= \omega_W \omega_{P_-}$ and $\omega_P \omega_{W_-}= \omega_W \omega_{P_+}$.
\item  We have
$\omega_P \tilde{\omega}_Q = \omega_P\tilde{\omega}_{Q_+}  + \omega_P \tilde{\omega}_P  + \omega_P\tilde{\omega}_{Q_-} .$
\item  If $\beta > 0$ we have 
$$\omega_P|Q_+|^{1+\beta} = \omega_P|P|^{1+\beta}O_P.$$
\item  If $\beta > 0$ we have 
$$\omega_P|Q_-|^{\beta} = \omega_P O_P,$$
$$\omega_P|Q_-|^{-\beta} = \omega_P |P|^{-\beta} O_P,$$
\item  Let  $\sum_{P\in \mathcal{P}} |z_P|\leq L$.  If $\alpha \in \mathbb{R}$  then  
$$\big| O_Pz_P\phi_P^{\alpha}    \big|_{\mathcal{B}^\alpha_{1,1}}\leq CL,$$
and if $\alpha < 0$ 
$$\big| O_Pz_P1_P^{-1}    \big|_{\mathcal{B}^\alpha_{1,1}}\leq CL.$$
for some $C$ that depends only on $\sup_P |O_P|.$
\item  If   \ $\sum_{P\in \mathcal{P}} |z_P|\leq L$ then  
$$\big| O_W \phi_W |W|^s O_{P_+} |P_+|^{\gamma-s}  z_{par(P_+)}\big|_{\mathcal{B}^\alpha_{1,1}}\leq CL$$ 
for some $C$ that depends only on $\sup_P |O_P|$ and $\sup_W |O_W|$. 
\item  If  \ $\sup_{P\in \mathcal{P}} |z_P|\leq L$ then  
$$\big| O_Pz_P\phi_P^{\alpha+1}    \big|_{\mathcal{B}^\alpha_{\infty,\infty}}\leq CL,$$
$$\big| O_Pz_P1_P^{\alpha}    \big|_{\mathcal{B}^\alpha_{\infty,\infty}}\leq CL.$$
for some $C$ that depends only on $\sup_P |O_P|.$
\item  We have $|z_{\mathop{\mathrm{par}}(P)}|\leq 2 |z_{P}|.$
\item We have
$\omega_P \tilde{\omega}_{P_{ch}} \hat{\omega} = \tilde{\omega}_P \omega_{\mathop{\mathrm{par}}(P)}     \hat{\omega},$
where $\omega_{\mathop{\mathrm{par}}(P)}$ must be seen as part of the block $\tilde{\omega}_P \omega_{\mathop{\mathrm{par}}(P)}$  of the bounded variable $P$.
\item We have
$\omega_P \tilde{\omega}_{P_{\star}} \hat{\omega} = \tilde{\omega}_P \omega_{F(P)}     \hat{\omega},$
where $\omega_{\mathop{\mathrm{F}}(P)}$ must be seen as part of the block $\tilde{\omega}_P \omega_{\mathop{\mathrm{F}}(P)}$  of the bounded variable $P$.
\end{enumerate}
\end{proposition}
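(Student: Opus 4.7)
The proof is a systematic bookkeeping verification: each of (a)--(m) is an identity or inequality about the Einstein-style summation shorthand introduced in the appendix, and I would check them one by one by expanding the shorthand into ordinary nested sums and invoking (i) the definitions $\phi_P^s := |P|^s \phi_P$ and $1_P^s := |P|^s 1_P$, (ii) bounded distortion and uniform expansion of the Markov map $F$, (iii) the geometric growth of partition sizes along ancestor chains together with the fact that each $P$ has exactly two children and each $P$ has exactly two pre-images, and (iv) the two definitions of the Besov norms $\mathcal{B}^\alpha_{1,1}$ and $\mathcal{B}^\alpha_{\infty,\infty}$ recalled earlier in the paper.

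First I would dispatch the purely formal items. (a) is just commutativity of scalar-valued factors inside a single block, and (b) follows from unraveling $\phi_P^\beta = |P|^\beta \phi_P$ and $1_P^\beta = |P|^\beta 1_P$. The reindexing items (d), (e), (l), (m) follow by rewriting double sums in different orders: (d) parametrizes pairs $(P,W)$ with $P\subsetneq W$ either in $P$ first or in $W$ first; (e) partitions the set of pairs $(P,Q)$ into the three disjoint cases $Q\subsetneq P$, $Q=P$, $Q\supsetneq P$; (l) and (m) are bijective renamings of the pair (parent, child) respectively (image, pre-image), both of which are finite $2$-to-$1$ maps. Item (k) is of the same flavor: reading it as a statement about nested sums, $\sum_P |z_{\mathrm{par}(P)}| = 2\sum_Q|z_Q|$ because every $Q$ arises as $\mathrm{par}(P)$ for exactly two values of $P$.

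Items (c), (f), (g) are controlled by expansion. Bounded distortion gives $|P_{\mathrm{ch}}|/|P|,\,|P_\star|/|P|\in[c_1,c_2]\subset(0,1)$ uniformly, which yields (c). For (f), fixing $P$ and splitting the sum $\sum_{Q\subsetneq P}|Q|^{1+\beta}$ by the level difference $j=k_0(Q)-k_0(P)\ge 1$, the descendants at level $j$ partition $P$ (so $\sum_{Q}|Q|=|P|$) and each has $|Q|\le \lambda^{-j}|P|$ up to distortion, giving
\[
\sum_{Q\subsetneq P}|Q|^{1+\beta} \le C\sum_{j\ge 1}\lambda^{-j\beta}|P|^\beta\cdot|P| \le C'|P|^{1+\beta},
\]
and $C'$ is absorbed into $O_P$. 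For (g), the inner sum ranges over the finite chain of ancestors of $P$, whose sizes grow by a factor $\ge\lambda$; summing $|Q|^{\beta}$ geometrically is bounded by a constant, while summing $|Q|^{-\beta}$ geometrically is bounded by $C|P|^{-\beta}$.

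The substantive items are (h), (i), (j). The first halves of (h) and (j) are immediate: $O_P z_P \phi_P^\alpha = \sum O_P z_P|P|^\alpha\phi_P$ is already in the Haar-coefficient form used to define $\mathcal{B}^\alpha_{1,1}$, so the norm is $\sum|O_P z_P|\le \sup_P|O_P|\cdot L$; for $\mathcal{B}^\alpha_{\infty,\infty}$ the normalization $\phi_P^{\alpha+1}=|P|^{\alpha+1}\phi_P$ matches the convention, and the norm becomes $\sup_P|O_P z_P|\le CL$. The second halves, involving $1_P^{-1}$ and $1_P^\alpha$, require expanding the indicator $|P|^{-1}1_P$ in the unbalanced Haar basis: only the constant $1_I$ and the wavelets $\phi_Q$ at ancestors $Q\supseteq P$ contribute, with coefficient $\pm|Q_i^Q|^{-1}=O(|Q|^{-1})$ by bounded distortion. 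Interchanging the summations, the $\mathcal{B}^\alpha_{1,1}$-norm of $O_P z_P 1_P^{-1}$ is dominated by $\sum_P|z_P|\sum_{Q\supseteq P}|Q|^{-1}\cdot|Q|^\alpha$, and since $\alpha<0$ the inner sum is a geometric series of the kind controlled in (g), uniformly bounded in $P$; hence the total is $\le CL$. Item (j)'s second part is analogous using the $\mathcal{B}^\alpha_{\infty,\infty}$ normalization. Item (i) combines these techniques: after moving $z_{\mathrm{par}(P_+)}$ to the child index via (k) and (l), the remaining factor $|P_+|^{\gamma-s}$ times $\phi_W|W|^s$ is again in Haar-coefficient form and bounded as in (h). The only place where any care is needed is the explicit Haar expansion of $1_P$, but this is a well-known computation for unbalanced Haar bases; once in hand, every remaining estimate reduces to the geometric summations already established in (f), (g).
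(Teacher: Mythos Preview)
The paper gives no proof of this proposition; it simply remarks beforehand that ``most of these are straightforward and follow directly from the structure of the notation.'' Your proposal is correct, is the natural way to verify the items, and is in fact considerably more detailed than what the paper offers. The purely formal items (a), (b), (d), (e), (k), (l), (m) are exactly the reindexing observations you describe; the distortion/geometric items (c), (f), (g) are handled correctly by your geometric-series arguments; and for (h), (i), (j) the idea of expanding $1_P/|P|$ in the unbalanced Haar basis and reducing to the geometric sums of (g) is precisely what the paper itself does in the separately stated Proposition \ref{poi} and in the formula (\ref{pipi}).

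One small slip: in your treatment of the second half of (h) you write that the Haar coefficient of $|P|^{-1}1_P$ at an ancestor $Q$ is $\pm|Q_i^Q|^{-1}=O(|Q|^{-1})$. In fact the coefficient of $\phi_Q$ in the expansion of $|P|^{-1}1_P$ is $|Q_j|/|Q|=O(1)$ (this is exactly the content of the paper's formula (\ref{pipi}): $|P|^{-1}1_P = |I|^{-1}1_I + O_{W_-}\phi_{W_-}$ with the $O_{W_-}$ uniformly bounded). With this correction the $\mathcal{B}^\alpha_{1,1}$-norm is dominated by $\sum_P|z_P|\sum_{Q\supsetneq P}|Q|^{-\alpha}$, and for $\alpha<0$ the inner sum is a convergent geometric series over the ancestor chain, uniformly bounded in $P$, exactly as you intended. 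The rest of your argument goes through unchanged.
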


\subsection{Relations between Besov spaces} The proof of the following is easy.
\begin{proposition} For $  s_1< s_2$ we have that $\mathcal{B}^{s_2}_{1,1}\subset \mathcal{B}^{s_1}_{1,1}$ and $\mathcal{B}^{s_2}_{\infty,\infty}\subset \mathcal{B}^{s_1}_{\infty,\infty}$ and those inclusions are continuous. 
\end{proposition}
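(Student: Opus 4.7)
The plan is straightforward: both inclusions follow from re-expressing the unbalanced Haar wavelet expansion of an element $\psi$ in the source space so that the exponents match those of the target space, and then observing that the extra factor $|P|^{s_2-s_1}$ is uniformly bounded. First I would fix $s_1 < s_2$ and recall that, since $I$ is compact with finite reference measure, there is a uniform upper bound $M := \sup_{P\in \cup_k \mathcal{P}^k} |P| \leq |I| < \infty$, so in particular $|P|^{s_2 - s_1} \leq M^{s_2 - s_1}$ for every $P$ in the grid.

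For the inclusion $\mathcal{B}^{s_2}_{\infty,\infty} \subset \mathcal{B}^{s_1}_{\infty,\infty}$, take
\[
\psi = c_0 1_I + \sum_k \sum_{P\in \mathcal{P}^k} c_P \, |P|^{s_2+1} \phi_P \in \mathcal{B}^{s_2}_{\infty,\infty}.
\]
I would rewrite this as
\[
\psi = c_0 1_I + \sum_k \sum_{P\in \mathcal{P}^k} c'_P \, |P|^{s_1+1} \phi_P,
\qquad c'_P := c_P \, |P|^{s_2 - s_1},
\]
and then observe that
\[
|\psi|_{\mathcal{B}^{s_1}_{\infty,\infty}}
 = |c_0| + \sup_{k,P} |c'_P|
 \leq |c_0| + M^{s_2-s_1} \sup_{k,P}|c_P|
 \leq M^{s_2-s_1}|\psi|_{\mathcal{B}^{s_2}_{\infty,\infty}},
\]
which proves continuity of the inclusion.

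The argument for $\mathcal{B}^{s_2}_{1,1} \subset \mathcal{B}^{s_1}_{1,1}$ is entirely analogous: write $\psi = d_0 1_I + \sum d_P |P|^{s_2} \phi_P$ as $d_0 1_I + \sum d'_P |P|^{s_1} \phi_P$ with $d'_P = d_P |P|^{s_2 - s_1}$, and then
\[
|\psi|_{\mathcal{B}^{s_1}_{1,1}}
 = |d_0| + \sum_{k}\sum_P |d'_P|
 \leq |d_0| + M^{s_2 - s_1} \sum_{k}\sum_P |d_P|
 \leq M^{s_2 - s_1}|\psi|_{\mathcal{B}^{s_2}_{1,1}}.
\]
There is no genuine obstacle in this proof; the only minor point worth checking is that the wavelet expansion of $\psi$ in the source space is literally the same formal series one uses in the target space, so that the target-space norm is well-defined and agrees with the one obtained by relabelling the coefficients as above. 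This is immediate from the definitions of $\mathcal{B}^s_{1,1}$ and $\mathcal{B}^s_{\infty,\infty}$, since both use the same Haar basis $\mathcal{H}$.
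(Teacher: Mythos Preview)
Your proof is correct and is exactly the natural argument; the paper itself omits the proof entirely, stating only that it is easy. One trivial remark: the final inequality $|c_0| + M^{s_2-s_1}\sup|c_P| \le M^{s_2-s_1}|\psi|_{\mathcal{B}^{s_2}_{\infty,\infty}}$ tacitly assumes $M^{s_2-s_1}\ge 1$; replacing the constant by $\max(1,M^{s_2-s_1})$ covers all cases, but this does not affect the argument.
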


\begin{proposition} \label{oi} For $\beta <  \gamma$ we have $\mathcal{B}^{\gamma}_{\infty,\infty}\subset  \mathcal{B}^{\beta}_{1,1}$.  Moreover there is $C$ such that  if $P\in \mathcal{P}^{k_0}$ and 
 $$\psi =  \sum_{k\geq k_0}  \sum_{\substack{J\in \mathcal{P}^k\\ J\subset P} } c_J |J|^{\gamma+1} \phi_J, $$
so 
 $$|\psi|_{\mathcal{B}^{\gamma}_{\infty,\infty}}=\sup_{k\geq k_0} \sup_{\substack{J\in \mathcal{P}^k\\ J\subset P} }   |c_J|,$$
then 
$$|\psi|_{\mathcal{B}^{\beta}_{1,1}}\leq C |P|^{1+\gamma -\beta}|\psi|_{\mathcal{B}^{\gamma}_{\infty,\infty}}.$$

\end{proposition}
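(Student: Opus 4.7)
The plan is to reduce the statement to a direct manipulation of the Haar coefficients in the two different normalizations, and then estimate a geometric sum using the expansion of $F$.

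First I would observe that the two Besov norms differ only by the normalization factor on each Haar generator: for $J \in \mathcal{P}^k$ we have $|J|^{\gamma+1}\phi_J = |J|^{\gamma+1-\beta}\cdot(|J|^\beta\phi_J)$. So if
$$\psi = \sum_{k\ge k_0}\sum_{\substack{J\in\mathcal P^k\\ J\subset P}} c_J\,|J|^{\gamma+1}\phi_J,$$
then in the $\mathcal B^{\beta}_{1,1}$ expansion the coefficient of $|J|^\beta\phi_J$ is $d_J=c_J|J|^{\gamma+1-\beta}$, hence
$$|\psi|_{\mathcal B^{\beta}_{1,1}}=\sum_{k\ge k_0}\sum_{\substack{J\in\mathcal P^k\\J\subset P}}|c_J|\,|J|^{\gamma+1-\beta}\le |\psi|_{\mathcal B^{\gamma}_{\infty,\infty}}\sum_{k\ge k_0}\sum_{\substack{J\in\mathcal P^k\\J\subset P}}|J|^{\gamma+1-\beta}.$$

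Next I would estimate the geometric sum. Since $F$ is expanding with respect to $d$ and the Jacobian has bounded distortion, there exist $C>0$ and $\mu\in(0,1)$ such that for every $k\ge k_0$ and every $J\in\mathcal P^k$ with $J\subset P$,
$$|J|\le C\mu^{k-k_0}|P|.$$
Moreover, the elements of $\mathcal P^k$ contained in $P$ form a partition of $P$ (up to $m$-null sets), so $\sum_{J\subset P,\, J\in\mathcal P^k}|J|=|P|$. Writing $|J|^{\gamma+1-\beta}=|J|^{\gamma-\beta}\cdot|J|$ and using $\gamma>\beta$,
$$\sum_{\substack{J\in\mathcal P^k\\ J\subset P}}|J|^{\gamma+1-\beta}\le\bigl(C\mu^{k-k_0}|P|\bigr)^{\gamma-\beta}\sum_{\substack{J\in\mathcal P^k\\ J\subset P}}|J|=C^{\gamma-\beta}\mu^{(k-k_0)(\gamma-\beta)}|P|^{\gamma-\beta+1}.$$
Summing the geometric series in $k-k_0\ge 0$ (convergent since $\mu^{\gamma-\beta}<1$) gives
$$\sum_{k\ge k_0}\sum_{\substack{J\in\mathcal P^k\\ J\subset P}}|J|^{\gamma+1-\beta}\le C'\,|P|^{\gamma+1-\beta},$$
which is the desired estimate.

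Finally, the global embedding $\mathcal B^{\gamma}_{\infty,\infty}\subset\mathcal B^{\beta}_{1,1}$ follows by the same computation applied with $P=I$ (i.e.\ $k_0=0$), accounting separately for the constant term $c_0 1_I$, whose contribution to both norms is just $|c_0|$. The only substantive point is the uniform geometric decay $|J|\le C\mu^{k-k_0}|P|$ for descendants $J$ of $P$, which I do not expect to be the main obstacle since it is a standard consequence of the expansion of $F$ together with bounded distortion of the Hölder Jacobian $g$; everything else is bookkeeping.
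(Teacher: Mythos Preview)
Your proof is correct and follows essentially the same approach as the paper: rewrite the coefficients in the $\mathcal{B}^{\beta}_{1,1}$ normalization as $c_J|J|^{\gamma+1-\beta}=c_J|J|^{\gamma-\beta}\cdot|J|$, bound $|J|^{\gamma-\beta}/|P|^{\gamma-\beta}$ by a geometric factor $\lambda_2^{(k-k_0)(\gamma-\beta)}$, use $\sum_{J\subset P,\,J\in\mathcal P^k}|J|=|P|$, and sum the resulting geometric series. The paper's argument is the same up to notation (it writes the contraction factor as $\lambda_2$ rather than your $\mu$).
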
 
\begin{proof} Note that 
\begin{align*} \psi =  |P|^{\gamma-\beta} \sum_{k\geq k_0}  \sum_{\substack{J\in \mathcal{P}^k\\ J\subset P} } \Big( c_J \frac{|J|^{\gamma-\beta} }{|P|^{\gamma-\beta}} |J| \Big)  |J|^{\beta} \phi_J,
\end{align*} 
and
\begin{align*}  |P|^{\gamma-\beta} \sum_{k\geq k_0}  \sum_{\substack{J\in \mathcal{P}^k\\ J\subset P} } |c_J|  \frac{|J|^{\gamma-\beta} }{|P|^{\gamma-\beta}} |J|  & \leq |P|^{\gamma-\beta}  |\psi|_{\mathcal{B}^{\gamma}_{\infty,\infty}} \sum_{k\geq k_0} |P|   \lambda_2^{(k-k_0)(\gamma-\beta)} \\
& \leq C  |P|^{1+\gamma-\beta}  |\psi|_{\mathcal{B}^{\gamma}_{\infty,\infty}}.
\end{align*} 
\end{proof}

\subsection{Dirac's approximations and  fractional derivatives}
Given $W\in \mathcal{P}^k$ let $W^o$ be  the  sister of $W$, that is, the distinct element of $\mathcal{P}^k$ that has the same parent than $W$.  One can easily prove that  for fixed $P$ (see S. \cite{smania}) 
 
\begin{equation}\label{pipi} \frac{1_P}{|P|}  =\frac{1_I}{|I|} +   |P|^0 O_{W_{-}}  \phi_{W_-}\end{equation}

\begin{proposition} \label{poi} For  $s> 0$  there is $C > 0$ such that if  
$$\phi =  c_P  1_P^{s} $$
with $L= \sup_\ell \sup_{P\in \mathcal{P}^\ell} |c_P|< \infty$ then
$$|\phi|_{\mathcal{B}^{s}_{\infty,\infty}}\leq CL.$$
\end{proposition}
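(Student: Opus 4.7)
The plan is to write $\phi$ in its canonical Haar decomposition and bound the normalized coefficients directly. First I would apply the expansion \eqref{pipi} to each indicator in the defining series $\phi = c_P |P|^s 1_P$. Multiplying \eqref{pipi} by $|P|^{s+1}$ gives
\[
|P|^s 1_P \;=\; \frac{|P|^{s+1}}{|I|}\,1_I \;+\; |P|^{s+1}\, O_{W_-}\, \phi_{W_-},
\]
where the $W_-$-sum is the (finite) sum over ancestors $W \supsetneq P$ and the factors $O_{W_-}$ are uniformly bounded. Substituting this into $\phi = c_P |P|^s 1_P$ and swapping the order of summation so that $W$ becomes the outer index yields the canonical Haar representation
\[
\phi \;=\; \frac{1_I}{|I|}\sum_{P} c_P\,|P|^{s+1} \;+\; \sum_W |W|^{s+1}\, d_W\, \phi_W, \qquad d_W \;:=\; \frac{1}{|W|^{s+1}}\sum_{P \subsetneq W} c_P\, |P|^{s+1}\, O_{W,P}.
\]
It therefore suffices to show that the constant coefficient and every $d_W$ are bounded by a multiple of $L$.

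For the constant coefficient I would combine the identity $\sum_{P\in \mathcal{P}^k} |P| = |I|$ with the expansion estimate $\max_{P\in \mathcal{P}^k} |P| \leq C\lambda_2^k$ (with $\lambda_2 \in (0,1)$), obtaining $\sum_{P\in \mathcal{P}^k}|P|^{s+1} \leq C\lambda_2^{ks}|I|$; the geometric sum over $k$ converges because $s>0$, hence $|I|^{-1}\sum_P |c_P|\,|P|^{s+1} \leq CL$. For $d_W$ the same argument, restricted to the descendants of $W$, gives at each level $\ell > k_0(W)$
\[
\sum_{\substack{P\in \mathcal{P}^\ell \\ P \subset W}} |P|^{s+1} \;\leq\; \Bigl(\max_{\substack{P\in \mathcal{P}^\ell \\ P\subset W}}|P|^s\Bigr)\sum_{\substack{P\in \mathcal{P}^\ell \\ P\subset W}}|P| \;\leq\; C\,\lambda_2^{(\ell - k_0(W))s}\,|W|^{s+1},
\]
so summing the geometric series in $\ell$ yields $\sum_{P\subsetneq W}|P|^{s+1} \leq C'|W|^{s+1}$, and thus $|d_W| \leq C''L$ uniformly in $W$. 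Plugging both bounds into the definition of $|\cdot|_{\mathcal{B}^s_{\infty,\infty}}$ completes the argument.

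I do not expect a genuine obstacle here: the only point requiring care is the interchange of the $P$- and $W$-summations in the second term, which is legitimate once the absolute estimates above have been established. The proof is essentially a bookkeeping exercise on the partition tree coupled with the geometric decay of element sizes provided by the expanding property of $F$.
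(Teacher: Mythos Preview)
Your proposal is correct and follows essentially the same approach as the paper: expand each $1_P^s$ via \eqref{pipi}, swap the $P$- and $W$-summations, and use the geometric decay $\sum_{P\subsetneq W}|P|^{s+1}\le C|W|^{s+1}$ (which is exactly what the paper encodes as property~(f), $\omega_P|Q_+|^{1+\beta}=\omega_P|P|^{1+\beta}O_P$). Your version is simply the paper's three-line block-notation computation written out longhand; you even treat the constant $1_I$-coefficient separately, which the paper's proof silently absorbs.
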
 
\begin{proof} Indeed
\begin{align*} c_P  1_P^s =    c_P  |P|^{1+s}  O_{W_{-}}  \phi_{W_-}  
=    O_{W}  \phi_{W}    c_{P_+} |P_+|^{1+s} =  O_{W}  L \phi_{W}^{1+s}.   \end{align*} 
so  $\big| c_P  1_P^s \big|_{\mathcal{B}^{s}_{\infty,\infty}}     \leq C L.$
\end{proof} 

\begin{proposition} \label{deriapp} We have  for fixed $P$ 
\begin{equation}\label{diracder} 
D^\beta 1_P^{-1}  =   |P|^0  O_{W_{-}}  \phi_{W_-}^{-\beta}.
\end{equation}
Moreover  given  $s,\beta \in \mathbb{R}$,  there is $C$ such  that  
$$\Big|  D^\beta  1_P^{-1} \Big|_{\mathcal{B}^s_{1,1}} \leq   C (\ln |P|)^{(\beta+s)_0}  |P|^{-(\beta+s)_+}. $$
\end{proposition}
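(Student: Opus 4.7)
The plan is to apply $D^\beta$ directly to the identity (\ref{pipi}). Since $D^\beta$ is diagonal in the Haar basis, with $D^\beta 1_I=0$ and $D^\beta\phi_W=|W|^{-\beta}\phi_W$, the constant piece $1_I/|I|$ in (\ref{pipi}) is annihilated while the tail transforms as
$$D^\beta\bigl(|P|^0\,O_{W_-}\,\phi_{W_-}\bigr)=|P|^0\,O_{W_-}\,|W_-|^{-\beta}\,\phi_{W_-}=|P|^0\,O_{W_-}\,\phi_{W_-}^{-\beta},$$
which is precisely (\ref{diracder}). This takes care of the first assertion.

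For the quantitative estimate, I would rewrite $\phi_{W_-}^{-\beta}=|W_-|^{-(\beta+s)}\phi_{W_-}^{s}$, so that the coefficient against the normalized element $|W|^{s}\phi_W$ of the $\mathcal{B}^{s}_{1,1}$ basis equals $O_W|W|^{-(\beta+s)}$. The index $W_-$ ranges over the finite chain of proper ancestors $W_0=I\supset W_1\supset\cdots\supset W_{k_0(P)-1}=\mathrm{par}(P)$, each level containing exactly one ancestor, so
$$\bigl|D^\beta 1_P^{-1}\bigr|_{\mathcal{B}^{s}_{1,1}}\leq C\sum_{j=0}^{k_0(P)-1}|W_j|^{-(\beta+s)}.$$
The expanding nature of $F$ supplies constants $1<\lambda_1\leq\lambda_2$ with $\lambda_1|W_{j+1}|\leq|W_j|\leq\lambda_2|W_{j+1}|$, hence $|W_{k_0(P)-1}|\asymp|P|$, $|W_0|=|I|\asymp 1$, and $k_0(P)\asymp|\ln|P||$.

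The proof concludes by a trichotomy on the sign of $\beta+s$. If $\beta+s>0$, the series is geometric with ratio $\lambda_1^{\beta+s}>1$ and is therefore dominated by its last term, which is comparable to $|P|^{-(\beta+s)}$. If $\beta+s=0$, every term equals $1$ and the total is $k_0(P)\asymp|\ln|P||$. If $\beta+s<0$, the terms decay geometrically from the first, $|W_0|^{-(\beta+s)}=O(1)$, and the sum is uniformly bounded. These three regimes match exactly the claim $C(\ln|P|)^{(\beta+s)_0}|P|^{-(\beta+s)_+}$.

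I do not expect any substantial obstacle here: the content of the proposition really resides in (\ref{pipi}) and in the definition of $D^\beta$, both already established. The only point requiring care is the trichotomy and the bookkeeping of the expansion factors, which is entirely routine.
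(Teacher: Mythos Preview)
Your proposal is correct and follows exactly the paper's approach: apply $D^\beta$ to (\ref{pipi}) to obtain (\ref{diracder}), then rewrite $\phi_{W_-}^{-\beta}=|W_-|^{-(\beta+s)}\phi_{W_-}^{s}$ and bound the resulting finite sum over ancestors. The paper compresses the final estimate into the single line $|P|^0|W_-|^{-(\beta+s)}\leq C(\ln|P|)^{(\beta+s)_0}|P|^{-(\beta+s)_+}$ using its summation convention, while you spell out the trichotomy on the sign of $\beta+s$ that this line encodes; the content is identical.
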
 
\begin{proof} Note that (\ref{diracder}) follows from direct derivation. In particular 
$$D^\beta  1_P^{-1} =  |P|^0  O_{W_{-}}  \phi_{W_-}^{-\beta} =  |P|^0  O_{W_{-}}  |W_-|^{-(\beta+s)} \phi_{W_-}^{s} ,$$
so
$$\Big|  D^\beta  1_P^{-1} \Big|_{\mathcal{B}^s_{1,1}} \leq C  |P|^0    |W_-|^{-(\beta+s)}  \leq C (\ln |P|)^{(\beta+s)_0}  |P|^{-(\beta+s)_+}.$$
\end{proof}

\subsection{ Operators acting  on distribution spaces }  There are two operators that are important in our studies
\begin{proposition}[Multipliers] Given $0< s< \gamma$ and $g\in \mathcal{B}^\gamma_{\infty,\infty}$ we have that 
$$\phi \in \mathcal{B}^{-s}_{1,1}\mapsto  M_g(\psi)= \phi g \in \mathcal{B}^{-s}_{1,1}$$
is a bounded operator in $\mathcal{B}^{-s}_{1,1}$. 
\end{proposition}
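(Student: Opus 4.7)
The plan is to compute the product $\phi g$ directly in the Haar wavelet basis, using the notational conventions of the Appendix. Write
$$\phi = c_0 1_I + d_P \phi_P^{-s}, \qquad g = a_0 1_I + a_Q \phi_Q^{\gamma+1},$$
with $|c_0| + \sum_P |d_P| \le \|\phi\|_{\mathcal{B}^{-s}_{1,1}}$ and $|a_0|, \sup_Q |a_Q| \le C\|g\|_{\mathcal{B}^\gamma_{\infty,\infty}}$. Expand $\phi g$ into four pieces: the constant $c_0 a_0 1_I$; the ``coarse $\times$ fine'' piece $a_0 d_P \phi_P^{-s}$, which is already in the correct $\mathcal{B}^{-s}_{1,1}$ form with norm bounded by $|a_0|\sum |d_P|$; the ``fine $\times$ coarse'' piece $c_0 a_Q \phi_Q^{\gamma+1}$, which, after rewriting $\phi_Q^{\gamma+1} = |Q|^{\gamma+1+s}\phi_Q^{-s}$, is controlled by the geometric sum $\sum_Q |Q|^{\gamma+1+s}$; and the genuinely ``double-wavelet'' piece $\sum_{P,Q} d_P a_Q |P|^{-s}|Q|^{\gamma+1}\phi_P\phi_Q$, which is the heart of the matter.

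For the double sum, partition according to the relative position of $P$ and $Q$ in the grid: (i) $P\cap Q=\emptyset$ (vanishes); (ii) $Q\subsetneq P$, on which $\phi_P$ is constant equal to $\pm|Q_{\mathrm{ch}}^{P}|^{-1}$ on $Q$, whence $\phi_P\phi_Q = O_{Q}|P|^{-1}\phi_Q$; (iii) the symmetric case $P\subsetneq Q$; and (iv) the diagonal $P=Q$, where $\phi_P^2$ is locally constant on each child of $P$, so that the identity (\ref{pipi}) expresses it as an average on $P$ plus Haar components at coarser levels. In cases (ii) and (iii) the product is already proportional to a single Haar wavelet, so only a rescaling of coefficients is needed; in case (iv) one must re-expand an indicator on $P$ into its Haar decomposition.

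In case (ii), the contribution to the coefficient of $|Q|^{-s}\phi_Q$ is $O_{P,Q} d_P a_Q |P|^{-s-1} |Q|^{\gamma+1+s}$; summing over descendants $Q\subsetneq P$ gives
$$\sum_{Q\subsetneq P} |Q|^{\gamma+1+s} \le C|P|^{\gamma+1+s}$$
(which uses $\gamma+s>0$), leaving $O(|d_P|\,\|g\|\,|P|^{\gamma})$; summing over $P$ produces the desired bound $C\|g\|\|\phi\|$. In case (iii), the roles of $P$ and $Q$ are reversed and the geometric summability now requires $\gamma-s>0$, that is, the hypothesis $s<\gamma$; again the sum collapses to $\|g\|\|\phi\|$ up to a constant. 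Case (iv), after applying (\ref{pipi}) to decompose $1_{Q_{\mathrm{ch}}}$ into wavelet components, reduces to bounds of exactly the same form, with each resulting coefficient estimated using the pointwise properties of blocks listed in the Appendix.

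The main obstacle is the bookkeeping in case (iv), since the product of two equally-scaled wavelets must be re-expanded in the Haar basis, producing contributions at all coarser levels via (\ref{pipi}); one must verify that these contributions, when summed against the ($\ell^1$-summable) coefficients $d_P$, reorganize into a single absolutely convergent series bounded by $\|g\|_{\mathcal{B}^\gamma_{\infty,\infty}}\|\phi\|_{\mathcal{B}^{-s}_{1,1}}$. The hypothesis $0<s<\gamma$ is used precisely to guarantee the geometric summability needed in cases (ii) and (iii); the borderline case $s=\gamma$ would fail. Assembling the four cases yields $\|M_g\phi\|_{\mathcal{B}^{-s}_{1,1}} \le C\|g\|_{\mathcal{B}^\gamma_{\infty,\infty}}\|\phi\|_{\mathcal{B}^{-s}_{1,1}}$, completing the proof.
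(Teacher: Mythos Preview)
Your approach is correct and is essentially a re-packaging of the paper's argument. The paper does not expand $g$ globally in the Haar basis; instead it invokes Theorem~\ref{fbeta} to write, for each $P$,
\[
g\,1_P \;=\; m(g,P)\,1_P \;+\; c_P\,\phi_P^{\gamma+1} \;+\; c_{J_+}\,\phi_{J_+}^{\gamma+1},
\]
and then multiplies by $z_P|P|^{-s}\phi_P$. This has the effect of collapsing your cases (iii), (iv), and the constant term $a_0$ into the single bounded coefficient $m(g,P)$, so that only two correction terms survive: the ``diagonal'' piece $c_P|P|^{\gamma+1}\phi_P^2$, which becomes a sum of indicators on the children (your case (iv)), and the fine-scale piece $c_{J_+}\phi_{J_+}^{\gamma+1}$ (your case (ii)). Your explicit four-case bilinear expansion is equivalent but longer; the paper's localisation via Theorem~\ref{fbeta} is what buys the compression.

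One small inaccuracy: the hypothesis $s<\gamma$ is not used in case (iii). There the relevant sum is $\sum_{Q\supsetneq P}|Q|^\gamma$, controlled by $\gamma>0$ alone. The condition $s<\gamma$ actually enters in the diagonal case (iv): after expanding $\phi_P^2$ via (\ref{pipi}), the coefficients carry a factor $|P|^{\gamma-s}$, and boundedness of this requires $\gamma\ge s$. In the paper's organisation this is the estimate $\big|O_P\,1_P^{\gamma-s-1}\,z_{\mathrm{par}(P)}\big|_{\mathcal{B}^{-s}_{1,1}}\le C$, which likewise needs $\gamma-s\ge 0$.
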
 
\begin{proof}  Let $\phi = z_{p} \phi_P^{-s}$. By Theorem \ref{fbeta} we have
 $$z_Pg1_P = z_Pm(g,P)1_P  +   z_Pc_{J_+}(g)   \phi_{J_+}^{\gamma+1} +z_Pc_P(g)   \phi_P^{\gamma+1}$$
 with $|m(g,P)|\leq C|g|_{\mathcal{B}^\gamma_{\infty,\infty}}$ and $|c_J(g)|\leq |g|_{\mathcal{B}^\gamma_{\infty,\infty}}.$ So
  \begin{align*}  &z_P\phi_P^{-s}g  = z_P m(g,P)  \phi_P^{-s}  +   z_P|P|^{-s}c_{J_+}(g)  \phi_{J_+}^{1+\gamma}  + z_P |P|^{-s+1+\gamma} c_P(g)   1_{P_{ch}}^{-2}  \\
  &= O_Pz_P   \phi_P^{-s}|g|_{\mathcal{B}^\gamma_{\infty,\infty}}     + z_P|P|^{-s}  O_{J_+} |J_+|^{1+\gamma+s} \phi_{J_+}^{-s} |g|_{\mathcal{B}^\gamma_{\infty,\infty}}    +z_P  O_{P_{ch}}   1_{P_{ch}}^{\gamma-s-1}|g|_{\mathcal{B}^\gamma_{\infty,\infty}}    \\
&= O_Pz_P   \phi_P^{-s}|g|_{\mathcal{B}^\gamma_{\infty,\infty}}     +    O_{J}  \phi_{J}^{-s}|J|^{1+\gamma+s} O_{P_-} |P_-|^{-s} |\phi|_{\mathcal{B}^s_{1,1}}   |g|_{\mathcal{B}^\gamma_{\infty,\infty}}     +O_{P}   1_{P}^{\gamma-s-1}z_{\mathop{\mathrm{par}}(P)} |g|_{\mathcal{B}^\gamma_{\infty,\infty}} \\
&= O_Pz_P   \phi_P^{-s}|g|_{\mathcal{B}^\gamma_{\infty,\infty}}     +    O_{J}  \phi_{J}^{-s} |J|^{1+\gamma} |\phi|_{\mathcal{B}^s_{1,1}}   |g|_{\mathcal{B}^\gamma_{\infty,\infty}}     + O_{P}   1_{P}^{\gamma-s-1}z_{\mathop{\mathrm{par}}(P)} |g|_{\mathcal{B}^\gamma_{\infty,\infty}}.
  \end{align*}

 Note that 

 $$\Big|O_Pz_P   \phi_P^{-s}|g|_{\mathcal{B}^\gamma_{\infty,\infty}}  \Big|_{\mathcal{B}^{-s}_{1,1}}\leq C|g|_{\mathcal{B}^\gamma_{\infty,\infty}} |\phi|_{\mathcal{B}^s_{1,1}} .$$
  $$\Big|O_{J}  \phi_{J}^{-s} |J|^{1+\gamma} |\phi|_{\mathcal{B}^s_{1,1}}   |g|_{\mathcal{B}^\gamma_{\infty,\infty}}    \Big|_{\mathcal{B}^{-s}_{1,1}}\leq C   |g|_{\mathcal{B}^\gamma_{\infty,\infty}}  |\phi|_{\mathcal{B}^s_{1,1}}.$$
  $$\Big| O_{P}   1_{P}^{\gamma-s-1}z_{\mathop{\mathrm{par}}(P)} |g|_{\mathcal{B}^\gamma_{\infty,\infty}}  \Big|_{\mathcal{B}^{-s}_{1,1}} \leq C   |g|_{\mathcal{B}^\gamma_{\infty,\infty}}  |\phi|_{\mathcal{B}^s_{1,1}}.$$
\end{proof}

\begin{lemma} \label{koo} For fixed $P$ we have 
\begin{align*} &\phi_P\circ F =\  C_{P_\star} \phi_{P_\star}+E_{P_\star}\frac{1_{P_\star}}{|P_\star|}\end{align*} 
where  
  \begin{equation}\label{wwww} \sup_P  |C_{P_\star}|< \infty \text{ and } |E_{P_\star}|\leq C|P|^{\gamma}.\end{equation} 

\end{lemma}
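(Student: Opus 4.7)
The plan is to decompose $\phi_P\circ F$ one preimage at a time and read off the coefficients by solving an explicit $2\times 2$ linear system. Fix a preimage $R\in F^{-1}(P)$. By the indexing convention used to define $\phi_P$, the branch $F\colon R\to P$ sends $Q_i^R$ bijectively onto $Q_i^P$ for $i=1,2$, so
\[
\phi_P\circ F\cdot 1_R \;=\; \frac{1_{Q_1^R}}{|Q_1^P|}-\frac{1_{Q_2^R}}{|Q_2^P|},
\]
and $\phi_P\circ F$ vanishes off $F^{-1}(P)=\bigsqcup_R R$; thus it suffices to treat each preimage separately. The pair $\{\phi_R,\,1_R/|R|\}$ spans the $2$-dimensional space of functions on $R$ constant on each child, so writing $\phi_P\circ F\cdot 1_R = C_R\phi_R+E_R\,1_R/|R|$ and matching on $Q_i^R$ gives
\[
C_R=\frac{|P|\,|Q_1^R|\,|Q_2^R|}{|R|\,|Q_1^P|\,|Q_2^P|},\qquad
E_R=\frac{|Q_1^R|\,|Q_2^P|-|Q_1^P|\,|Q_2^R|}{|Q_1^P|\,|Q_2^P|}.
\]

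Next, I would feed in the Jacobian identity $\int_{Q_i^R} g\,dm = |Q_i^P|$ (and similarly $\int_R g\,dm=|P|$). Since $g$ is continuous on the compact space $I$ and satisfies $g>1$, the ratios $|Q_i^P|/|Q_i^R|$ and $|P|/|R|$ are bounded above and below by constants independent of $P$ and $R$; the bound $\sup_P|C_{P_\star}|<\infty$ is then immediate from the formula for $C_R$.

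The crucial step is the estimate on $E_R$. Writing $\bar g_W := |W|^{-1}\int_W g\,dm$, the numerator of $E_R$ factors as
\[
|Q_1^R|\,|Q_2^P|-|Q_1^P|\,|Q_2^R|
\;=\;|Q_1^R|\,|Q_2^R|\,\bigl(\bar g_{Q_2^R}-\bar g_{Q_1^R}\bigr),
\]
exhibiting it as a \emph{difference of averages} of $g$ over two subsets of the single element $R$. Since $Q_1^R,Q_2^R\subset R$, the metric definition (\ref{metricdef}) yields $d(x,y)\leq|R|$ for all $x,y\in R$; combined with the $\gamma$-Hölder regularity of $g$ this gives $|\bar g_{Q_2^R}-\bar g_{Q_1^R}|\leq C|R|^\gamma$. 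Using the lower bound $|Q_i^P|\geq g_{\min}|Q_i^R|$ to cancel the prefactor and the expansion bound $|R|\leq\lambda^{-1}|P|$ to convert $|R|^\gamma$ into $|P|^\gamma$ finishes the estimate $|E_R|\leq C|P|^\gamma$.

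The main (conceptual) obstacle is not the algebra but the gain in the error term: a naive decomposition yields only $|E_R|=O(1)$. The improvement to $|P|^\gamma$ comes precisely from rewriting the numerator of $E_R$ as a difference of $g$-averages over subsets of the small set $R$, so that Hölder regularity of the Jacobian $g$ can be brought to bear; everything else is bounded distortion and the expanding property of $F$, which are already in force in the setup.
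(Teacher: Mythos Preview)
Your argument is correct and follows essentially the same route as the paper's own proof: both decompose $\phi_P\circ F$ preimage by preimage, solve the same $2\times 2$ system to obtain the identical formulas $C_R=\frac{|P|\,|Q_1^R|\,|Q_2^R|}{|R|\,|Q_1^P|\,|Q_2^P|}$ and $E_R=\frac{|Q_1^R|}{|Q_1^P|}-\frac{|Q_2^R|}{|Q_2^P|}$, and then appeal to bounded distortion and the H\"older regularity of $g$. Your treatment is in fact more explicit---the paper simply asserts the bounds ``due to bounded distortion and the H\"older continuity of $g$'', whereas you spell out the key rewriting of the numerator as $|Q_1^R||Q_2^R|(\bar g_{Q_2^R}-\bar g_{Q_1^R})$, which is exactly the mechanism that converts H\"older regularity into the $|P|^\gamma$ gain.
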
 
\begin{proof} Indeed, note that if $Q_1$ and $Q_2$ are the children of $P$ 
$$\phi_P\circ F=   \frac{1_{Q_1}\circ F}{|Q_1|} -\frac{1_{Q_2}\circ F}{|Q_2|}. $$
 So
\begin{align*} &\phi_P\circ F=\sum_{j=0}^1 \Big( \frac{1_{Q_1^j} }{|Q_1|} -\frac{1_{Q_2^j}}{|Q_2|} \Big) =\sum_{j=0}^1 C_P^j\Big( \frac{1_{Q_1^j}}{|Q_1^j|}  -   \frac{1_{Q_2^j}}{|Q_2^j|}\Big)   +E_P^j \frac{1_{P^j}}{|P^j|}=\sum_{j=0}^1 C_P^j \phi_{P^j}+E_P^j \frac{1_{P^j}}{|P^j|}.
 \end{align*} 
Here 
 $$|C_P^j|= \frac{|P| |Q_2^j| |Q_1^j|}{|Q_1||Q_2||P^j|} \text { and } E_P^j=\frac{|Q_1^j|}{|Q_1|} -\frac{ |Q_2^j|}{|Q_2|}.$$
 Note that due the bounded distortion of the map $F$ and the H\"older continuity of $g$  we have (\ref{wwww}).
\end{proof}

\begin{proposition}[Koopmann operator] Let $F$ be  an  expanding map as in the introduction. Given $0< s <  \gamma$ we have that 
$$\phi \in \mathcal{B}^{-s}_{1,1}\mapsto  \mathcal{K}(\phi)= \phi\circ F \in \mathcal{B}^{-s}_{1,1}$$
is a bounded operator in $\mathcal{B}^{-s}_{1,1}$. 
\end{proposition}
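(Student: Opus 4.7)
\medskip
\noindent\textbf{Proof proposal.} The plan is to verify the bound on the $\mathcal{B}^{-s}_{1,1}$-norm basis-element by basis-element. I will write $\phi = z_0 1_I + z_P\phi_P^{-s}$ with $|z_0| + \sum_P |z_P| = |\phi|_{\mathcal{B}^{-s}_{1,1}}$; since $1_I\circ F = 1_I$ the constant part is trivial, so all the work is in controlling $\phi_P^{-s}\circ F$.

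By Lemma \ref{koo}, $\phi_P\circ F = C_{P_\star}\phi_{P_\star} + E_{P_\star}\frac{1_{P_\star}}{|P_\star|}$ with $\sup_P|C_{P_\star}| < \infty$ and $|E_{P_\star}| \leq C|P|^\gamma$. After multiplying by $|P|^{-s}$ and re-expressing in the $(-s)$-normalized Haar basis, the first summand becomes of the form $O_{P_\star}\phi_{P_\star}^{-s}$, since bounded distortion gives $|P_\star|/|P|$ uniformly bounded above and below. For the Dirac-like term, I will apply (\ref{pipi}) to write $\frac{1_{P_\star}}{|P_\star|} = \frac{1_I}{|I|} + O_{W_-}\phi_{W_-}$, the sum running over strict ancestors $W$ of $P_\star$; after renormalizing each $\phi_{W_-}$ to $\phi_{W_-}^{-s}$, its coefficient is of the order $|P|^{\gamma-s}|W|^s$.

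To get the $\ell^1$-bound I will sum these three contributions against the weights $|z_P|$. The smooth (Haar-to-Haar) piece sums to at most $C\sum_P|z_P|$ because each target $P_\star$ arises from the unique $P = F(P_\star)$. The Dirac-constant piece contributes to the coefficient of $1_I$ a total mass bounded by $C\sum_P|z_P|\,|P|^{\gamma-s}/|I| \leq C'\sum_P|z_P|$, using $\gamma - s > 0$ and $|P| \leq |I|$. For the Dirac-Haar piece, the key observation is that for each fixed $P$ the inner ancestor sum $\sum_{W\supsetneq P_\star}|W|^s$ is a geometric series (since $|W|$ decays at a uniform geometric rate along ancestor chains of an expanding Markov map), hence bounded by a constant independent of $P$. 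Combined with $|P|^{\gamma-s}\leq|I|^{\gamma-s}$, the total is again $\leq C''\sum_P|z_P|$, which yields $|\phi\circ F|_{\mathcal{B}^{-s}_{1,1}} \leq C|\phi|_{\mathcal{B}^{-s}_{1,1}}$.

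The main subtlety will be the bookkeeping between the three nested summations: the Haar expansion of $\phi$, the ancestor expansion of each Dirac arising from Lemma \ref{koo}, and the final regrouping by target element in the Haar expansion of $\phi\circ F$. I expect that the Einstein-like notation of the appendix makes this essentially mechanical once the two quantitative inputs are pinned down: (i) the bounded-distortion estimate $|P_\star|\asymp|P|$, and (ii) the geometric decay of ancestor sizes that forces $\sum_{W\supsetneq P_\star}|W|^s$ to converge uniformly in $P$. The hypothesis $0 < s < \gamma$ enters only through keeping the prefactor $|P|^{\gamma-s}$ bounded via $|P|\leq|I|$.
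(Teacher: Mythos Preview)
Your proposal is correct and follows essentially the same approach as the paper. The paper's proof is simply more terse: it reduces to bounding $|\phi_P^{-s}\circ F|_{\mathcal{B}^{-s}_{1,1}}$ uniformly in $P$, invokes Lemma~\ref{koo} for the same Haar-plus-Dirac decomposition you use, and then handles the Dirac term $|P_\star|^{-s}\,1_{P_\star}/|P_\star|$ by citing Proposition~\ref{deriapp} (with $\beta=0$, Besov index $-s$), which packages exactly the ancestor expansion and geometric-series bound you spell out by hand via~(\ref{pipi}). One minor verbal slip: along ancestor chains $|W|$ \emph{increases} toward $|I|$, not decays, but your conclusion that $\sum_{W\supsetneq P_\star}|W|^s$ is bounded uniformly (by comparison with the geometric series $\sum_j \theta^{js}|I|^s$) is correct.
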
 
\begin{proof}
 It is enough to prove that there exists $C$ such that 
$$|\mathcal{K}(|P|^{-s}\phi_P)|_{\mathcal{B}^{-s}_{1,1}}\leq C$$
for every $P\in \cup_k  \mathcal{P}^{k_0}$ and every $k_0$. Since there is $C$ such that 
$$ \frac{|P^j|^s}{|P|^s} \leq C,$$
by Lemma \ref{koo}  and Proposition \ref{deriapp} 
\begin{align*} \Big| |P|^{-s} \phi_P\circ F \Big|_{\mathcal{B}^{-s}_{1,1}} &=\Big| \sum_{j=0}^1 \Big( C_P^j  \frac{|P^j|^s}{|P|^s}    |P^j|^{-s} \phi_{P^j}+  E_P^j \frac{|P^j|^s}{|P|^s}  |P^j|^{-s} \frac{1_{P^j}}{|P^j|} \Big)\Big|_{\mathcal{B}^{-s}_{1,1}}\leq C.\end{align*} 
\end{proof}

By Theorem \ref{fbeta} we have
 \begin{equation}  g^{\beta}1_{P^j} = \big(\frac{|P|}{|P^j|}\big)^\beta1_{P^j} + d_{P^j} 1_{P^j}  +  \sum_{ J\subset P^j}  c_J |J|^{\gamma+1} \phi_J, \end{equation} 

 with $|d_{P^j}|\leq C|g^{\beta}|_{\mathcal{B}^\gamma_{\infty,\infty}}|P^j|^\gamma$ and $|c_J|\leq |g^{\beta}|_{\mathcal{B}^\gamma_{\infty,\infty}}.$
Consequently by Lemma \ref{koo}

\begin{lemma} \label{q23q23} For every $s,\beta$  \begin{align*} 
  & z_P(D^\beta |P|^s\phi_P)\circ Fg^{\beta}  =  z_PD^\beta (|P|^s\phi_P\circ F) \\
  &+ \overbrace{  z_P O_{P_{\star}}  |P_{\star}|^{s+\gamma}D^\beta(\frac{1_{P_{\star}}}{|P_{\star}|})}^{I_{s,\beta}} +\overbrace{   z_PO_{P_{\star}}    1_{P_{\star}}^{s-\beta+\gamma-1}    }^{   II_{s,\beta}} \nonumber   \\
&+\overbrace{   z_P O_{P_{\star}}  \phi_{P_{\star}}^{s-\beta+\gamma}   +   z_PO_{P_{\star}}  \phi_{P_{\star}}^{s-\beta}   c_{J_+}  \phi_{J_+}^{\gamma+1}}^{III_{s,\beta}}   +\overbrace{   z_PO_{P_{\star}}  |P_{\star}|^{s-\beta+\gamma-1}   c_{J_+}  \phi_{J_+}^{\gamma +1} }^{IV_{s,\beta}}. \nonumber 
 \end{align*} 
\end{lemma}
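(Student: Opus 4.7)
The plan is a direct calculation: expand both sides in Haar coordinates, identify the canceling leading term, and verify that every residual term fits one of the four blocks $I_{s,\beta}, II_{s,\beta}, III_{s,\beta}, IV_{s,\beta}$.

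First, I would rewrite the LHS using $D^\beta(|P|^s\phi_P)=|P|^{s-\beta}\phi_P$ so that $z_P(D^\beta|P|^s\phi_P)\circ F\cdot g^\beta = z_P|P|^{s-\beta}(\phi_P\circ F)\,g^\beta$, and then apply Lemma \ref{koo} to replace $\phi_P\circ F$ by $C_{P_\star}\phi_{P_\star}+E_{P_\star}\frac{1_{P_\star}}{|P_\star|}$, using (\ref{wwww}) plus bounded distortion $|P|\asymp|P_\star|$ to write $C_{P_\star}=O_{P_\star}$ and $E_{P_\star}=O_{P_\star}|P_\star|^\gamma$. On the other side, since $D^\beta\phi_{P_\star}=|P_\star|^{-\beta}\phi_{P_\star}$, the term to be subtracted is $z_PD^\beta(|P|^s\phi_P\circ F) = z_P|P|^sC_{P_\star}|P_\star|^{-\beta}\phi_{P_\star} + z_P|P|^sE_{P_\star}D^\beta(\tfrac{1_{P_\star}}{|P_\star|})$.

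Next, I would expand $g^\beta$ on each pre-image $P_\star$ using the decomposition displayed just above the lemma,
$$g^\beta 1_{P_\star} = (|P|/|P_\star|)^\beta 1_{P_\star} + d_{P_\star} 1_{P_\star} + c_{J_+}|J_+|^{\gamma+1}\phi_{J_+},$$
with $|d_{P_\star}|\le C|P_\star|^\gamma$ and $|c_{J_+}|\le C$, and multiply out. The $(|P|/|P_\star|)^\beta$ piece produces $z_P|P|^s|P_\star|^{-\beta}(C_{P_\star}\phi_{P_\star}+E_{P_\star}\tfrac{1_{P_\star}}{|P_\star|})$; subtracting the first RHS term, the $C_{P_\star}\phi_{P_\star}$ part cancels exactly, and the $E_{P_\star}\tfrac{1_{P_\star}}{|P_\star|}$ part minus the subtracted $z_P|P|^sE_{P_\star}D^\beta(\tfrac{1_{P_\star}}{|P_\star|})$ contributes exactly $II_{s,\beta}-I_{s,\beta}$, after absorbing $E_{P_\star}=O_{P_\star}|P_\star|^\gamma$ and $|P|^s\asymp|P_\star|^s$.

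The remaining contributions come from the two subleading pieces of $g^\beta$. The $d_{P_\star}1_{P_\star}$ piece, multiplied against $C_{P_\star}\phi_{P_\star}$, yields $z_PO_{P_\star}|P_\star|^{s-\beta+\gamma}\phi_{P_\star}=z_PO_{P_\star}\phi_{P_\star}^{s-\beta+\gamma}$, which is the first half of $III_{s,\beta}$; multiplied against $E_{P_\star}\tfrac{1_{P_\star}}{|P_\star|}$ it produces a block of exponent $s-\beta+2\gamma-1$, absorbable into $II_{s,\beta}$ since $|P_\star|^\gamma$ is bounded. The Haar tail $c_{J_+}|J_+|^{\gamma+1}\phi_{J_+}$ times $\phi_{P_\star}$, using that $\phi_{P_\star}$ is piecewise constant of size $O_{P_\star}/|P_\star|$ on the support of $\phi_{J_+}$, gives the second half of $III_{s,\beta}$, namely $z_PO_{P_\star}\phi_{P_\star}^{s-\beta}c_{J_+}\phi_{J_+}^{\gamma+1}$; multiplied by $E_{P_\star}\tfrac{1_{P_\star}}{|P_\star|}$ it gives $IV_{s,\beta}=z_PO_{P_\star}|P_\star|^{s-\beta+\gamma-1}c_{J_+}\phi_{J_+}^{\gamma+1}$, where the extra $|P_\star|^\gamma$ factor comes from $E_{P_\star}$.

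The main obstacle is bookkeeping rather than analysis: every exponent of $|P_\star|$ has to match exactly, with bounded distortion used freely to trade $|P|$ and $|P_\star|$ powers. The only genuine cancellation is the leading one, ensured by the fact that the leading coefficient in the $g^\beta$-expansion is precisely $(|P|/|P_\star|)^\beta$; every other correction carries an explicit $|P_\star|^\gamma$ gain (coming from $E_{P_\star}$, $d_{P_\star}$, or their product), which is what produces the extra smoothing encoded in the four error blocks of the statement.
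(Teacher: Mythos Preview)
Your proposal is correct and follows exactly the route the paper intends: the paper gives no explicit proof of this lemma, merely writing ``Consequently by Lemma~\ref{koo}'' after the Haar expansion of $g^\beta 1_{P^j}$, and you have correctly filled in those details. The decomposition via Lemma~\ref{koo}, the expansion of $g^\beta$ on each $P_\star$, the exact cancellation of the $(|P|/|P_\star|)^\beta C_{P_\star}\phi_{P_\star}$ leading piece against the first RHS term, and the identification of the residual blocks $I$--$IV$ are all as the paper has in mind.
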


\subsection{Proof of Chain and Leibniz product rules}
Since $||P|^\beta| = |P|^{\operatorname{Re} \beta}$, all estimates that hold for $\operatorname{Re} \beta$ also hold for $\beta$. Hence, without loss of generality, we may assume that $\beta \in \mathbb{R}$.

\begin{proof}[Proof of Chain Rule I (Theorem \ref{chain2})]
Every function $\psi \in \mathcal{B}_{\infty,\infty}^s$ can be written as
$$
\psi = z_0 1_I + z_P \phi_P^{s+1},
$$
where
$$
|\psi|_{\scriptscriptstyle \mathcal{B}^s_{\infty, \infty}} = |z_0| + \sup_k \sup_{P \in \mathcal{P}^k} |z_P|.
$$
Note that $D^\beta (z_01_I\circ F)= D^\beta (z_01_I)\circ Fg^\beta =0$. 
By Lemma \ref{koo}, we have  \change{we used $\gamma + s> 0$} 
\begin{align*}
z_P D^\beta(\phi_P^{s+1} \circ F)
&= z_P |P|^{s+1} C_{P_\star} |P_\star|^{-\beta} \phi_{P_\star}
+ z_P |P|^{\gamma+s+1} O_{P_\star} D^\beta \, 1_{P_\star}^{-1} \notag \\
&= z_P |P|^{s+1} C_{P_\star} |P_\star|^{-\beta} \phi_{P_\star}
+ O_P |P|^{\gamma+s+1} O_{P_\star} O_{W_{-}} \phi_{W_-}^{-\beta} |\psi|_{\scriptscriptstyle \mathcal{B}^s_{\infty, \infty}} \notag \\
&= z_P |P|^{s+1} C_{P_\star} |P_\star|^{-\beta} \phi_{P_\star}
+ O_W \phi_W^{-\beta} O_{P_+} |P_+|^{\gamma+s+1} |\psi|_{\scriptscriptstyle \mathcal{B}^s_{\infty, \infty}} \notag \\
&= z_P |P|^{s+1} C_{P_\star} |P_\star|^{-\beta} \phi_{P_\star}
+ O_W \phi_W^{\gamma+s-\beta+1} |\psi|_{\scriptscriptstyle \mathcal{B}^s_{\infty, \infty}}.
\end{align*}
Therefore,
$$
\left| O_W \phi_W^{\gamma+s-\beta+1} |\psi|_{\scriptscriptstyle \mathcal{B}^s_{\infty, \infty}} \right|_{\scriptscriptstyle \mathcal{B}^{\gamma+s-\beta}_{\infty, \infty}} \leq C |\psi|_{\scriptscriptstyle \mathcal{B}^s_{\infty, \infty}}.
$$
Moreover,
\begin{align*}
\left| z_P |P|^{s+1} C_{P_\star} |P_\star|^{-\beta} \phi_{P_\star} \right|_{\scriptscriptstyle \mathcal{B}^{s-\beta}_{\infty, \infty}} 
= \left| O_P \phi_P^{s+1-\beta} |\psi|_{\scriptscriptstyle \mathcal{B}^s_{\infty, \infty}} \right|_{\scriptscriptstyle \mathcal{B}^{s-\beta}_{\infty, \infty}} 
\leq C |\psi|_{\scriptscriptstyle \mathcal{B}^s_{\infty, \infty}}.
\end{align*}
By Lemma \ref{q23q23}, we also have
\begin{equation*}
z_P \left(D^\beta \phi_P^{s+1}\right) \circ F \cdot g^\beta
= z_P D^\beta(\phi_P^{s+1} \circ F)
- I_{s+1,\beta} + II_{s+1,\beta} + III_{s+1,\beta} + IV_{s+1,\beta}.
\end{equation*}
Now we estimate each term. Using  (\ref{diracder}), \change{we used $s-\beta+\gamma> 0$} 
\begin{align*}
I_{s+1,\beta}
&= z_P O_{P_{\star}} |P_{\star}|^{s+\gamma+1} D^\beta\left( 1_{P_{\star}}^{-1} \right) \nonumber\\
&= |\psi|_{\scriptscriptstyle \mathcal{B}^s_{\infty,\infty}} O_P |P|^{s+\gamma+1} D^\beta\left(1_P^{-1}\right) \nonumber\\
&= |\psi|_{\scriptscriptstyle \mathcal{B}^s_{\infty,\infty}} O_W \phi_W^{s-\beta+\gamma+1}.
\end{align*}
Thus,
$$
\left| I_{s+1,\beta} \right|_{\scriptscriptstyle \mathcal{B}^{s-\beta+\gamma}_{\infty,\infty}} \leq C |\psi|_{\scriptscriptstyle \mathcal{B}^s_{\infty,\infty}}.
$$
For the second term,\change{we used $s-\beta+\gamma> 0$} 
\begin{align*}
\left| II_{s+1,\beta} \right|_{\scriptscriptstyle \mathcal{B}^{s-\beta+\gamma}_{\infty,\infty}} 
= \left| z_P O_{P_{\star}} 1_{P_{\star}}^{s-\beta+\gamma} \right|_{\scriptscriptstyle \mathcal{B}^{s-\beta+\gamma}_{\infty,\infty}} 
\leq C |\psi|_{\scriptscriptstyle \mathcal{B}^s_{\infty,\infty}}.
\end{align*}
Now for the third term,
\begin{align*}
III_{s+1,\beta}
&= z_P O_{P_{\star}} \phi_{P_{\star}}^{s-\beta+\gamma+1}
+ z_P O_{P_{\star}} \phi_{P_{\star}}^{s-\beta+1} c_{J_+} \phi_{J_+}^{\gamma+1} \notag \\
&= O_P \phi_P^{s-\beta+\gamma+1} z_{F(P)}
+ O_P \phi_P^{s-\beta+1} z_{F(P)} c_{J_+} \phi_{J_+}^{\gamma+1} \notag \\
&= O_P \phi_P^{s-\beta+\gamma+1} |\psi|_{\scriptscriptstyle \mathcal{B}^s_{\infty,\infty}} 
+ O_J \phi_J^{(s-\beta)_- - \epsilon + \gamma + 1} |\psi|_{\scriptscriptstyle \mathcal{B}^s_{\infty,\infty}} |g^\beta|_{\scriptscriptstyle \mathcal{B}^\gamma_{\infty,\infty}}.
\end{align*}
Therefore,
$$
\left| III_{s+1,\beta} \right|_{\scriptscriptstyle \mathcal{B}^{\gamma + (s-\beta)_- - \epsilon}_{\infty,\infty}} \leq C |\psi|_{\scriptscriptstyle \mathcal{B}^s_{\infty,\infty}} |g^\beta|_{\scriptscriptstyle \mathcal{B}^\gamma_{\infty,\infty}}.
$$
Finally, for the last term,
\begin{align*}
IV_{s+1,\beta}
&= z_P O_{P_{\star}} |P_{\star}|^{s-\beta+\gamma} c_{J_+} \phi_{J_+}^{\gamma+1} \notag \\
&= O_J \phi_J^{\gamma+1} |\psi|_{\scriptscriptstyle \mathcal{B}^s_{\infty,\infty}} |g^\beta|_{\scriptscriptstyle \mathcal{B}^\gamma_{\infty,\infty}},
\end{align*}
so that
$$
\left| IV_{s+1,\beta} \right|_{\scriptscriptstyle \mathcal{B}^{\gamma}_{\infty,\infty}} \leq C |\psi|_{\scriptscriptstyle \mathcal{B}^s_{\infty,\infty}} |g^\beta|_{\scriptscriptstyle \mathcal{B}^\gamma_{\infty,\infty}}.
$$
\end{proof}

\begin{proof}[Proof of Chain Rule II (Theorem \ref{chain})]
Every function $\psi \in \mathcal{B}_{1,1}^s$ can be written as
$$
\psi = z_0 1_I + z_P \phi_P^s,
$$
where
$$
|\psi|_{\scriptscriptstyle \mathcal{B}^s_{1,1}} = |z_0| + |z_P|.
$$
Note that $D^\beta (z_01_I\circ F)= D^\beta (z_01_I)\circ Fg^\beta =0$. 
By Lemma \ref{koo}, we have
\begin{align*}
z_P D^\beta(\phi_P^s \circ F)
&= z_P |P|^s C_{P_\star} |P_\star|^{-\beta} \phi_{P_\star}
+ z_P |P|^{\gamma + s} O_{P_\star} D^\beta 1_{P_\star}^{-1} \notag \\
&= z_P |P|^s C_{P_\star} |P_\star|^{-\beta} \phi_{P_\star}
+ |P|^{\gamma + s} O_P z_{F(P)} D^\beta 1_P^{-1}.
\end{align*}
By Proposition \ref{deriapp}, we estimate the second term: \change{$s+\gamma > 0$} 
$$
\left| |P|^{\gamma + s} O_P z_{F(P)} D^\beta 1_P^{-1} \right|_{\scriptscriptstyle \mathcal{B}^{s - \beta + \gamma}_{1,1}} 
\leq C |z_{F(P)}|  C (\ln |P|)^{(s+\gamma)_0}  |P|^{(s+\gamma)_-}  \leq C |\phi|_{\scriptscriptstyle \mathcal{B}^s_{1,1}},
$$
since $s +\gamma > 0$. On the other hand, by Lemma \ref{q23q23}, we have
$$
z_P \left(D^\beta (|P|^s \phi_P)\right) \circ F \cdot g^\beta 
= z_P D^\beta(\phi_P^s \circ F) + I_{s,\beta} + II_{s,\beta} + III_{s,\beta} + IV_{s,\beta}.
$$
By Proposition \ref{deriapp}, the first term satisfies
\begin{align*}
\left| I_{s,\beta} \right|_{\scriptscriptstyle \mathcal{B}^{s - \beta + \gamma}_{1,1}} 
&= \left| z_P O_{P_\star} |P_\star|^{s+\gamma} D^\beta  1_{P_\star}^{-1}  \right|_{\scriptscriptstyle \mathcal{B}^{s - \beta + \gamma}_{1,1}} \nonumber \\
&\leq |z_P| O_{P_\star} (\ln |P_\star|)^{(s+\gamma)_0}  |P_\star|^{(s+\gamma)_-}    \leq C |\phi|_{\scriptscriptstyle \mathcal{B}^s_{1,1}}.
\end{align*}
For the second term, \change{$s-\beta+\gamma> 0$, $ s+\gamma > 0$} 
\begin{align}
\left| II_{s,\beta} \right|_{\scriptscriptstyle \mathcal{B}^{s - \beta + \gamma}_{1,1}} 
&= \left| z_P O_{P_\star} 1_{P_\star}^{s - \beta + \gamma - 1} \right|_{\scriptscriptstyle \mathcal{B}^{s - \beta + \gamma}_{1,1}} \nonumber \\ 
&\leq |z_P| O_{P_\star} (\ln |P_\star|)^{(s - \beta + \gamma )_0}  |P_\star|^{(s - \beta + \gamma )_-} \leq C |\phi|_{\scriptscriptstyle \mathcal{B}^s_{1,1}}.\nonumber
\end{align}
For the third term,
\begin{align}
III_{s,\beta}
&= z_P O_{P_\star} \phi_{P_\star}^{s - \beta + \gamma}
+ z_P O_{P_\star} \phi_{P_\star}^{s - \beta} c_{J_+} \phi_{J_+}^{\gamma + 1} \notag \\
&= z_P O_{P_\star} \phi_{P_\star}^{s - \beta + \gamma}
+ z_P O_{P_\star} |P_\star|^{(s - \beta)_+ +  \hat{\epsilon}} \left( \frac{|J_+|}{|P_\star|} \right)^{-(s - \beta)_- + \hat{\epsilon}  + 1} \phi_{J_+}^{\gamma + (s - \beta)_- -  \hat{\epsilon}}. \nonumber
\end{align}
We estimate the two components:
\begin{align*}
\left| z_P O_{P_\star} \phi_{P_\star}^{s - \beta + \gamma} \right|_{\scriptscriptstyle \mathcal{B}^{\gamma + s - \beta}_{1,1}} 
\leq |z_P| O_{P_\star} \leq C |\phi|_{\scriptscriptstyle \mathcal{B}^s_{1,1}},
\end{align*}
and
\begin{align*}
&\left| z_P O_{P_\star} |P_\star|^{(s - \beta)_+ + \hat{\epsilon}} \left( \frac{|J_+|}{|P_\star|} \right)^{-(s - \beta)_- + \hat{\epsilon} + 1} \phi_{J_+}^{\gamma + (s - \beta)_- - \hat{\epsilon}} \right|_{\scriptscriptstyle \mathcal{B}^{\gamma + (s - \beta)_- - \hat{\epsilon}}_{1,1}} \\
&\leq |z_P| O_{P_\star} |P_\star|^{(s - \beta)_+ + \hat{\epsilon}} \leq C |\phi|_{\scriptscriptstyle \mathcal{B}^s_{1,1}}.
\end{align*}
Finally, for the fourth term,
\begin{align*}
IV_{s,\beta}
&= z_P O_{P_\star} |P_\star|^{s - \beta + \gamma - 1} c_{J_+} \phi_{J_+}^{\gamma + 1} \notag \\
&= z_P O_{P_\star} |P_\star|^{(s - \beta)_+ + \hat{\epsilon} + \gamma} \left( \frac{|J_+|}{|P_\star|} \right)^{-(s - \beta)_- + 1 + \hat{\epsilon}} \phi_{J_+}^{\gamma + (s - \beta)_- - \hat{\epsilon}}.
\end{align*}
We conclude that
$$
\left| IV_{s,\beta} \right|_{\scriptscriptstyle \mathcal{B}^{\gamma + (s - \beta)_- - \hat{\epsilon}}_{1,1}} \leq C |\phi|_{\scriptscriptstyle \mathcal{B}^s_{1,1}}.
$$
\end{proof}

\begin{proof}[Proof of Leibniz Rule I (Theorem \ref{leibnitz})]   Let $\psi = z_0 1_I + z_P \phi_P^s$. 
Note that 
\begin{equation} \label{zeroterm} D^\beta (z_0 1_I \cdot h)- D^\beta (z_0 1_I )\cdot h= z_0 D^\beta h \in   \mathcal{B}^{\gamma-\beta}_{\infty,\infty}\subset \mathcal{B}^{\gamma-\beta -\epsilon}_{1,1}\end{equation} 
for every $\epsilon > 0$.

By Theorem \ref{fbeta}, we have for every block $\omega_P$
$$
\omega_P 1_P h = \omega_P M_P 1_P + \omega_P c_P \phi_P^{\gamma+1} + \omega_P c_{J_+} \phi_{J_+}^{\gamma+1},
$$
where $M_P = \frac{1}{|P|} \int_P h \, dm$ and $|c_J| \leq |h|_{\scriptscriptstyle \mathcal{B}^\gamma_{\infty,\infty}}$.
Then
\begin{align}\label{eqws} 
&z_P D^\beta(\phi_P^s) \cdot h
 = z_P M_P \phi_P^{s-\beta}
+ z_P c_P |P|^{s - \beta + \gamma + 1} 1_{P_{\mathrm{ch}}}^{-2}
+ z_P \phi_P^{s-\beta} c_{J_+} \phi_{J_+}^{\gamma+1}   \\
& = z_P M_P \phi_P^{s-\beta}
+ \underbrace{1_P^{s-\beta+\gamma-1} O_P z_{\mathop{\mathrm{par}}(P)} |h|_{\scriptscriptstyle \mathcal{B}^\gamma_{\infty,\infty}}}_{\scriptscriptstyle I_{s,\beta}}
+ \underbrace{z_P O_P |P|^{s-\beta-1} O_{J_+} |h|_{\scriptscriptstyle \mathcal{B}^\gamma_{\infty,\infty}} \phi_{J_+}^{\gamma+1}}_{\scriptscriptstyle II_{s,\beta}}.  \nonumber 
\end{align}
We estimate each term: \change{$s - \beta + \gamma> 0$}
\begin{align*}
\left| z_P M_P \phi_P^{s-\beta} \right|_{\scriptscriptstyle \mathcal{B}^{s-\beta}_{1,1}}
&= \left| z_P \phi_P^{s-\beta} O_P |h|_{\scriptscriptstyle \mathcal{B}^\gamma_{\infty,\infty}} \right|_{\scriptscriptstyle \mathcal{B}^{s-\beta}_{1,1}} 
\leq C |\phi|_{\scriptscriptstyle \mathcal{B}^s_{1,1}} |h|_{\scriptscriptstyle \mathcal{B}^\gamma_{\infty,\infty}}, \\
\left| 1_P^{s - \beta + \gamma - 1} O_P z_{\mathop{\mathrm{par}}(P)} \right|_{\scriptscriptstyle \mathcal{B}^{s - \beta + \gamma}_{1,1}} 
&\leq C |\phi|_{\scriptscriptstyle \mathcal{B}^s_{1,1}}, \\
\left| z_P O_P |P|^{s - \beta - 1} O_{J_+} \phi_{J_+}^{\gamma + 1} \right|_{\scriptscriptstyle \mathcal{B}^{\gamma + (s - \beta)_- - \hat{\epsilon}}_{1,1}} 
&\leq |z_P| O_P |P|^{(s - \beta)_+ + \hat{\epsilon}} \leq C |\phi|_{\scriptscriptstyle \mathcal{B}^s_{1,1}}.
\end{align*}
On the other hand, we also compute
\begin{align*}
z_P \phi_P^s \cdot h 
&= z_P M_P \phi_P^s 
+ z_P c_P |P|^{s + \gamma + 1} 1_{P_{\mathrm{ch}}}^{-2}
+ z_P |P|^{s - 1} c_{J_+} O_{J_+} \phi_{J_+}^{1 + \gamma},
\end{align*}
and therefore,
\begin{align}\label{eqws2}
&z_P D^\beta(|P|^s \phi_P \cdot h) \notag\\
&= z_P M_P \phi_P^{s - \beta} 
+ \underbrace{z_P c_P |P|^{s + \gamma + 1} D^\beta 1_{P_{\mathrm{ch}}}^{-2}}_{\scriptscriptstyle III_{s,\beta}}
+ \underbrace{z_P |P|^{s - 1} c_{J_+} O_{J_+} \phi_{J_+}^{1 + \gamma - \beta}}_{\scriptscriptstyle IV_{s,\beta}}.
\end{align}
By Proposition \ref{deriapp}, since $\gamma + s > 0$ we have \change{$\gamma +s > 0$} 
\begin{align}
\left| III_{s,\beta} \right|_{\scriptscriptstyle \mathcal{B}^{\gamma + s - \beta}_{1,1}} 
&\leq |z_P| |P|^{s + \gamma + 1} \left| D^\beta 1_{P_{\mathrm{ch}}}^{-2} \right|_{\scriptscriptstyle \mathcal{B}^{\gamma + s - \beta}_{1,1}} |h|_{\scriptscriptstyle \mathcal{B}^\gamma_{\infty,\infty}} \notag \\
&\leq |z_P| |P|^{s + \gamma + 1} |P_{\mathrm{ch}}|^{-\gamma - s - 1} |h|_{\scriptscriptstyle \mathcal{B}^\gamma_{\infty,\infty}} 
\leq C |\phi|_{\scriptscriptstyle \mathcal{B}^s_{1,1}} |h|_{\scriptscriptstyle \mathcal{B}^\gamma_{\infty,\infty}}. \notag
\end{align}
Finally,
\begin{align*}
\left| IV_{s,\beta} \right|_{\scriptscriptstyle \mathcal{B}^{\gamma  + s_-  - \beta - \tilde{\epsilon}}_{1,1}} &\leq \left| z_P |P|^{s - 1} O_{J_+} \phi_{J_+}^{1 + \gamma - \beta} \right|_{\scriptscriptstyle \mathcal{B}^{\gamma +s_- -\beta - \tilde{\epsilon}}_{1,1}} \\
&= \left| z_P |P|^{s - 1} O_{J_+} |J_+|^{1 -s_- +  \tilde{\epsilon}} \phi_{J_+}^{\gamma + s_- - \beta -  \tilde{\epsilon}} \right|_{\scriptscriptstyle \mathcal{B}^{\gamma  +s_- - \beta -  \tilde{\epsilon}}_{1,1}} \notag \\
&\leq |z_P| |P|^{s_+ +  \tilde{\epsilon}} \leq C |\phi|_{\scriptscriptstyle \mathcal{B}^s_{1,1}}.
\end{align*}
We thus define the remainder as
$$
R_L(\psi) = z_0 D^\beta h -I_{s,\beta} - II_{s,\beta} + III_{s,\beta} + IV_{s,\beta}.
$$
\end{proof}

\begin{proof}[Proof of Leibniz Rule II (Theorem \ref{leibnitz1})]
Every function $\psi \in \mathcal{B}_{\infty,\infty}^s$ can be written as
$$
\psi = z_0 1_I + z_P \phi_P^{s+1},
$$
where
$$
|\psi|_{\scriptscriptstyle \mathcal{B}^s_{\infty,\infty}} = |z_0| + \sup_k \sup_{P \in \mathcal{P}^k} |z_P|.
$$
we deal with with the term $D^\beta (z_01_I\cdot h)$ as in (\ref{zeroterm}). To the remainder terms, apply the same decomposition as in equation (\ref{eqws}), replacing $s$ by $s+1$ in the expression for $(D^\beta |P|^{s+1} \phi_P) \cdot h$. Then, the first term satisfies
\begin{align*}
\left| I_{s+1,\beta} \right|_{\scriptscriptstyle \mathcal{B}^{s - \beta + \gamma}_{\infty,\infty}} 
= \left| 1_P^{s - \beta + \gamma} O_P |\phi|_{\scriptscriptstyle \mathcal{B}^s_{\infty,\infty}} |h|_{\scriptscriptstyle \mathcal{B}^\gamma_{\infty,\infty}} \right|_{\scriptscriptstyle \mathcal{B}^{s - \beta + \gamma}_{\infty,\infty}} 
\leq C |\phi|_{\scriptscriptstyle \mathcal{B}^s_{\infty,\infty}} |h|_{\scriptscriptstyle \mathcal{B}^\gamma_{\infty,\infty}}.
\end{align*}

For the second term,
\begin{align*}
\left| II_{s+1,\beta} \right|_{\scriptscriptstyle \mathcal{B}^{(s - \beta)_- - \epsilon + \gamma}_{\infty,\infty}} 
&= \left| \phi_J^{\gamma + 1} O_{P_-} |P_-|^{s - \beta} |\phi|_{\scriptscriptstyle \mathcal{B}^s_{\infty,\infty}} |h|_{\scriptscriptstyle \mathcal{B}^\gamma_{\infty,\infty}} \right|_{\scriptscriptstyle \mathcal{B}^{(s - \beta)_- + \gamma - \epsilon}_{\infty,\infty}} \notag \\
&= \left| \phi_J^{\gamma + (s - \beta)_- - \epsilon + 1} O_J |\phi|_{\scriptscriptstyle \mathcal{B}^s_{\infty,\infty}} |h|_{\scriptscriptstyle \mathcal{B}^\gamma_{\infty,\infty}} \right|_{\scriptscriptstyle \mathcal{B}^{(s - \beta)_- + \gamma - \epsilon}_{\infty,\infty}} \notag \\
&\leq C |\phi|_{\scriptscriptstyle \mathcal{B}^s_{\infty,\infty}} |h|_{\scriptscriptstyle \mathcal{B}^\gamma_{\infty,\infty}}.
\end{align*}
We now consider the   same decomposition as  in equation (\ref{eqws2}), replacing $s$ by $s+1$:
\begin{align*}
III_{s+1,\beta}
&= O_P |P|^{s + \gamma + 2} |1_{P_{\mathrm{ch}}}|^{-1} O_{W_-} \phi_{W_-}^{-\beta} |\phi|_{\scriptscriptstyle \mathcal{B}^s_{\infty,\infty}} |h|_{\scriptscriptstyle \mathcal{B}^\gamma_{\infty,\infty}} \notag \\
&= O_P |P|^{s + \gamma + 1} O_{W_-} \phi_{W_-}^{-\beta} |\phi|_{\scriptscriptstyle \mathcal{B}^s_{\infty,\infty}} |h|_{\scriptscriptstyle \mathcal{B}^\gamma_{\infty,\infty}} \notag \\
&= O_W \phi_W^{-\beta} O_{P_+} |P_+|^{s + \gamma + 1} |\phi|_{\scriptscriptstyle \mathcal{B}^s_{\infty,\infty}} |h|_{\scriptscriptstyle \mathcal{B}^\gamma_{\infty,\infty}} \notag \\
&= O_W \phi_W^{s - \beta + \gamma + 1} |\phi|_{\scriptscriptstyle \mathcal{B}^s_{\infty,\infty}} |h|_{\scriptscriptstyle \mathcal{B}^\gamma_{\infty,\infty}}.
\end{align*}
Hence,
\begin{align*}
\left| III_{s+1,\beta} \right|_{\scriptscriptstyle \mathcal{B}^{s - \beta + \gamma}_{\infty,\infty}} 
\leq C |\phi|_{\scriptscriptstyle \mathcal{B}^s_{\infty,\infty}} |h|_{\scriptscriptstyle \mathcal{B}^\gamma_{\infty,\infty}}.
\end{align*}
Finally, we estimate the last term: 
\begin{align*}
\left| IV_{s+1,\beta} \right|_{\scriptscriptstyle \mathcal{B}^{s_-+ \gamma - \beta -\overline{\epsilon} }_{\infty,\infty}} 
&= \left| z_P |P|^s c_{J_+}  O_{J_+} \phi_{J_+}^{1 + \gamma - \beta} \right|_{\scriptscriptstyle \mathcal{B}^{s_-+ \gamma - \beta -\overline{\epsilon} }_{\infty,\infty}}   \\
&= \left| c_J O_J \phi_J^{1 + \gamma - \beta} z_{P_-} |P_-|^s \right|_{\scriptscriptstyle \mathcal{B}^{s_-+ \gamma - \beta -\overline{\epsilon} }_{\infty,\infty}} \\
&= \left|   O_J \phi_J^{1 + \gamma - \beta} O_{P_-}  |P_-|^s |h|_{\scriptscriptstyle \mathcal{B}^\gamma_{\infty,\infty}} |\phi|_{\scriptscriptstyle \mathcal{B}^s_{\infty,\infty}} \right|_{\scriptscriptstyle \mathcal{B}^{s_-+ \gamma - \beta -\overline{\epsilon} }_{\infty,\infty}} \\
&= \left|   O_J \phi_J^{1 + s_-+ \gamma - \beta -\overline{\epsilon}  } O_{P_-} |\log  |J||^{s_0} |J|^{\overline{\epsilon}  } |h|_{\scriptscriptstyle \mathcal{B}^\gamma_{\infty,\infty}} |\phi|_{\scriptscriptstyle \mathcal{B}^s_{\infty,\infty}} \right|_{\scriptscriptstyle \mathcal{B}^{s_-+ \gamma - \beta -\overline{\epsilon} }_{\infty,\infty}} \\
&\leq C |h|_{\scriptscriptstyle \mathcal{B}^\gamma_{\infty,\infty}} |\phi|_{\scriptscriptstyle \mathcal{B}^s_{\infty,\infty}}.
\end{align*}

\end{proof}

\subsection{Connection with the Fractional Laplacian}

The fractional Laplacian (see Di Nezza, Palatucci, and Valdinoci \cite{nezza}) is a fundamental example of a non-local pseudo-differential operator in classical analysis. It arises in various contexts, including probability theory, partial differential equations, and harmonic analysis. There are several equivalent definitions of the fractional Laplacian, depending on the functional setting and the level of regularity of the function.

For rapidly decaying smooth functions $u \in C^\infty(\mathbb{R}^n)$ and for any $s > 0$, the fractional Laplacian of order $s$ can be defined via the singular integral representation:
$$
(-\Delta)^s u(x) = C_{n,s} \, \mathrm{P.V.} \int_{\mathbb{R}^n} \frac{u(x) - u(y)}{|x - y|^{n + 2s}} \, dy,
$$
where $C_{n,s}$ is an explicit normalization constant depending on the dimension $n$ and the order $s$, and $\mathrm{P.V.}$ denotes the Cauchy principal value.

The formula above highlights the non-local nature of $(-\Delta)^s$, as the value of the operator at $x$ depends on the values of $u$ at all other points $y \in \mathbb{R}^n$.

In our setting, the operator $D^\beta$ introduced earlier may be viewed as a generalization of the fractional Laplacian, adapted to the particular geometric or functional framework under consideration.

\begin{proposition}  Let $\beta > 0$. There is $\Cll{rt}>1$ such that for every $
Q \in \mathcal{P}$ there is  $\Cll{yy}(Q)$ such that $  \Crr{yy}(Q)\in (1,\Crr{rt})$ and 
\begin{equation} \Crr{yy}(Q) (D^\beta \phi_Q)(x)= \int \frac{\phi_Q(x)-\phi_Q(y)}{d(x,y)^{1+\beta}} \ dm(y).\end{equation}
\end{proposition}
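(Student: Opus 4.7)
The plan is to compute the singular integral on the right-hand side explicitly by stratifying the variable $y$ according to the smallest element of $\bigcup_k \mathcal{P}^k$ containing both $x$ and $y$, which by definition of the metric is $d(x,y)$. Fix $Q\in \mathcal{P}$ and $x\in Q$; by the symmetry between the two children I may assume $x\in Q_1^Q$, so that $\phi_Q(x)=1/|Q_1^Q|$ and $(D^\beta \phi_Q)(x)=|Q|^{-\beta}/|Q_1^Q|$. I will decompose the domain of integration into three disjoint pieces: (i) $y\in Q_1^Q$, where the integrand vanishes since $\phi_Q(y)=\phi_Q(x)$; (ii) $y\in Q_2^Q$, where $d(x,y)=|Q|$ (because any strictly smaller element of the grid is contained in a single child of $Q$); and (iii) $y\in I\setminus Q$, where $\phi_Q(y)=0$.

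Piece (ii) is a direct computation:
\begin{equation*}
\int_{Q_2^Q}\frac{\phi_Q(x)-\phi_Q(y)}{|Q|^{1+\beta}}\,dm(y)=\frac{|Q_1^Q|+|Q_2^Q|}{|Q_1^Q|\,|Q|^{1+\beta}}=\frac{1}{|Q_1^Q|\,|Q|^\beta}=(D^\beta\phi_Q)(x).
\end{equation*}
For piece (iii) I further stratify by the proper ancestor $R\supsetneq Q$ satisfying $d(x,y)=|R|$. Denoting by $R'$ the unique child of $R$ containing $Q$ (and hence containing $x$), the key combinatorial observation is that the set of $y$'s producing the ancestor $R$ is exactly $R\setminus R'$: any $y\in R\setminus R'$ shares $R$ as the smallest common ancestor with $x$, and conversely any smaller common ancestor would have to lie inside $R'$. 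Since $\phi_Q(y)=0$ throughout this region, the contribution is
\begin{equation*}
\frac{1}{|Q_1^Q|}\sum_{R\supsetneq Q}\frac{|R|-|R'|}{|R|^{1+\beta}}=(D^\beta\phi_Q)(x)\cdot\sum_{R\supsetneq Q}\frac{1-|R'|/|R|}{(|R|/|Q|)^\beta}.
\end{equation*}
Writing the sum as $S(Q)$, the identity will hold with constant $C(Q):=1+S(Q)$, and this constant is manifestly independent of $x\in Q_1^Q$ because $R'$ depends only on $R$ and $Q$.

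The remaining task is the uniform upper bound $C(Q)<\Crr{rt}$. Bounded distortion of the Markov partition yields $|R'|/|R|\leq 1-\theta$ for some $\theta\in(0,1)$ independent of $R$, so each numerator $1-|R'|/|R|$ lies in $[\theta,1]$. The expansion property provides $\mu>1$ with $|R|/|R'|\geq \mu$ for every parent–child pair, so that if $R$ is the $k$-th proper ancestor of $Q$ then $|R|/|Q|\geq \mu^k$; since $\beta>0$, the series $S(Q)$ is dominated by $\sum_{k\geq 1}\mu^{-k\beta}<\infty$, which furnishes the required $\Crr{rt}$. The strict lower bound $C(Q)>1$ is immediate for $Q\neq I$ since every summand in $S(Q)$ is strictly positive. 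Finally, the case $x\in Q_2^Q$ is handled by exactly the same stratification, with all signs propagating consistently, yielding the \emph{same} $C(Q)$; hence the identity holds pointwise on $Q$ with a single constant. The only mildly delicate point—and the one I expect to be the main obstacle to stating cleanly—is the combinatorial verification in piece (iii) that $C(Q)$ is truly independent of $x$, which hinges on identifying ``the child of $R$ containing $x$'' with ``the child of $R$ containing $Q$''. Everything else reduces to bounded distortion and the convergence of a geometric series.
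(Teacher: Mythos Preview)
Your proposal is correct and follows essentially the same route as the paper: both stratify the integral by the smallest grid element containing $x$ and $y$, identify the term at level $Q$ as exactly $(D^\beta\phi_Q)(x)$, and collect the contributions from proper ancestors of $Q$ into the multiplicative constant. The paper writes this compactly by factoring out $\phi_Q(x)$ from the start (which handles both children at once) and indexes the ancestors as $P^n(Q)$ for $n<n_0$, whereas you split into the three pieces (i)--(iii) and verify the $Q_1^Q/Q_2^Q$ symmetry by hand; you also spell out the geometric bound on $S(Q)$, which the paper leaves implicit.
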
 
\begin{proof} Let $P^n(x)$ such that $x\in P^n(x)\in \mathcal{P}^n$. Suppose that $Q\in \mathcal{P}^{n_0}$.  
Then 
\begin{align*}  \int \frac{\phi_Q(x)-\phi_Q(y)}{d(x,y)^{1+\beta}} \ dm(y)  &= \sum_{n\leq n_0} \frac{1}{|P^n(x)|^{1+\beta}} \int_{P^n(x)\setminus P^{n+1}(x)} \phi_Q(x)-\phi_Q(y)  \ dm(y)\\
&=  \frac{\phi_Q(x)}{|Q|^\beta}+  \sum_{\substack{n<  n_0\\ Q\subset P^{n+1}(x) }} \phi_Q(x) \frac{|P^n(x)| -|P^{n+1}(x)|}{|P^n(x)|^{1+\beta}}\\
&=  \frac{\phi_Q(x)}{|Q|^\beta}\Big( 1 +   \sum_{ n<  n_0 } \frac{|Q|^\beta}{|P^n(Q)|^\beta} \big( 1- \frac{|P^{n+1}(Q)|}{|P^n(Q)|} \big) \Big)\\
&= \frac{\phi_Q(x)}{|Q|^\beta} \Crr{yy}(Q).
\end{align*}
\end{proof}

\subsection{Holomorphy and Schauder bases}  
The following is a simple and useful criterion for holomorphy in Banach spaces with a Schauder basis.

\begin{proposition}
Let $X$ be a Banach space with a Schauder basis $(e_n)_{n\ge 1}$, and let
$f_n:D\to\mathbb{C}$ be holomorphic functions on a disc $D\subset\mathbb{C}$.
Assume that, for every $\lambda\in D$, the series
\[
F(\lambda)\;=\;\sum_{n=1}^{\infty} f_n(\lambda)\,e_n
\]
converges in $X$ and that $F$ is locally bounded on $D$ (i.e. bounded on every compact subset of $D$).
Then $F:D\to X$ is holomorphic (in the Banach-space sense). 
 
\end{proposition}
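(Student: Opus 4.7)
The strategy is to reduce holomorphy to weak holomorphy, and then to establish weak holomorphy via truncation by Schauder partial sums. Let $(e_n^\ast)_{n\ge1}\subset X^\ast$ be the coordinate functionals associated to the Schauder basis, and for each $N\ge 1$ let $P_N\colon X\to X$ be the partial-sum projection $P_Nx=\sum_{n\le N} e_n^\ast(x)\,e_n$. By the Banach--Steinhaus theorem (or as part of the definition of a Schauder basis) the basis constant $K=\sup_N\|P_N\|$ is finite. Note that for every $\lambda\in D$ we have $f_n(\lambda)=e_n^\ast(F(\lambda))$, so
\[
(P_N\circ F)(\lambda)=\sum_{n\le N}f_n(\lambda)\,e_n.
\]

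First I would observe that each $P_N\circ F\colon D\to X$ is holomorphic in the Banach-space sense, since it is a finite sum of holomorphic scalar functions times fixed vectors in $X$. Next, fix an arbitrary $\ell\in X^\ast$. Then
\[
\ell\bigl((P_N\circ F)(\lambda)\bigr)=\sum_{n\le N}\ell(e_n)\,f_n(\lambda)
\]
is a scalar holomorphic function of $\lambda$. Since $P_NF(\lambda)\to F(\lambda)$ in $X$ for every $\lambda\in D$, continuity of $\ell$ gives $\ell(P_NF(\lambda))\to\ell(F(\lambda))$ pointwise on $D$. Moreover, from $\|P_N\|\le K$ and the local boundedness of $F$, on any compact set $L\subset D$ we have
\[
\bigl|\ell\bigl((P_N\circ F)(\lambda)\bigr)\bigr|\le \|\ell\|\cdot K\cdot \sup_{\lambda\in L}\|F(\lambda)\|,
\]
so the family $\{\ell\circ P_N\circ F\}_N$ is locally uniformly bounded on $D$.

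By Vitali's (equivalently, Montel's) theorem for scalar holomorphic functions, a locally uniformly bounded sequence of holomorphic functions on $D$ that converges pointwise converges locally uniformly to a holomorphic function. Hence $\ell\circ F$ is holomorphic on $D$ for every $\ell\in X^\ast$, i.e.\ $F$ is weakly holomorphic. Combined with the hypothesis that $F$ is locally bounded, Dunford's theorem (a locally bounded, weakly holomorphic map into a Banach space is strongly holomorphic) yields that $F\colon D\to X$ is holomorphic.

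\textbf{Main obstacle.} There is no deep obstruction; the only subtlety is producing, for each fixed $\lambda$, a holomorphic approximation $P_N\circ F$ of $F$ to which Vitali's theorem can be applied after composing with $\ell$. The uniform bound $\sup_N\|P_N\|<\infty$ provided by the Schauder basis is precisely what makes the local-boundedness hypothesis on $F$ transfer uniformly to the approximants, which is the key ingredient; without it, pointwise convergence of the truncations would not be enough to invoke Vitali.
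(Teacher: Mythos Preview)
Your proposal is correct and follows essentially the same route as the paper: use the uniformly bounded Schauder partial-sum projections $P_N$ to produce holomorphic truncations $P_N\circ F$, compose with an arbitrary $\ell\in X^\ast$, apply Vitali--Montel to the resulting locally bounded, pointwise convergent family of scalar holomorphic functions to get weak holomorphy of $F$, and then invoke the weak--strong holomorphy criterion (what you call Dunford's theorem) together with local boundedness to conclude. The paper's argument is identical in structure and in the key ingredients.
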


\begin{proof}
Let $P_N:X\to X$ denote the $N$-th partial-sum projection associated with the basis,
\[
P_N\Big(\sum_{n=1}^{\infty} a_n e_n\Big)\;=\;\sum_{n=1}^{N} a_n e_n .
\]
For a Schauder basis, these projections are uniformly bounded:
\[
K\;:=\;\sup_{N\ge 1}\,\|P_N\|\;<\;\infty .
\]
(See \cite[\S1.3]{AlbiacKalton2016} )

Define $S_N(\lambda):=P_N(F(\lambda))=\sum_{n=1}^{N} f_n(\lambda)e_n$.
Each $S_N$ is holomorphic on $D$ (finite sum of holomorphic functions), and for every compact $K\Subset D$ we have
\[
\sup_{\lambda\in K}\|S_N(\lambda)\|
\;\le\; \|P_N\|\ \sup_{\lambda\in K}\|F(\lambda)\|
\;\le\; K\,\sup_{\lambda\in K}\|F(\lambda)\|
\qquad \text{(uniformly in $N$)} .
\]
Fix $\varphi\in X^*$. Then $\varphi\circ S_N$ are holomorphic scalar functions, uniformly bounded on every compact $K\Subset D$, and
$\varphi\circ S_N(\lambda)\to \varphi\circ F(\lambda)$ pointwise (because $S_N(\lambda)\to F(\lambda)$ in $X$).
By Vitali–Montel, $\varphi\circ S_N \to \varphi\circ F$ \emph{uniformly on compact subsets} of $D$, hence $\varphi\circ F$ is holomorphic on $D$.

Since $\varphi\circ F$ is holomorphic for every $\varphi\in X^*$ and $F$ is locally bounded, the
\emph{weak–strong holomorphy criterion} in Banach spaces implies that $F$ is holomorphic
(see \cite[Thm.\ 2.15]{Mujica1986} )

\end{proof}

\bibliographystyle{abbrvurl}
\bibliography{bibliografiab}

\end{document}